\newcommand*{\mailto}[1]{\href{mailto:#1}{\nolinkurl{#1}}}
\newcommand{\arxiv}[1]{\href{http://arxiv.org/abs/#1}{arXiv:#1}}
\def\theequation{\@arabic\c@equation}
\newcommand{\diag}{\operatorname{diag}}
\newcommand{\bbN}{{\mathbb{N}}}
\newcommand{\bbR}{{\mathbb{R}}}
\newcommand{\bbZ}{{\mathbb{Z}}}
\newcommand{\bbC}{{\mathbb{C}}}
\newcommand{\cB}{{\mathcal B}}
\newcommand{\cD}{{\mathcal D}}
\newcommand{\cF}{{\mathcal F}}
\newcommand{\cH}{{\mathcal H}}
\newcommand{\cK}{{\mathcal K}}
\newcommand{\cM}{{\mathcal M}}
\newcommand{\cN}{{\mathcal N}}
\newcommand{\dott}{\,\cdot\,}
\newcommand{\no}{\nonumber}
\newcommand{\lb}{\label}
\newcommand{\f}{\frac}
\newcommand{\ol}{\overline}
\newcommand{\wti}{\widetilde}
\newcommand{\Oh}{O}
\newcommand{\tr}{\text{\rm{tr}}}
\newcommand{\ran}{\text{\rm{ran}}}
\newcommand{\ind}{\text{\rm{ind}}}
\newcommand{\dom}{\text{\rm{dom}}}
\newcommand{\bi}{\bibitem}
\newcommand{\sgn}{\text{\rm{sign}}}
\newcommand{\hatt}{\widehat}
\renewcommand{\ln}{\text{\rm ln}}
\renewcommand{\diag}{\text{\rm diag}}
\newcommand{\linspan}{\operatorname{lin.span}} 
\renewcommand{\dot}{\overset{\textbf{\Large.}}}
\numberwithin{equation}{section}
\newtheorem{theorem}{Theorem}[section]
\newtheorem{lemma}[theorem]{Lemma}
\newtheorem{corollary}[theorem]{Corollary}
\newtheorem{proposition}[theorem]{Proposition}
\newtheorem{hypothesis}[theorem]{Hypothesis}
\newtheorem{example}[theorem]{Example}
\theoremstyle{definition}
\newtheorem{definition}[theorem]{Definition}
\newtheorem{remark}[theorem]{Remark}
\begin{document}

\title[The Generalized Birman--Schwinger Principle]{The Generalized Birman--Schwinger Principle}

\author[J.\ Behrndt]{Jussi Behrndt}  
\address{Institut f\"ur Angewandte Mathematik, Technische Universit\"at 
Graz, Steyrergasse 30, 8010 Graz, Austria
}  
\address{Department of Mathematics, Stanford University, 450 Jane Stanford Way, Stanford CA 94305-2125, USA}
\email{\mailto{behrndt@tugraz.at}, \mailto{jbehrndt@stanford.edu}}
\urladdr{\url{http://www.math.tugraz.at/~behrndt/}}

\author[A. ter Elst]{A.\ F.\ M.\ ter Elst}
\address{Department of Mathematics, University of Auckland, 
Private Bag 92019, Auckland 1142, New Zealand}
\email{\mailto{terelst@math.auckland.ac.nz}}
\urladdr{\href{https://www.math.auckland.ac.nz/people/ater013}{https://www.math.auckland.ac.nz/people/ater013}}

\author[F.\ Gesztesy]{Fritz Gesztesy}
\address{Department of Mathematics, 
Baylor University, One Bear Place \#97328,
Waco, TX 76798-7328, USA}
\email{\mailto{Fritz\_Gesztesy@baylor.edu}}
\urladdr{\url{http://www.baylor.edu/math/index.php?id=935340}}


\date{\today}
\subjclass[2010]{Primary: 47A53, 47A56. Secondary: 47A10, 47B07.}
\keywords{Birman--Schwinger principle, Jordan chains, algebraic and geometric multiplicities, the index of meromorphic operator-valued functions, the Weinstein--Aronszajn formula.}

\begin{abstract}
We prove a generalized Birman--Schwinger principle in the non-self-adjoint context. In particular, we provide a 
detailed discussion of geometric and algebraic multiplicities of eigenvalues of the basic operator of interest (e.g., a Schr\"odinger operator) 
and the associated Birman--Schwinger operator, and additionally offer a careful study of the associated Jordan chains of generalized eigenvectors of both operators.
In the course of our analysis we also study algebraic and geometric multiplicities of zeros of strongly analytic operator-valued 
functions and the associated Jordan chains of generalized eigenvectors. We also relate algebraic multiplicities to the notion of the index of analytic 
operator-valued functions and derive a general Weinstein--Aronszajn formula for a pair of non-self-adjoint operators. 
\end{abstract}

\begingroup
\makeatletter
\patchcmd{\@setauthors}{\MakeUppercase}{\sc}{}{}
\makeatother
\maketitle
\endgroup


{\scriptsize{\tableofcontents}}

\section{Introduction} \lb{s1} 

The Birman--Schwinger principle 
is one of the standard tools in spectral analysis of Schr\"{o}dinger operators, originating in work of Birman \cite{B66} and Schwinger \cite{Sc61}, and raised to an art by mathematical physicists in subsequent decades. In its original form, this useful technique permits one to reduce the eigenvalue problem 
for an unbounded differential operator (e.g., the Schr\"{o}dinger operator $H=-\Delta +V$ in 
$L^2(\bbR^n; d^nx)$) to an eigenvalue problem for a bounded integral operator involving a sandwiched resolvent of the unperturbed operator 
(e.g., $H_0=-\Delta$ in the case of Schr\"odinger operators), where the underlying integral kernel and its mapping properties and asymptotics 
are well-studied. Roughly speaking, in the standard Schr\"odinger operator situation, $z_0\in\bbC\backslash [0,\infty)$ is an eigenvalue 
of $H=-\Delta +V$ in $L^2(\bbR^n; d^nx)$
if and only if $-1$ is an eigenvalue of the Birman--Schwinger
operator $V_1(H_0-z_0 I_{L^2(\bbR^n; d^nx)})^{-1}V_2^*$, employing a convenient factorization $V=V_2^*V_1$.
This correspondence is very useful in various spectral problems; typical examples are explicit bounds on the number of discrete eigenvalues in essential spectral gaps, in the proof of the Lieb--Thirring inequality \cite{LT76}, the proof of the Cwikel--Lieb-Rozenblum bound, etc., see, for instance \cite{GMGT76}, \cite{LT76}, \cite[Ch.~III]{Si71}, 
\cite{Si76} (all in the book \cite{LSW76}), \cite[Sect.~XIII.3]{RS78}, to mention just a few sources. 

We also refer to a variety of other literature on eigenvalue estimates and spectral problems that in one way or another are based on the Birman--Schwinger principle \cite{BS91,GH87,Ho70,Kl82,KS80,KK66,Ne83,Ra80,Se74,Si85,Si77a,RS78}, but cannot make 
any attempt to be complete in this context as the existing literature is of an overwhelming nature. While most of these sources focus on self-adjoint situations, eventually, the Birman--Schwinger technique was extended to non-self-adjoint situations in the context of complex resonances in \cite{AAD01,Si89}, and systematically in \cite{GLMZ05} (and later again in \cite{BGHN16}, \cite{BGHN17},  \cite{GHN15}), adapting factorization techniques developed in Kato \cite{Ka66}, Konno and Kuroda \cite{KK66}, and Howland \cite{Ho70}, primarily in the self-adjoint context. The past 15 years saw enormous interest in various aspects of spectral theory associated with non-self-adjoint problems and we refer, for instance, to \cite{BMNW08,Da07,DH15,DHK09,DHK13,FKV18,Fr11,Fr18,FLLS06,FLS16,FLS11,FS17,Ha11,LS09,Sa10,Sa10a}, again, just a tiny selection of the existing literature without hope of any kind of completeness, and to 
\cite[Sect.~III.9]{BC19}, \cite{LS10} for applications to spectral stability of nonlinear systems. The absence of any eigenvalues (and continuity of the spectrum for $n=3$) under a general smallness condition on the complex-valued potential $V$ in Schr\"odinger operators $H=-\Delta +V$ defined as form sums in $L^2(\bbR^n; d^nx)$, $n \geq 3$, has been proven in \cite{FKV18}. In particular, their treatment for the physically relevant situation $n=3$ permits potentials $V$ with local singularities exceeding those in the Rollnik class; the authors in \cite{FKV18} also discuss magnetic vector potentials. 

The principal purpose of this paper is to prove a generalized Birman--Schwinger principle in the non-self-adjoint context that not only 
focuses on a detailed discussion of geometric and algebraic multiplicities of eigenvalues of the operator of interest 
(e.g., a Schr\"odinger operator) and the associated Birman--Schwinger operator, but also a detailed discussion of the associated eigenvectors 
and the corresponding Jordan chains of generalized eigenvectors. 
In Section 2 we first recall the notions of algebraic and geometric eigenvalue multiplicities and the corresponding Jordan chains of generalized eigenvectors 
of a single Hilbert space operator.  
As a warm up we treat an exactly solvable example of a non-self-adjoint, one-dimensional, 
periodic Schr\"odinger operator, exhibiting algebraic multiplicities of eigenvalues strictly larger than geometric ones in Proposition~\ref{p2.2}.
Afterwards, in Section~\ref{s3} we
then turn to one of our principal topics, algebraic and geometric multiplicities and corresponding Jordan chains of 
generalized eigenvectors associated with zeros of strongly analytic operator-valued functions. This leads directly to 
Section~\ref{s4}, where our main result is formulated. More precisely, in Theorem~\ref{t4.5} we show that if $H_0$ is a closed operator in some Hilbert space $\cH$ and
$V=V_2^*V_1$ is an additive perturbation of $H_0$, then the vectors 
$\{f_0,\dots,f_{k-1}\}$ form a Jordan chain for the perturbed operator $H=H_0+V_2^*V_1$ if and only if the vectors $\{\varphi_0,\dots,\varphi_{k-1}\}$
form a Jordan chain for the operator-valued function (an abstract Birman--Schwinger-type operator family)
\begin{equation}
 \rho(H_0)\ni z\mapsto I_\cK+ V_1(H_0-zI_\cH)^{-1}V_2^*;
\end{equation}
here $\cK$ is some auxiliary Hilbert space and $V_1,V_2$ are (possible unbounded) operators mapping from $\cH$ to $\cK$ that satisfy some additional mild technical 
conditions (cf.\ Hypothesis~\ref{h4.1}). In the context of non-self-adjoint second-order 
elliptic partial differential operators and Dirichlet-to-Neumann maps a similar correspondence was 
found recently in \cite{BE19}; see also \cite[Sect.~7.4.4]{DM17} for a related result in the abstract setting of extension theory of symmetric operators.
The notion of the index of meromorphic operator-valued functions is briefly recalled in Section~\ref{s5} and an application to algebraic multiplicities of meromorphic operator-valued functions is provided in Theorem~\ref{t5.7} and Theorem~\ref{t5.9}. 
Our final Section~\ref{s6} centers around the notion of the essential 
spectrum of closed operators in a Hilbert space, and as a highlight derives a global version of the Weinstein--Aronszajn formula 
(relating algebraic multiplicities of a pair of operators) 
in the non-self-adjoint context in Theorem~\ref{t6.6} and Theorem~\ref{t6.8}.

Finally, we summarize the basic notation used in this paper: $\cH$ and $\cK$ denote separable complex 
Hilbert spaces with scalar products $(\,\cdot\,,\,\cdot\,)_{\cH}$ and $(\,\cdot\,,\,\cdot\,)_{\cK}$, linear in the 
first entry, respectively. 

The Banach spaces of bounded and compact linear operators in $\cH$ are denoted by $\cB(\cH)$ and
$\cB_\infty(\cH)$, respectively. Similarly, the Schatten--von Neumann
(trace) ideals will subsequently be denoted by $\cB_p(\cH)$,
$p \in [1,\infty)$, and the subspace of all finite-rank operators in $\cB_1(\cH)$ will be 
abbreviated by $\cF(\cH)$. Analogous notation $\cB(\cH,\cK)$,
$\cB_\infty (\cH,\cK)$, etc., will be used for bounded, compact, etc.,
operators between two Hilbert spaces $\cH$ and $\cK$. In addition,
$\tr_{\cH}(T)$ denotes the trace of a trace class operator $T\in\cB_1(\cH)$ and
$\det_{p,\cH}(I_{\cH}+S)$ represents the (modified) Fredholm determinant
associated with an operator $S\in\cB_p(\cH)$, $p\in\bbN$ (for $p=1$ we
omit the subscript $1$). Moreover, $\Phi(\cH)$ denotes the set of bounded Fredholm operators on 
$\cH$, that is, the set of operators $T \in \cB(\cH)$ such that $\dim(\ker(T)) < \infty$, $\ran(T)$ is 
closed in $\cH$, and $\dim(\ker(T^*)) < \infty$. The corresponding (Fredholm) index 
of $T \in \Phi(\cH)$ is then given by $\ind(T) = \dim(\ker(T)) - \dim(\ker(T^*))$. 
For a linear operator $T$ we denote by $\dom(T)$, $\ran(T)$ and $\ker(T)$ the domain, range, and kernel (i.e., 
nullspace), respectively. If $T$ is closable, the closure is denoted by $\overline T$. 
The spectrum, point spectrum,  and resolvent set of a closed operator $T$  will be 
denoted by $\sigma(T)$, $\sigma_p(T)$, and $\rho(T)$, respectively. 

The identity matrix in $\bbC^N$, $N \in \bbN$, is written as $I_N$, the corresponding nullmatrix will be 
abbreviated by $0_N$; by $D(z_0; r_0) \subset \bbC$ we denote the open disk with center $z_0$ and radius 
$r_0 > 0$, and by $\partial D(z_0; r_0)$ the corresponding circle; we also employ the notation 
$\bbN_0 = \bbN \cup \{0\}$.

\section{Eigenvalues and Jordan Chains for (Unbounded) Operators: A Warm Up and a  Non-Self-Adjoint Schr\"odinger Operator} \lb{s2}

To motivate the objects of interest in this paper a bit, we start by considering a (possibly unbounded) 
operator $A$ in a separable, complex Hilbert space 
$\cK$. Recall first that the vectors $\{\varphi_0,\ldots,\varphi_{k-1}\} \subset  \dom(A)$ form a 
{\it Jordan chain of length} $k$ for $A$ at the eigenvalue $\lambda_0\in\bbC$ if and only if $\varphi_0\not=0$ and
\begin{equation} \lb{2.1a}
 (A- \lambda_0 I_{\cK})\varphi_0=0 \, \text{ and } \, (A - \lambda_0 I_{\cK})\varphi_j = \varphi_{j-1},\quad j\in\{1,\ldots,k-1\}.
\end{equation}
In this case the vector $\varphi_0$ is an eigenvector corresponding to the eigenvalue 
$\lambda_0 \in \sigma_p(A)$ and the vectors $\varphi_1,\ldots,\varphi_{k-1}$
are usually called generalized eigenvectors corresponding to $\lambda_0$. It is clear from \eqref{2.1a} that all generalized eigenvectors are nontrivial.
If $\{\varphi_{0,n}\}_{1\leq n \leq N}$, $N \in \bbN \cup \{\infty\}$, is a basis in $\ker(A - \lambda_0 I_{\cK})$
and $\{\varphi_{0,n},\ldots,\varphi_{k_n-1,n}\} \subset  \dom(A)$, $1\leq n\leq N$, are the corresponding Jordan chains of maximal lengths $k_n \in \bbN$, then the 
{\it algebraic multiplicity} $m_a(\lambda_0; A )$ of the eigenvalue $\lambda_0$ is defined as
\begin{equation}
m_a(\lambda_0; A) = \sum_{n=1}^N k_n.   \lb{2.2a} 
\end{equation}
If one of the eigenvectors has a Jordan chain of arbitrarily long length, then we define 
$m_a(\lambda_0; A) = \infty$.

Next, the {\it geometric multiplicity} of an eigenvalue $\lambda_0 \in \sigma_p(A)$ of $A$, denoted by 
$m_g(\lambda_0; A)$, is given by 
\begin{equation}
m_g(\lambda_0; A) = \dim(\ker(A - \lambda_0 I_{\cK})).
\end{equation}  

In the following paragraph we assume that $A$ is a closed operator in $\cK$ (and hence $\ker(A - z I_{\cK})$ is closed in $\cK$, $z \in \bbC$): Suppose $\lambda_0\in\bbC$ is an isolated point in $\sigma(A)$ and introduce the {\it Riesz projection} $P(\lambda_0;A)$ of $A$ corresponding to $\lambda_0$ by
\begin{equation}
P(\lambda_0;A) = \f{-1}{2\pi i} \ointctrclockwise_{\partial D(\lambda_0; \varepsilon) }
d\zeta \, (A-\zeta I_{\cK})^{-1}, \lb{2.4}
\end{equation}
where $\partial D(\lambda_0; \varepsilon) $ is a counterclockwise oriented circle centered at $\lambda_0$ with sufficiently small radius $\varepsilon>0$ (excluding the rest of $\sigma(A)$). If the Riesz projection
is a finite-rank operator in $\cK$, then \\[1mm] 
$(i)$ $\lambda_0$ is an eigenvalue of $A$, \\[1mm]
and \\[1mm] 
$(ii)$ $\ran(P(\lambda_0;A))$ coincides with the algebraic eigenspace of $A$ at $\lambda_0$. In 
this case one obtains for the algebraic multiplicity $m_a(\lambda_0; A)$ of the eigenvalue $\lambda_0$ of $A$ 
\begin{equation} 
m_a(\lambda_0; A) = \dim(\ran(P(\lambda_0;A))) = {\tr}_{\cK}(P(\lambda_0;A))    \lb{2.5} 
\end{equation} 
(see, e.g., \cite[Sect.~XV.2]{GGK90}), \cite[Sect.~I.2]{GK69}, \cite[Sect.~III.6.5]{Ka80}), and 
\begin{equation} 
m_g(\lambda_0; A) \leq m_a(\lambda_0; A).    \lb{2.6a} 
\end{equation} 

Following a standard practice (particularly, in the special context of self-adjoint operators $A$ in $\cK$), we now introduce the {\it discrete spectrum} of $A$ in $\cK$ by
\begin{align} 
\begin{split} 
& \sigma_d(A) = \{\lambda \in \sigma_p(A) \,|\, \text{$\lambda$ is an isolated point of $\sigma(A)$}   \\
& \hspace*{3.3cm} \text{with $\dim(\ran(P(\lambda_0;A))) < \infty$}\}.     \lb{2.7} 
\end{split}
\end{align}
Any element of $\sigma_d(A)$ in \eqref{2.7} is called a {\it discrete eigenvalue of $A$.}

\begin{remark} \lb{r2.1}
Assume that $A$ is closed in $\cK$. Then what we called a {\it discrete eigenvalue of $A$} in \eqref{2.7} 
(see, e.g., \cite[p.~13]{RS78}), is also called an 
{\it eigenvalue of finite-type of $A$} (cf.\, e.g.,  \cite[p.~326]{GGK90}), or, a 
{\it normal eigenvalue of $A$} (see, e.g., \cite[p.~9]{GK69}). 
${}$ \hfill $\diamond$
\end{remark}
  
Next, to illustrate the notion of Jordan chains with a concrete and exactly solvable example of a non-self-adjoint, one-dimensional, 
periodic Schr\"odinger operator, exhibiting algebraic multiplicities of eigenvalues strictly larger than geometric ones, we now 
develop the case of the exactly solvable exponential potential in some detail:
  
\begin{proposition} \lb{p2.2}
Let $\alpha \in \bbC$, consider the potential
\begin{equation}
V(\alpha,x) = \alpha^2 e^{ix}, \quad x \in \bbR,    \lb{2.8A}
\end{equation}
and introduce the associated Schr\"odinger differential expression\footnote{We chose $\alpha^2$ instead of $\alpha$ to be the coupling constant in \eqref{2.9A} to avoid taking numerous square roots of the coupling constant later on (cf.~\eqref{2.12A}).} 
\begin{equation}
\tau (\alpha) = - \f{d^2}{dx^2} + \alpha^2 e^{ix}, \quad x \in \bbR,     \lb{2.9A}
\end{equation}
and the underlying periodic Schr\"odinger operator $H_p(\alpha)$ in $L^2([0,2\pi]; dx)$,  
\begin{align}
& (H_p(\alpha) f)(x) = (\tau (\alpha) f)(x) \text{ for a.e. $x \in [0, 2 \pi]$,} \no \\
& \, f \in \dom(H_p(\alpha)) = \big\{g \in L^2([0,2\pi]; dx) \, \big| \, g, g' \in AC([0, 2\pi]); \\ 
& \hspace*{3.26cm} g(0) = g(2 \pi), \, g'(0) = g'(2 \pi); 
\, g'' \in L^2([0,2\pi]; dx)\big\}.    \no 
\end{align}
Then $\sigma(H_p(\alpha))$ is purely discrete,
\begin{equation}
\sigma(H_p(\alpha)) = \{z \in \bbC \, | \, D(z) = 1\} = \big\{m^2\big\}_{m \in \bbN_0},     \lb{2.11A} 
\end{equation} 
with corresponding $($non-normalized\,$)$ eigenfunctions $y\big(m^2, \dott\big) \in \dom(H_p(\alpha))$ explicitly given by 
\begin{equation}
y\big(m^2,x\big) =  J_{2m} \big(2 \alpha e^{ix/2}\big),
\quad x \in [0, 2\pi], \; m \in \bbN_0     \lb{2.12A} 
\end{equation}
$($with $J_{\nu}(\dott)$ the regular Bessel function of order $\nu \in \bbC$, see 
\cite[Sect.~9.1]{AS72}$)$, if $\alpha \neq 0$. 
In fact, for $m \in \bbN_0$, the associated kernel $($i.e., geometric eigenspace$)$ of $H_p(\alpha) - m^2 I_{ L^2([0,2\pi]; dx)}$ is one-dimensional, 
\begin{equation}
\ker \big(H_p(\alpha) - m^2 I_{ L^2([0,2\pi]; dx)}\big) 
= \big\{c_m y\big(m^2, \dott\big) \, \big| \, c_m \in \bbC\big\},   
\end{equation}
if $\alpha \neq 0$.
If $\alpha = 0$, then 
\[
\ker \big(H_p(\alpha) - m^2 I_{ L^2([0,2\pi]; dx)}\big) 
= \big\{ x \mapsto c_m e^{i m x} + d_m e^{- i m x} \, \big| \,  c_m, d_m \in \bbC \big\}, \quad m \in \bbN_0, 
\]
is two dimensional, except if $m = 0$, when it is one-dimensional. 
Next, introducing 
\begin{align}
\begin{split} 
\dot y\big(m^2,x\big) = [(2m-1)!] \sum_{k=0}^{2m-1}  [(2m-k) (k!)]^{-1} 
\big[\alpha e^{ix/2}\big]^{k-2m} J_k\big(2 \alpha e^{ix/2}\big),& \\
x \in [0,2 \pi], \;  m \in \bbN,&    \lb{2.13A} 
\end{split}
\end{align}
then $\dot y\big(m^2,\dott\big) \in \dom\Big( \big(H_p(\alpha) - m^2 I_{ L^2([0,2\pi]; dx)}\big)^2\Big)$, 
$m \in \bbN$, and 
\begin{align}
& \big(H_p(\alpha) - m^2 I_{ L^2([0,2\pi]; dx)}\big) \dot y\big(m^2, \dott \big) = y\big(m^2, \dott\big), 
\quad m \in \bbN, \\
& \big(H_p(\alpha) - m^2 I_{ L^2([0,2\pi]; dx)}\big)^2 \dot y\big(m^2, \dott \big) = 0, \quad m \in \bbN.  
\end{align}
Moreover, for each $m \in \bbN$, the algebraic eigenspace of $H_p(\alpha)$ corresponding to the eigenvalue 
$m^2$ is two-dimensional and given by
\begin{equation}
\big\{c_m y\big(m^2,\dott\big) + d_m \dot y\big(m^2,\dott\big) \, \big| \, c_m, d_m \in \bbC \big\},   \lb{2.16A} 
\end{equation}
exhibiting the Jordan chain $\big\{ y\big(m^2, \dott \big),  \dot y\big(m^2, \dott \big) \big\}$ $($cf. \eqref{2.1a}$)$ corresponding to the eigenvalue $m^2$, $m \in \bbN$. In particular, one obtains
\begin{align}
\begin{split} 
& m_g (0; H_p(\alpha)) = 1, \\
& m_g \big(m^2; H_p(\alpha)\big) = 
\begin{cases} 1, & m \in \bbN, \; \alpha \in \bbC \backslash \{0\},  \\  
2, & m \in \bbN, \; \alpha = 0, \end{cases}     \lb{2.17A}
\end{split} \\ 
\begin{split} 
& m_a(0; H_p(\alpha)) = 1, \\   
& m_a \big(m^2; H_p(\alpha)\big) = 2, \quad m \in \bbN, \; \alpha \in \bbC.    \lb{2.18A}
\end{split} 
\end{align}

In \eqref{2.11A} we abbreviated the underlying Floquet 
discriminant by $D(\, \cdot \,)$ and note that actually,
\begin{equation}
D(z) = \cos\big(z^{1/2} 2\pi\big), \quad z \in \bbC,      \lb{2.22A} 
\end{equation} 
in this particular example.
\end{proposition}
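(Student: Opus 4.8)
The plan is to reduce everything to Bessel's equation. For $\alpha \neq 0$ the substitution $w = 2 \alpha e^{ix/2}$, under which $d/dx = (i/2)\, w\, d/dw$ and $\alpha^2 e^{ix} = w^2/4$, turns $-y'' + \alpha^2 e^{ix} y = z y$ into $w^2 y_{ww} + w y_w + (w^2 - 4 z) y = 0$, i.e., Bessel's equation of order $\nu = 2 z^{1/2}$, so that $\big\{J_{2 z^{1/2}}\big(2 \alpha e^{ix/2}\big), J_{-2 z^{1/2}}\big(2 \alpha e^{ix/2}\big)\big\}$ is a fundamental system whenever $2 z^{1/2} \notin \bbZ$. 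Since the shift $x \mapsto x + 2\pi$ corresponds to $w \mapsto e^{i\pi} w$ and $J_{\pm\nu}\big(e^{i\pi} w\big) = e^{\pm i \pi \nu} J_{\pm\nu}(w)$, the monodromy in this Floquet basis is diagonal with entries $e^{\pm 2\pi i z^{1/2}}$; hence the Floquet discriminant satisfies $D(z) = \frac12 \tr M(z) = \cos\big(2\pi z^{1/2}\big)$ for $2 z^{1/2} \notin \bbZ$, and then for all $z \in \bbC$ by analyticity, which is \eqref{2.22A} and in particular shows $D$ is independent of $\alpha$ (for $\alpha = 0$ use $e^{\pm i z^{1/2} x}$). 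For $z$ with $\det(M(z) - I_2) = 2 - 2 D(z) \neq 0$ the periodic boundary value problem is uniquely solvable with resolvent an integral operator with continuous kernel, so $z \in \rho(H_p(\alpha))$ and the resolvent is compact; for $D(z) = 1$ there is a nontrivial periodic solution. Thus $\sigma(H_p(\alpha))$ is purely discrete and equals $\{z \,|\, D(z) = 1\} = \big\{m^2\big\}_{m \in \bbN_0}$, which is \eqref{2.11A}.

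For $\alpha \neq 0$ and $z = m^2$ (so $\nu = 2m$), the function $x \mapsto J_{2m}\big(2 \alpha e^{ix/2}\big)$ is entire in $x$, solves $\tau(\alpha) y = m^2 y$, and is $2\pi$-periodic because $J_{2m}\big(e^{i\pi} w\big) = e^{2\pi i m} J_{2m}(w) = J_{2m}(w)$; hence it lies in $\dom(H_p(\alpha))$ and is an eigenfunction, giving \eqref{2.12A}. For the geometric multiplicity I would use the fundamental system $\big\{J_{2m}\big(2 \alpha e^{i \cdot /2}\big), Y_{2m}\big(2 \alpha e^{i \cdot /2}\big)\big\}$ together with the continuation formula $Y_{2m}\big(e^{i\pi} w\big) = Y_{2m}(w) + 2 i J_{2m}(w)$ for even integer order, from which $\dim \ker(M(m^2) - I_2) = 1$, hence $m_g\big(m^2; H_p(\alpha)\big) = 1$ and $\ker\big(H_p(\alpha) - m^2 I\big) = \big\{c_m y\big(m^2, \cdot\big) \,\big|\, c_m \in \bbC\big\}$. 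The case $\alpha = 0$ is immediate: $\ker\big(H_p(0) - m^2 I\big) = \linspan\big\{e^{imx}, e^{-imx}\big\}$ for $m \in \bbN$ and $\linspan\{1\}$ for $m = 0$; together with $m_g\big(0; H_p(\alpha)\big) = 1$ this is \eqref{2.17A}.

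For the Jordan chain at $m^2$, $m \in \bbN$, I would build the generalized eigenvector from the $\nu$-derivative of the Bessel solutions. Since $g_\nu := J_\nu\big(2 \alpha e^{i \cdot /2}\big)$ and $h_\nu := J_{-\nu}\big(2 \alpha e^{i \cdot /2}\big)$ both satisfy $\tau(\alpha) u = (\nu^2/4) u$, differentiating in $\nu$ at $\nu = 2m$ gives $\big(\tau(\alpha) - m^2 I\big) \partial_\nu g_\nu\big|_{\nu = 2m} = m\, y\big(m^2, \cdot\big) = \big(\tau(\alpha) - m^2 I\big) \partial_\nu h_\nu\big|_{\nu = 2m}$. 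Neither summand is periodic (their defects under $x \mapsto x+2\pi$ are $\pm i \pi\, y\big(m^2, \cdot\big)$, from $g_\nu(x + 2\pi) = e^{i\pi\nu} g_\nu(x)$), but in $\dot y\big(m^2, \cdot\big) := (2m)^{-1} \partial_\nu[g_\nu + h_\nu]\big|_{\nu = 2m}$ the defects cancel; hence $\dot y\big(m^2, \cdot\big)$ is $2\pi$-periodic, obeys $\big(\tau(\alpha) - m^2 I\big) \dot y\big(m^2, \cdot\big) = y\big(m^2, \cdot\big)$, and therefore $\dot y\big(m^2, \cdot\big) \in \dom\big((H_p(\alpha) - m^2 I)^2\big)$ with $\big(H_p(\alpha) - m^2 I\big)^2 \dot y\big(m^2, \cdot\big) = 0$. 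Expanding $\partial_\nu[J_\nu(w) + J_{-\nu}(w)]\big|_{\nu = 2m}$ through the power series of $J_\nu$, the logarithmic terms $\ln(w/2) J_{2m}(w)$ and the $Y_{2m}$ contributions cancel, leaving exactly the finite sum of lower-order Bessel functions in \eqref{2.13A} (alternatively one checks \eqref{2.13A} by direct substitution using $J_k' = (J_{k-1} - J_{k+1})/2$ and $(2k/w) J_k = J_{k-1} + J_{k+1}$, which makes the sum telescope). Finally, since $m_g\big(m^2; H_p(\alpha)\big) = 1$ the algebraic eigenspace carries a single Jordan chain, whose length is $m_a\big(m^2; H_p(\alpha)\big)$; it has length at least $2$, and cannot be continued, since a periodic solution $f$ of $\big(\tau(\alpha) - m^2 I\big) f = \dot y\big(m^2, \cdot\big) + c\, y\big(m^2, \cdot\big)$ would — after removing the particular solution $(4m^2)^{-1} \partial_\nu^2[g_\nu + h_\nu]\big|_{\nu = 2m}$ and homogeneous terms from $\linspan\big\{J_{2m}\big(2 \alpha e^{i \cdot /2}\big), Y_{2m}\big(2 \alpha e^{i \cdot /2}\big)\big\}$ — retain a nonzero periodicity defect proportional to the non-periodic homogeneous solution $\partial_\nu[g_\nu - h_\nu]\big|_{\nu = 2m}$, which carries the $\ln(w/2)$-contribution (an $x\, J_{2m}\big(2 \alpha e^{ix/2}\big)$ term). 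Hence $m_a\big(m^2; H_p(\alpha)\big) = 2$ for $m \in \bbN$ and every $\alpha \in \bbC$, so the algebraic eigenspace is \eqref{2.16A} with Jordan chain $\big\{y\big(m^2, \cdot\big), \dot y\big(m^2, \cdot\big)\big\}$; since $z = 0$ is a simple zero of $\det(M(\cdot) - I_2) = 4 \sin^2\big(\pi (\cdot)^{1/2}\big)$ one also gets $m_a\big(0; H_p(\alpha)\big) = m_g\big(0; H_p(\alpha)\big) = 1$, giving \eqref{2.18A}. (A shorter route to this last point is to invoke the standard identification of $m_a\big(\lambda_0; H_p(\alpha)\big)$ with the order of $\lambda_0$ as a zero of $\det(M(\cdot) - I_2)$, valid also in the non-self-adjoint setting, which is $2$ at every $m^2$, $m \in \bbN$, uniformly in $\alpha$.)

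The main obstacle is this last step — matching the explicit generalized eigenfunction and pinning the algebraic multiplicity at $2$. One has to track the multivalued $\ln(w/2)$ and $Y_{2m}(w)$ under the rotation $w \mapsto e^{i\pi} w$, verify the cancellations that make $\dot y\big(m^2, \cdot\big)$ both periodic and equal to \eqref{2.13A}, and — the genuinely delicate point — rule out a Jordan chain of length $3$: although $\partial_\nu^2[g_\nu + h_\nu]\big|_{\nu = 2m}$ is a bona fide third generalized eigenfunction of the \emph{differential expression} $\tau(\alpha)$, it violates the periodic boundary conditions and cannot be repaired. The remaining ingredients — the change of variables, the Floquet computation of $D$, and the $\alpha = 0$ case — are routine.
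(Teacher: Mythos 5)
Your proposal is correct and follows the paper's overall route: reduce to Bessel's equation, compute the Floquet discriminant $D(z)=\cos\big(2\pi z^{1/2}\big)$ from the monodromy, obtain the eigenfunction $J_{2m}\big(2\alpha e^{ix/2}\big)$, build the generalized eigenvector by differentiating in a parameter, and pin $m_a\big(m^2;H_p(\alpha)\big)=2$ via a determinant/zero-order identification. Two deviations are worth noting. First, you differentiate with respect to the Bessel \emph{order}, forming $(2m)^{-1}\partial_\nu\big[J_\nu+J_{-\nu}\big]\big|_{\nu=2m}$, so the $Y_{2m}$ contributions cancel by parity; the paper differentiates in the spectral parameter $z$ with $c_1(z)J_{2z^{1/2}}+c_2(z)Y_{2z^{1/2}}$ and tunes $\dot c_2(m^2)=-\pi/(2m)$ to kill $Y_{2m}$. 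These are the same up to $d/dz=(2/\nu)\,d/d\nu$, and both reproduce \eqref{2.13A}. Second, and more substantively, your primary argument that no Jordan chain of length $3$ exists is a direct one: the particular solution $(4m^2)^{-1}\partial_\nu^2[J_\nu+J_{-\nu}]\big|_{\nu=2m}$ of $(\tau-m^2)f=\dot y+(4m^2)^{-1}y$ acquires under $x\mapsto x+2\pi$ a periodicity defect with a nonzero $Y_{2m}$-component, which cannot be absorbed by homogeneous terms (whose defects are proportional to $J_{2m}$ only), so no periodic preimage of $\dot y+cy$ exists for any $c$. The paper instead cites the Fredholm determinant identity $\det\big((H_p(\alpha)-zI)(H_p(\alpha)-z_0I)^{-1}\big)=\big(D(z)-1\big)/\big(D(z_0)-1\big)$ and reads off $m_a$ as the order of the zero of $D(z)-1$ at $m^2$; you offer that as your ``shorter route.'' Your direct argument is self-contained and avoids appeal to the external Fredholm--determinant/Floquet identity, at the cost of a careful tracking of the $Y_{2m}$ (equivalently $\ln(w/2)$) contributions under $w\mapsto e^{i\pi}w$; both approaches are sound.
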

\begin{proof}
The (self-adjoint) case $\alpha = 0$ can be done by elementary means, since
the differential equation  $- y''(z,x) = z y(z,x)$, is explicitly solvable in terms of the exponential
functions $y_{\pm}(z,x) = e^{\pm i z^{1/2} x}$, $x \in \bbR$.
So in the sequel we may (and do) assume that $\alpha \neq 0$.

We start by observing that the general solution of the differential equation 
$\tau(\alpha) y(z, \dott) = z y(z, \dott)$, $z \in \bbC$, is of the explicit form (see, e.g., \cite[No.~9.1.54]{AS72})
\begin{equation}
y(z,x) = \begin{cases} c_1(z) J_{2z^{1/2}} \big(2 \alpha e^{i x/2}\big) 
+ c_2(z) J_{- 2z^{1/2}} \big(2 \alpha e^{ix/2}\big), 
& 2 z^{1/2} \in \bbC \backslash \bbN_0, \\[1mm] 
d_1(n) J_{n} \big(2 \alpha e^{i x/2}\big) + d_2(n) Y_{n} \big(2 \alpha e^{i x/2}\big), & 2 z^{1/2} =n, \, n \in \bbN_0, 
\end{cases}   \lb{2.7aa} 
\end{equation}
with $J_{\nu}(\dott), Y_{\nu}(\dott)$, $\nu \in \bbC$, the standard regular and 
irregular Bessel functions (see again, e.g., \cite[Sect.~9.1]{AS72}), and $c_j(z), d_j(n) \in \bbC$, $j = 1,2$, 
appropriate constants with respect to $x \in \bbR$. 

Next, introducing the special fundamental system of solutions $\phi_0(z, \dott), \theta_0(z, \dott)$ of 
$\tau (\alpha) y (z, \dott) = z y(z, \dott)$, entire in the parameter $z \in \bbC$ and uniquely characterized 
by the initial conditions
\begin{equation}
\phi_0 (z,0) = 0, \; \phi_0 ' (z,0) = 1,  \quad \theta_0 (z,0) = 1, \; \theta_0 '(z,0) = 0, \quad z \in \bbC,
\end{equation}
one obtains (for $2 z^{1/2} \in \bbC \backslash \bbN_0$, $x \in \bbR$)
\begin{align}
& \phi_0(z,x) = \f{i \pi}{\sin\big(z^{1/2} 2 \pi\big)} \big[J_{2z^{1/2}}(2\alpha) J_{- 2z^{1/2}} \big(2 \alpha e^{ix/2}\big) 
- J_{-2z^{1/2}}(2\alpha) J_{2z^{1/2}} \big(2 \alpha e^{ix/2}\big)\big],   \lb{2.9a} \\
& \theta_0(z,x) = \f{\pi \alpha}{\sin\big(z^{1/2} 2 \pi\big)} \big[J_{2z^{1/2}}' (2\alpha) J_{- 2z^{1/2}} \big(2 \alpha e^{ix/2}\big) 
- J_{-2z^{1/2}}' (2\alpha) J_{2z^{1/2}} \big(2 \alpha e^{ix/2}\big)\big],     \lb{2.10a} 
\end{align}
and (for $2z^{1/2} = n \in \bbN_0$, $x \in \bbR$)
\begin{equation}
\begin{split}
& \phi_0(z,x) =  i \pi \big[ Y_n(2\alpha) J_n \big(2 \alpha e^{ix/2}\big)
                             - J_n(2\alpha) Y_n \big(2 \alpha e^{ix/2}\big) \big],   \\
& \theta_0(z,x) = \pi \alpha \big[ Y_n'(2\alpha) J_n \big(2 \alpha e^{ix/2}\big)
                                   - J_n'(2\alpha) Y_n \big(2 \alpha e^{ix/2}\big) \big] . 
\end{split}
                                   \end{equation}
In particular,
\begin{equation}
W(\theta_0(z,\dott), \phi_0(z,\dott)) = 1, \quad z \in \bbC, 
\end{equation}
and the monodromy matrix $\cM(\dott)$ associated with $\tau(\alpha)$ is thus of the type
\begin{equation}
\cM(z) = \begin{pmatrix} \theta_0(z,2 \pi) & \phi_0(z,2 \pi) \\ \theta_0'(z,2 \pi) & \phi_0'(z,2 \pi) \end{pmatrix}, 
\quad z \in \bbC,
\end{equation}
with Floquet discriminant (i.e., Lyapunov function) $D(\dott)$ given by
\begin{equation}
D(z) = \tr_{\bbC^2} (\cM(z)) / 2 = [\theta_0(z,2 \pi) + \phi_0'(z,2 \pi)]/2, \quad z \in \bbC. 
\end{equation} 
Employing standard properties of the Bessel functions in \eqref{2.9a}, \eqref{2.10a} and \cite[No.~9.1.35]{AS72} one confirms that actually, 
$D(z) = \cos\big(z^{1/2} 2\pi\big)$, $z \in \bbC$, that is, \eqref{2.22A} holds (see also \cite{De??}, 
\cite{Ga80}, \cite{GU83}, \cite{GU83a}, \cite{PT88}, \cite{PT91}, \cite{Sh03}, \cite{Sh04}).

To determine the spectrum of $H_p(\alpha)$ one recalls that by general Floquet theory 
\begin{equation}
\sigma(H_p(\alpha)) = \{z \in \bbC \, | \, D(z) = 1\}. 
\end{equation}
Furthermore, one verifies that the resolvent of $H_p(\alpha)$ is a Hilbert--Schmidt operator and hence
$\sigma(H_p(\alpha))$ is purely discrete, and given by
\begin{equation}
\sigma(H_p(\alpha)) = \big\{m^2\big\}_{m \in \bbN_0},  
\end{equation} 
with corresponding eigenfunctions $y_m \in \dom(H_p(\alpha))$ explicitly given by \eqref{2.12A}. 
In fact, using \eqref{2.7aa} it follows that the associated kernel of $H_p(\alpha) - m^2 I_{ L^2([0,2\pi]; dx)}$ is one-dimensional, 
since (cf.\ \cite[No.~9.1.36]{AS72})
\begin{align}
\begin{split} 
Y_{\nu}\big(\zeta e^{i\pi}\big) &= e^{- i \nu \pi} Y_{\nu}(\zeta) 
+ 2 i \cos(\nu \pi) J_{\nu}(\zeta), \quad \zeta \in \bbC \backslash (-\infty,0],     \\
Y_{\nu}'\big(\zeta e^{i\pi}\big) &= - e^{- i \nu \pi} Y_{\nu}'(\zeta) 
- 2 i \cos(\nu \pi) J_{\nu}'(\zeta), \quad \zeta \in \bbC \backslash (-\infty,0],     
\end{split}
\end{align}
(and similarly on the cut $\zeta \in (-\infty,0]$) and hence 
\begin{align}
\begin{split}
Y_{2m}(2 \alpha e^{i\pi}) &= Y_{2m}(2 \alpha) + 2i J_{2m}(2 \alpha),    \\
Y_{2m}'(2 \alpha e^{i\pi}) &= - Y_{2m}'(2 \alpha) - 2i J_{2m}'(2 \alpha), 
\end{split}
\end{align}
that is, $Y_{2m}(2 \alpha e^{i x/2})$, $m \in \bbN_0$, cannot satisfy the periodic boundary conditions at 
$x =0$ and $2\pi$ for elements in $\dom(H_p(\alpha))$ (see also, \cite[Sect.~9.5]{AS72}). On the other hand, 
employing 
\begin{equation}
J_{2m} \big(\zeta e^{i \pi}\big) = J_{2m}(\zeta), \quad 
J_{2m}' \big(\zeta e^{i \pi}\big) = - J_{2m}' (\zeta),  \quad 
\zeta \in \bbC    \lb{2.33A} 
\end{equation}
(cf.~\cite[9.1.35]{AS72}), $J_{2m}(2 \alpha e^{i x/2})$, $m \in \bbN_0$, clearly satisfies these periodic boundary conditions at $x =0$ and $2\pi$. 

To determine the algebraic multiplicity of the periodic eigenvalues $m^2$, $m \in \bbN_0$, we recall the fact (see, e.g., \cite{GW95}),
\begin{align}
& {\det}_{L^2([0,2\pi]; dx)} \big((H_p(\alpha) - z I_{L^2([0,2\pi]; dx)}) 
(H_p(\alpha) - z_0 I_{L^2([0,2\pi]; dx)})^{-1}\big)    \no \\
& \quad = {\det}_{L^2([0,2\pi]; dx)} \big(I_{L^2([0,2\pi]; dx)} - (z - z_0) 
(H_p(\alpha) - z_0 I_{L^2([0,2\pi]; dx)})^{-1}\big)   \no \\ 
& \quad = \f{D(z) - 1}{D(z_0) - 1} 
= \f{\cos\big(z^{1/2} 2 \pi\big) - 1}{\cos\big(z_0^{1/2} 2 \pi\big) - 1}, \quad z \in \bbC, \; 
z_0 \in \bbC \big \backslash \big\{m^2\big\}_{m \in \bbN_0},     \lb{2.32}
\end{align}
where $\det_{\cH} (I_{\cH} + T)$ represents the Fredholm determinant in connection with the trace class operator $T \in \cB_1(\cH)$ in the complex, 
separable Hilbert space $\cH$. Thus, the algebraic multiplicity of the eigenvalue $m^2$, $m \in \bbN_0$, of $H_p(\alpha)$ coincides with the 
order of the zero of 
\begin{equation}
{\det}_{L^2([0,2\pi]; dx)} \big(I_{L^2([0,2\pi]; dx)} - (z - z_0) 
(H_p(\alpha) - z_0 I_{L^2([0,2\pi]; dx)})^{-1}\big)
\end{equation}
at the point $z = m^2$, and hence also with the order of the zero of $D(z) - 1 = \cos\big(z^{1/2} 2 \pi\big) - 1$ at $z = m^2$. This proves \eqref{2.17A}, \eqref{2.18A}, 
see also \cite{De??}, \cite{Ga80}, \cite{GU83}, \cite{GU83a}, \cite{PT88}, \cite{PT91}.

It remains to determine the Jordan chains associated with $m^2$, $m \in \bbN$. For this purpose we fix 
$m \in \bbN$ and note that
\begin{equation}\label{asas}
- y''(z,x) + \big[\alpha^2 e^{ix} -z\big] y(z,x) = 0, 
\end{equation}
implies (with $\dot {}$ abbreviating $d/dz$), 
\begin{equation}
- \dot y''(z,x) + \big[\alpha^2 e^{ix} -z\big] \dot y(z,x) = y(z,x). 
\end{equation}
We note that for $z\in\bbC$ the function
\begin{equation}
y(z,x) = 
c_1(z) J_{2z^{1/2}} \big(2 \alpha e^{i x/2}\big) + c_2(z) Y_{2z^{1/2}} \big(2 \alpha e^{i x/2}\big) 
\end{equation}
is a solution of \eqref{asas} (see, e.g., \cite[No.~9.1.54]{AS72} or \eqref{2.7aa} for $z=m^2$) and,
from now on for simplicity, 
we agree to choose $c_1(z)=1$, $z \in \bbC$, and that $c_2(\dott)$ is differentiable (without loss of generality) and that it satisfies 
\begin{equation}
c_2\big(m^2\big) = 0,    \lb{2.33}
\end{equation}
in accordance with the boundary conditions in $\dom(H_p(\alpha))$; $c_2(\dott)$ will explicitly be chosen  
in \eqref{2.41A}. One then computes 
\begin{align}
\begin{split} 
\dot y(z,x) &= 
 \dot c_2(z) Y_{2 z^{1/2}}\big(2 \alpha e^{ix/2}\big)    \\
& \quad + (\partial/\partial z) J_{2 z^{1/2}}\big(2 \alpha e^{ix/2}\big) 
+ c_2(z) (\partial/\partial z) Y_{2 z^{1/2}}\big(2 \alpha e^{ix/2}\big),
\end{split} 
\end{align}
and hence 
\begin{align}
\dot y\big(m^2,x\big) &= \dot c_2 \big(m^2\big) Y_{2m}\big(2 \alpha e^{ix/2}\big) + (\partial/\partial z) J_{2 z^{1/2}}\big(2 \alpha e^{ix/2}\big)\big|_{z=m^2}   \no\\
&= \dot c_2 \big(m^2\big) Y_{2m}\big(2 \alpha e^{ix/2}\big) 
+ \Bigg[[\pi/(2m)] Y_{2m}\big(2 \alpha e^{ix/2}\big)    \no \\ 
& \quad + [(2m-1)!] \sum_{k=0}^{2m-1} 
\f{\big[\alpha e^{ix/2}\big]^{k-2m}}{(2m-k) (k!)} J_k \big(2 \alpha e^{ix/2}\big)\Bigg],   \lb{2.35} 
\end{align}
where we employed \eqref{2.33} and (cf.~\cite[No.~9.1.66]{AS72}) 
\begin{align}
\begin{split} 
[(\partial/\partial \nu) J_{\nu}(\zeta)\big|_{\nu = 2m} &= (\pi/2) Y_{2m}(\zeta)   \\
& \quad + 2^{-1} [(2m)!] \sum_{k=0}^{2m-1} [(2m-k) (k!)]^{-1} (\zeta/2)^{k-2m} J_k(\zeta).
\end{split} 
\end{align}
Next, we choose,  
\begin{equation}
\dot c_2\big(m^2\big) = - \pi/(2m) \, \text{ and } \, c_2(z) = - \pi \big(z - m^2\big)/(2m), \quad m \in \bbN, 
\lb{2.41A} 
\end{equation}
to eliminate $Y_{2m}(\dott)$ in \eqref{2.35}, finally resulting in
\begin{equation}
\dot y\big(m^2,x\big) = [(2m-1)!] \sum_{k=0}^{2m-1}  [(2m-k) (k!)]^{-1} 
\big[\alpha e^{ix/2}\big]^{k-2m} J_k\big(2 \alpha e^{ix/2}\big), \quad m \in \bbN.   \lb{2.42A} 
\end{equation}
Exploiting a slight extension of \eqref{2.33A}, 
\begin{equation}
J_k \big(\zeta e^{i \pi}\big) = (-1)^k J_k (\zeta), \quad 
J_k' \big(\zeta e^{i \pi}\big) = (-1)^{k+1} J_k' (\zeta),  \quad 
\zeta \in \bbC, \; k \in \bbN_0    
\end{equation}
(cf.~\cite[9.1.35]{AS72}), one verifies that 
\begin{equation}
\dot y\big(m^2,0\big) = \dot y\big(m^2,2 \pi\big), \quad 
\dot y'\big(m^2,0\big) = \dot y'\big(m^2,2 \pi\big),  
\end{equation}
that is, $\dot y\big(m^2,\dott\big)$, $m \in \bbN$, in \eqref{2.42A} satisfy the boundary conditions in $\dom(H_p(\alpha))$. 
Moreover, employing the identity (cf.~\cite[No.~9.1.27]{AS72}), 
\begin{align}
& J_{\nu}'(\zeta) = - J_{\nu + 1}(\zeta) + (\nu/\zeta) J_{\nu}(\zeta), \quad\nu\in\bbC, 
\end{align}
and using the fact that 
$J_{\nu}(\dott)$ satisfies the second-order differential equation 
\begin{equation}
 \frac{d^2}{dx^2} J_{\nu}\big(2 \alpha e^{ix/2}\big)=\big[\alpha^2 e^{ix} - \big(\nu^2 \big/ 4\big)\big] J_{\nu}\big(2 \alpha e^{ix/2}\big), \quad \nu\in\bbC,\,\,x \in \bbR, 
\end{equation}
(this follows from \cite[No.~9.1.54]{AS72}),
one verifies that  
\begin{equation}
- \dot y''\big(m^2,x\big) + \big[\alpha^2 e^{ix} - m^2\big] \dot y\big(m^2,x\big) = y\big(m^2,x\big), 
\quad m \in \bbN, 
\end{equation}
as follows:
\begin{align}
& \bigg[- \f{d^2}{dx^2} + \alpha^2 e^{ix}\bigg] \dot y\big(m^2,x\big) = \alpha^2 e^{ix} \dot y\big(m^2,x\big) 
\no \\
& \qquad - [(2m-1)!] \sum_{k=0}^{2m-1} \f{\alpha^{k-2m}}{(2m-k) [k!]} \bigg[- \f{(k-2m)^2}{4} e^{i(k-2m)x/2} 
J_k\big(2 \alpha e^{ix/2}\big)     \no \\
& \hspace*{5.8cm} - \alpha (k-2m) e^{i(k+1-2m)x/2} J_k'\big(2 \alpha e^{ix/2}\big)     \no \\ 
& \hspace*{5.8cm} + e^{i(k-2m)x/2} \big[\alpha^2 e^{ix} - \big(k^2 \big/ 4\big)\big] J_k\big(2 \alpha e^{ix/2}\big)\bigg]   \no \\
& \quad = m^2 \dot y\big(m^2,x\big) 
+ [(2m-1)!] \sum_{k=0}^{2m-1} \f{\alpha^{k-2m} (k-2m)}{(2m-k) [k!]} e^{i(k-2m)x/2}    \no \\
& \hspace*{5.5cm} \times 
\Big[(k/2) J_k\big(2 \alpha e^{ix/2}\big) + \alpha e^{i x/2} J_k'\big(2 \alpha e^{i x/2}\big)\Big]     \no \\
& \quad = m^2 \dot y\big(m^2,x\big) - [(2m-1)!] \Bigg[\sum_{k=1}^{2m-1} 
\f{\big[\alpha e^{ix/2}\big]^{k-2m}}{[(k-1)!]} J_k\big(2 \alpha e^{ix/2}\big)   \no \\ 
& \hspace*{4.9cm} - \sum_{k=0}^{2m-1} 
\f{\big[\alpha e^{ix/2}\big]^{k+1-2m}}{[k!]} J_{k+1}\big(2 \alpha e^{ix/2}\big)  \Bigg]    \no \\
& \quad = m^2 \dot y\big(m^2,x\big) + J_{2m}\big(2 \alpha e^{ix/2}\big)    \no \\
& \quad = m^2 \dot y\big(m^2,x\big) + y\big(m^2,x\big),
\end{align}
proving \eqref{2.13A}--\eqref{2.18A}. 
\end{proof}

\begin{remark} \lb{r2.2}
The fact that $m_a\big(m^2; H_p(\alpha)\big)=2$, $m\in\bbN$, can also independently be established as follows. Suppose that $\Omega \subseteq \bbC$ open, 
$\cH$ is a complex separable Hilbert space, 
$T: \Omega \to \cB_1(\cH)$ is analytic with respect to the trace norm $\|\dott\|_{\cB_1(\cH)}$, and $A$ is a densely defined, closed operator 
in $\cH$ such that $(A - z_0 I_{\cH})^{-1} \in \cB_1(\cH)$ for some (and hence for all) $z_0 \in \rho(A)$. Then the trace formula (cf.\ \cite[eq.~(IV.1.14), p.~163]{GK69})
\begin{equation}
\tr_{\cH}\big((I_{\cH} - T(z))^{-1} T'(z)\big) = - \f{d}{dz} \ln({\det}_{\cH}(I_{\cH} - T(z))), \quad 
z \in \Omega,
\end{equation}
applied to the special case 
\begin{align}
\begin{split} 
T_A(z) = I_{\cH} - (A - z I_{\cH}) (A - z_0 I_{\cH})^{-1} = (z - z_0) (A - z_0 I_{\cH})^{-1} \in \cB_1(\cH),& \\
z_0 \in \rho(A), \; z \in \bbC,&
\end{split} 
\end{align}
yields
\begin{align}
& - \f{d}{dz} \ln\big({\det}_{\cH}\big((I_{\cH} - (z - z_0) (A - z_0 I_{\cH})^{-1}\big)\big)   \no \\ 
& \quad = - \f{d}{dz} \ln({\det}_{\cH}(I_{\cH} - T_A(z)))    \no \\ 
& \quad = \tr_{\cH} \big((I_{\cH} - T_A(z))^{-1} T_A'(z)\big)   \no \\
& \quad = \tr_{\cH} \Big(\big[(A - z I_{\cH}) (A - z_0 I_{\cH})^{-1}\big]^{-1} (A - z_0 I_{\cH})^{-1}\Big)  \no \\
& \quad = \tr_{\cH}\big(\big[(A - z_0 I_{\cH}) (A - z I_{\cH})^{-1}\big] (A - z_0 I_{\cH})^{-1}\big)   \no \\
& \quad = \tr_{\cH}\big((A - z I_{\cH})^{-1}\big), \quad z_0, z \in \rho(A),      \lb{2.50} 
\end{align}
employing cyclicity of the trace.

An application of \eqref{2.50} to $A = H_p(\alpha)$ together with \eqref{2.32} thus implies for $z_0 \in \rho(H_p(\alpha)$) 
\begin{align}
& {\tr}_{L^2([0,2\pi];dx)} \big((H_p(\alpha) - z I_{L^2([0,2\pi];dx)})^{-1}\big)    \no \\
& \quad = - \f{d}{dz} \ln\big({\det}_{L^2([0,2\pi];dx)} \big(I_{L^2([0,2\pi];dx)} - (z - z_0) (H_p(\alpha) - z_0 I_{L^2([0,2\pi];dx)})^{-1}\big)\big)    \no \\
& \quad = - \f{d}{dz} \ln\left(\frac{D(z) - 1}{D(z_0) -1}\right)   \no \\
& \quad = \f{\dot D(z)}{1 - D(z)}, \quad z \in \bbC \backslash \big\{m^2\big\}_{m \in \bbN_0}. \lb{2.51} 
\end{align}
(One notes that the 2nd and 3rd lines in \eqref{2.51} are independent of $z_0$).

Thus, one confirms (for $0 < \varepsilon$ sufficiently small and $m \in \bbN$)
\begin{align}
m_a\big(m^2;H_p(\alpha)\big) &= {\tr}_{L^2([0,2\pi];dx)}\big(P\big(m^2;H_p(\alpha)\big)\big) \no \\
&= {\tr}_{L^2([0,2\pi];dx)}\bigg(\f{-1}{2 \pi i} \ointctrclockwise_{\partial D(m^2; \varepsilon)}
d\zeta \, (H_p(\alpha)-\zeta I_{L^2([0,2\pi];dx)})^{-1}\bigg)  \no \\
&= \f{1}{2 \pi i} \ointctrclockwise_{\partial D(m^2; \varepsilon)} d\zeta \, 
\f{\dot D(\zeta)}{D(\zeta) - 1}     \no \\ 
&= \f{1}{2 i} \ointctrclockwise_{\partial D(m^2; \varepsilon)} d\zeta \, \zeta^{-1/2} 
\cot\big(\zeta^{1/2}\pi\big)    \no \\ 
&= \f{1}{2 i} \ointctrclockwise_{\partial D(m^2; \varepsilon)} d\zeta \, 
\f{2}{\pi} \bigg[\f{1}{\zeta - m^2} + \Oh(1)\bigg]    \no \\ 
&= 2, \quad m \in \bbN.
\end{align}
Here the term $\Oh(1)$ abbreviates an analytic function in an open neighborhood of 
$D(m^2; \varepsilon)$ and we employed the elementary identity
\begin{equation}
\f{\sin\big(\zeta^{1/2} 2\pi\big)}{1 - \cos\big(\zeta^{1/2} 2\pi\big)} 
= \cot\big(\zeta^{1/2}\pi\big), \quad \zeta \in \bbC \backslash \big\{m^2\big\}_{m \in \bbN_0}.
\end{equation}
${}$ \hfill $\diamond$
\end{remark} 

\begin{remark} \lb{r2.2a}
Without going into further details, we note that the antiperiodic Schr\"odinger operator $H_{ap}(\alpha)$ in $L^2([0,2\pi]; dx)$ is defined by 
\begin{align}
& (H_{ap}(\alpha) f)(x) = (\tau(\alpha) f)(x) \text{ for a.e. $x \in [0, 2 \pi]$,} \no \\
& \, f \in \dom(H_{ap}(\alpha)) = \big\{g \in L^2([0,2\pi]; dx) \, \big| \, g, g' \in AC([0, 2\pi]);   \lb{2.19A} \\ 
& \hspace*{3.4cm} g(0) = - g(2 \pi), \, g'(0) = - g'(2 \pi); 
\, g'' \in L^2([0,2\pi]; dx)\big\},      \no 
\end{align}
and one obtains in close analogy to \eqref{2.11A}, \eqref{2.17A}, and \eqref{2.18A},  
\begin{align}
& \sigma(H_{ap}(\alpha)) = \{z \in \bbC \, | \, D(z) = -1\} = \big\{[m - (1/2)]^2\big\}_{m \in \bbN},   \lb{2.20A} \\
& m_g \big([m - (1/2)]^2; H_{ap}(\alpha)\big) = 1, \quad m_a \big([m - (1/2)]^2; H_{ap}(\alpha)\big) = 2,  
\quad m \in \bbN.      \lb{2.21A} 
\end{align}
${}$ \hfill $\diamond$
\end{remark} 

Finally, we briefly mention a Floquet theoretic result for the corresponding periodic operator acting on 
the real line which is an immediate consequence of Proposition~\ref{p2.2} (and its proof):

\begin{corollary} \lb{c2.3} 
Given $V(\alpha,\dott)$ and $\tau(\alpha)$, $\alpha \in \bbC$, as in \eqref{2.8A} and \eqref{2.9A},  we introduce  the corresponding periodic Schr\"odinger operator $H(\alpha)$ in $L^2(\bbR; dx)$ via 
\begin{align}
& (H(\alpha) f)(x) = (\tau (\alpha) f)(x) \text{ for a.e. $x \in \bbR$,}     \no \\
& \, f \in \dom(H(\alpha)) = \big\{g \in L^2(\bbR; dx) \, \big| \, g, g' \in AC_{loc}(\bbR); \, g'' \in L^2(\bbR; dx)\big\}    \\
& \hspace*{2.4cm} = H^2(\bbR).    \no 
\end{align}
Then 
\begin{equation}
\sigma(H(\alpha)) = [0,\infty),  \quad \alpha \in \bbC, 
\end{equation}
equivalently, one obtains the remarkable fact that the spectrum of $H(\alpha)$ is independent of 
$\alpha \in \bbC$ and hence equals that of $H(0)$, where 
\begin{equation}
H(0) = - d^2/dx^2, \quad \dom(H(0)) = H^2(\bbR). 
\end{equation}
\end{corollary}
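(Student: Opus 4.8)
The plan is to combine the Floquet--Bloch description of the spectrum of a one-dimensional periodic Schr\"odinger operator on the full line with the explicit form of the Floquet discriminant $D(\dott)$ obtained in the proof of Proposition~\ref{p2.2}. For a $2\pi$-periodic (possibly complex-valued) potential, $z \in \sigma(H(\alpha))$ if and only if the equation $\tau(\alpha) y = z y$ possesses a nontrivial Bloch solution $y$, i.e., one satisfying $y(\dott + 2\pi) = e^{i\theta} y(\dott)$ for some $\theta \in \bbR$; equivalently, the monodromy matrix $\cM(z)$ has a Floquet multiplier on the unit circle. Since $\cM(z)$ has determinant $1$ (the underlying Wronskian equals $1$) and $\tr_{\bbC^2}(\cM(z)) = 2 D(z)$, the two multipliers $\rho, \rho^{-1}$ lie on the unit circle precisely when $D(z) = (\rho + \rho^{-1})/2 \in [-1,1]$. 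Hence
\[
 \sigma(H(\alpha)) = \big\{ z \in \bbC \, \big| \, D(z) \in [-1,1] \big\}.
\]
By \eqref{2.22A} the discriminant $D(z) = \cos\big(z^{1/2} 2\pi\big)$ does not depend on $\alpha$, and therefore neither does $\sigma(H(\alpha))$; choosing $\alpha = 0$ identifies it with $\sigma(H(0)) = \sigma(-d^2/dx^2) = [0,\infty)$, which proves the claim.

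To make this self-contained one may also determine the set $\big\{ z \in \bbC \,\big|\, \cos\big(2\pi z^{1/2}\big) \in [-1,1]\big\}$ directly. For $\zeta = a + i b \in \bbC$ a short computation gives $|\cos \zeta|^2 = \cos^2 a + \sinh^2 b$, so $|\cos\zeta| \leq 1$ forces $\sinh^2 b \leq \sin^2 a$, while $\cos\zeta = \cos a \cosh b - i \sin a \sinh b \in \bbR$ forces $\sin a \sinh b = 0$; in either case $b = 0$, so that $\cos\zeta \in [-1,1]$ if and only if $\zeta \in \bbR$. Applying this with $\zeta = 2\pi z^{1/2}$ reduces the membership condition to $z^{1/2} \in \bbR$, which (independently of the chosen branch, since $\cos$ is even) holds if and only if $z \in [0,\infty)$. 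This recovers $\sigma(H(\alpha)) = [0,\infty)$ and, in passing, shows that $H(\alpha)$ has no eigenvalues, since no Bloch solution (nor any solution at all, for $D(z) \notin [-1,1]$) belongs to $L^2(\bbR; dx)$.

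The only genuinely delicate point is the justification of the Floquet characterization of $\sigma(H(\alpha))$ in the \emph{non-self-adjoint} case; for this one appeals to the spectral theory of periodic ordinary differential operators with complex-valued coefficients (as in several of the references already invoked in the proof of Proposition~\ref{p2.2}). Equivalently, one can use the direct-integral decomposition $H(\alpha) \cong \int_{[0,2\pi)}^{\oplus} H_\theta(\alpha) \, d\theta$ into the fiber operators $H_\theta(\alpha)$ in $L^2([0,2\pi]; dx)$ defined by $\tau(\alpha)$ together with the boundary conditions $g(2\pi) = e^{i\theta} g(0)$, $g'(2\pi) = e^{i\theta} g'(0)$, whose (purely discrete) spectra are $\{z \in \bbC \,|\, D(z) = \cos\theta\}$; the boundary fibers $H_0(\alpha) = H_p(\alpha)$ and $H_\pi(\alpha) = H_{ap}(\alpha)$ then reproduce the spectra and multiplicities recorded in Proposition~\ref{p2.2} and Remark~\ref{r2.2a}, serving as a consistency check.
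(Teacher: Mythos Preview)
Your proof is correct and follows essentially the same approach as the paper: both invoke the Floquet-theoretic characterization $\sigma(H(\alpha)) = \{z \in \bbC \,|\, D(z) \in [-1,1]\}$ in the non-self-adjoint setting and then use the explicit discriminant $D(z) = \cos(2\pi z^{1/2})$ from \eqref{2.22A}. The paper simply cites references for both steps, whereas you spell out the Bloch-solution heuristic, the direct-integral picture, and the elementary complex-analysis computation showing $\cos(2\pi z^{1/2}) \in [-1,1] \Leftrightarrow z \in [0,\infty)$; this is additional detail rather than a different route.
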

\begin{proof}
Standard Floquet theory in the non-self-adjoint context $($see, e.g., \cite{GW95} and the literature cited therein$)$ implies  
\begin{equation}
\sigma(H(\alpha)) = \{z \in \bbC \, | \, D(z) \in [-1,1]\} = [0,\infty),  
\end{equation}
see also \cite{Ga80}, \cite{GU83}, \cite{GU83a}, \cite{PT88}, \cite{PT91}, \cite{Sh03}, \cite{Sh04}. 
\end{proof}

\begin{remark} \label{rem222}
The fact that the exponential potential is exactly solvable in terms of Bessel functions is of course well-known, see, for instance, \cite{De??}, 
\cite[Problem~75, p. 196]{Fl94}. The explicit representations of generalized eigenvectors \eqref{2.13A} and algebraic eigenspace 
\eqref{2.16A} appear to be new. Generalizations to appropriate superpositions of exponentials of the form $V(x) = \sum_{n \in \bbN} \alpha_n e^{i n x}$, 
$x \in [0, 2 \pi]$ were studied in \cite{Ga80}, \cite{GU83a} (see also \cite{GU83}), \cite{PT88}, \cite{PT91}, \cite{Sh03}. ${}$ \hfill $\diamond$
\end{remark}

\section{Algebraic and Geometric Multiplicities and Jordan Chains for the Zeros of Strongly Analytic Operator-Valued  Functions} \lb{s3}

In this section we study families of operators, or operator-valued functions rather than a fixed operator as in 
Section~\ref{s2}. Let $\cK$ be a separable, complex Hilbert space and assume that $z\mapsto A(z)$ is a function defined on some open
set $\Omega\subset\bbC$ such that for all $z\in\Omega$ the values $A(z)$ are linear operators in $\cK$ and 
$\dom (A(z))=\cD$ for all $z\in\Omega$. 
In addition, assume that for all $\varphi\in\cD$, the $\cK$-valued function $z\mapsto A(z)\varphi$ is analytic in $\Omega$;
the $\ell$-th derivative of $A(\dott)\varphi$ at $z\in\Omega$ is denoted by $A^{(\ell)}(z)\varphi$. 
Under these assumptions we can extend the notion of Jordan chains of $\cB(\cK)$-valued analytic operator functions 
due to M. V. Keldysh from \cite{Ke51} (see also \cite[Sect.~II.11]{Ma88}).

\begin{definition}\lb{d3.1}
Suppose that $A(\dott)$ is a strongly analytic function defined on some open set $\Omega\subset\bbC$ with 
$\cD=\dom (A(z)) \subseteq \cK$, $z\in\Omega$ (i.e., for all $\varphi\in\cD$, $z\mapsto A(z)\varphi$ is 
analytic in $\Omega$), and let $\lambda_0 \in \Omega$. \\[1mm] 
$(i)$ Fix $k \in \bbN$ and $\varphi_0,\ldots,\varphi_{k-1} \in \cD$.
We say that the vectors $\{\varphi_0,\ldots,\varphi_{k-1}\}$
form a {\it Jordan chain of length $($or rank\,$)$ $k$} for the operator-valued function $A(\dott)$ at $\lambda_0$ if $\varphi_j\in\cD$, $j=0,\ldots,k-1$, satisfy
\begin{equation}
 \sum_{\ell = 0}^j\frac{1}{\ell!} A^{(\ell)}(\lambda_0)\varphi_{j-\ell}=0,\quad j=0,\dots,k-1,
\end{equation}
and 
\begin{equation} 
\varphi_0\not=0. 
\end{equation} 
The vector $\varphi_0 \in \ker(A(z_0))$ is called an {\it eigenvector} of the operator-valued function $A(\dott)$ 
at the {\it zero} (or, {\it characteristic value}) $\lambda_0$ and the vectors $\varphi_1,\ldots, \varphi_{k-1}$ are said to be {\it generalized eigenvectors} of $A(\dott)$ at $\lambda_0$. \\[1mm]
$(ii)$ The supremum of the length of a chain composed of an eigenvector $\varphi_0 \in \ker(A(\lambda_0))$ and the corresponding generalized eigenvectors of $A(\dott)$ at $\lambda_0$ is called the {\it algebraic multiplicity of $\mathbf \varphi_0$} and denoted by $m_a(\lambda_0; \varphi_0)$. \\[1mm]
$(iii)$ The {\it geometric multiplicity} of the zero $\lambda_0$ of $A(\dott)$, denoted by $m_g(0;A(\lambda_0))$, is defined to be 
\begin{equation} 
m_g(0; A(\lambda_0)) = \dim(\ker(A(\lambda_0))). 
\end{equation} 
$(iv)$ Suppose $\{\varphi_{0,n}\}_{1\leq n \leq N}$, $N \in \bbN \cup \{\infty\}$ represents a basis in 
$\ker(A(\lambda_0))$. Then, the {\it algebraic multiplicity} of the zero $\lambda_0$ of the analytic family $A(\dott)$, denoted by $m_a(\lambda_0; A(\dott))$, is defined via 
\begin{equation}
m_a(\lambda_0; A(\dott)) = \sum_{n=1}^N m_a(\lambda_0; \varphi_{0,n}). 
\end{equation}
\end{definition}

Assume that the vectors $\{\varphi_0,\ldots,\varphi_{k-1}\} \subset \cD$
form a Jordan chain for the operator-valued function $A(\dott)$ at $\lambda_0$. In contrast to a Jordan chain 
for an eigenvalue of an operator (as in the previous section) here some of the generalized eigenvectors 
$\varphi_1,\ldots, \varphi_{k-1}$ may be zero. It is also important to note that the restricted chain 
$\{\varphi_0,\ldots,\varphi_l\} \subset \cD$, $0\leq l\leq k-1$, is a Jordan chain of length $l+1$ for 
the operator-valued function $A(\dott)$ at $\lambda_0$, and it is also clear that the algebraic multiplicity 
$m_a(\lambda_0; \varphi_0)$ of the eigenvector $\varphi_0$
and the algebraic multiplicity $m_a(\lambda_0; A(\dott))$ of the zero $\lambda_0$ of $A(\dott)$ satisfy
\begin{equation}
m_a(\lambda_0; \varphi_0), m_a(\lambda_0; A(\dott)) \in \bbN \cup \{\infty\}.
\end{equation}
Furthermore, one can show (see, e.g., \cite[p.~57]{Ma88}) that that $m_a(\lambda_0; A(\dott))$ is independent of the basis chosen in $\ker(A(\lambda_0))$, rendering $m_a(\lambda_0; A(\dott))$ well-defined. By definition, 
\begin{equation}
m_g(0; A(\lambda_0)) \leq m_a(\lambda_0; A(\dott)).     \lb{2.6} 
\end{equation}

The next example shows that Definition~\ref{d3.1} is a natural generalization of the concept of Jordan chains of a linear operator (cf.\ \cite[Remark 11.2]{Ma88}).

\begin{example} \lb{e3.2} 
Let $A_0$ be a $($possibly unbounded\,$)$ operator $A$ on $\cD=\dom(A)$ in $\cK$ and consider
the special case of the $($linear\,$)$ operator-valued pencil $B(z)=A - z I_{\cK}$, where $z\in\Omega=\bbC$ and $\cD=\dom (B(z))$. Then it follows from Definition~\ref{d3.1} that 
$\{\varphi_0,\ldots,\varphi_{k-1}\} \subset \cD = \dom(A)$
is a Jordan chain of length $k$ for the function $B(\dott)$ at $\lambda_0\in\bbC$ if and only if $\lambda_0$ is an eigenvalue of $A$ with corresponding eigenvector 
$\varphi_0\not=0$ and \eqref{2.1a} holds.
Furthermore, the algebraic multiplicity $m_a(\lambda_0; A)$ of the eigenvalue $\lambda_0$ of the operator $A$ coincides with the algebraic multiplicity $m_a(\lambda_0; B(\dott))$ of the zero $\lambda_0$ of the operator-valued pencil $B(\dott)$, that is, 
\begin{equation}
m_a(\lambda_0; A) = m_a(\lambda_0; B(\dott)).
\end{equation}
\end{example}

Assume that $A(\dott)$ is a strongly analytic function defined on some open set $\Omega\subset\bbC$ with 
$\cD=\dom (A(z)) \subseteq \cK$, $z\in\Omega$.
Next, we turn to an equivalent definition of Jordan chains (see, \cite[App.~A]{KM99} and \cite[Sect.~11.2]{Ma88} in the context of bounded analytic families). 
Given $k \in \bbN$ and $\varphi_0,\ldots,\varphi_{k-1} \in \cD$, 
with $\varphi_0$ an eigenvector of $A(\dott)$ corresponding to the zero $\lambda_0 \in \Omega$, 
introduce the vector function
\begin{equation}
\phi(z) = \sum_{j=0}^{k-1} \f{1}{(z- \lambda_0)^{k-j}}\varphi_j.   \lb{2.9}
\end{equation}
One verifies that
\begin{equation}
A(z) \phi(z) \underset{|z - \lambda_0| \downarrow 0}{=} \Oh(1)     \lb{2.10} 
\end{equation}
(in the norm of $\cK$) if and only if $\{\varphi_0,\dots,\varphi_{k-1}\} \subset \cD$ is a Jordan chain of length $k$ for $A(\dott)$. This is a consequence of the fact that
\begin{align}
\begin{split} 
A(z) \phi(z) &= \sum_{j=0}^{\infty} \f{1}{j!} A^{(j)}(\lambda_0) (z-\lambda_0)^j \sum_{\ell=0}^{k-1} \f{1}{(z-\lambda_0)^{k-\ell}} 
\, \varphi_{\ell}   \lb{2.10A} \\
&= \sum_{\ell=0}^{k-1} \f{1}{(z-\lambda_0)^{k-\ell}} \sum_{j=0}^{\ell} \f{1}{j!} A^{(j)}(\lambda_0) \varphi_{\ell-j} + \Oh(1) 
\end{split}
\end{align} 
(here the Cauchy product of two series was invoked to obtain the second equality in \eqref{2.10A}).

Thus, introducing the generalized nullspace associated with the zero $\lambda_0 \in \Omega$ of the strongly 
analytic family $A(\dott)$ via 
\begin{equation}
\cN(\lambda_0; A(\dott)) = \{\text{$\phi$ of the form \eqref{2.9}}\, | \, \phi \text{ satisfies \eqref{2.10}}\},
\end{equation}
the algebraic multiplicity of the zero $\lambda_0$ of $A(\dott)$ equals
\begin{equation}\lb{alg=}
m_a(\lambda_0; A(\dott)) = \dim (\cN(\lambda_0; A(\dott))). 
\end{equation}
The equality \eqref{alg=} is verified in \cite[App.~A]{KM99} in the context of bounded operator functions, but the arguments remain valid in the
slightly more general situation treated here. The main observation to justify \eqref{alg=} is the fact that a Jordan chain 
$\{\varphi_0,\dots,\varphi_{k-1}\} \subset \cD$ of length $k$
for $A(\dott)$ gives rise to the $k$ linearly independent functions
\begin{equation}
 \phi_0(z)=\frac{\varphi_0}{z-\lambda_0},\,\,\phi_1(z)=\frac{\varphi_0}{(z-\lambda_0)^2} 
 + \frac{\varphi_1}{z-\lambda_0},\dots,
 \phi_{k-1}(z) = \sum_{j=0}^{k-1} \f{\varphi_j}{(z- \lambda_0)^{k-j}}
\end{equation}
in $\cN(\lambda_0; A(\dott))$. Note, in particular, that $\varphi_0 \in \ker(A(\lambda_0))$ implies 
$\phi_0 \in \cN(\lambda_0; A(\dott))$, and hence one 
again infers the inequality \eqref{2.6} between geometric and algebraic multiplicities of the zero $\lambda_0$ of $A(\dott)$. 

For future purpose it will be useful to employ the notion of local equivalence of two (strongly analytic) 
operator-valued functions: Let $\cK_j$, $j=1,2$, be Hilbert spaces, let $\Omega \subseteq \bbC$ be open and consider the operator-valued functions 
$z\mapsto A_j(z)$, $j=1,2$, for $z \in \Omega$, with $\dom (A_j(z))=\cD_j$.
Then $A_1(\dott)$ and $A_2(\dott)$ are called {\it locally equivalent} at some point $z_0 \in \Omega$, 
if there exist analytic operator-valued functions $z \mapsto E_1(z) \in \cB(\cK_1,\cK_2)$ and $z \mapsto E_2(z) \in \cB(\cK_2,\cK_1)$ in some open neighborhood 
$\Upsilon(z_0) \subseteq \Omega$ of $z_0$ such that $E_1(z)^{-1} \in \cB(\cK_2,\cK_1)$ and $E_2(z)^{-1} \in \cB(\cK_1,\cK_2)$ and
\begin{equation}
E_2(z)\cD_2=\cD_1\quad\text{and}\quad A_2(z) = E_1(z) A_1(z) E_2(z), \quad z \in \Upsilon(z_0). 
\end{equation}
If, in addition, $A_j(\dott)$, $j=1,2$, are strongly analytic, then one verifies in the same way as in the context of bounded operator functions (see \cite[Proposition A.5.1]{KM99}) that $\lambda_0$ is a zero of $A_1(\dott)$ if and only if $\lambda_0$ is a zero of $A_2(\dott)$, that 
\begin{equation} 
m_a(\lambda_0;A_1(\dott)) < \infty \, \text{ if and only if } \, m_a(\lambda_0;A_2(\dott)) < \infty,     \lb{2.15} 
\end{equation} 
and if one of these numbers in \eqref{2.15} is finite, the algebraic multiplicities of the zero 
$\lambda_0$ of $A_j(\dott)$, $j=1,2$, coincide, and the same applies to the corresponding geometric multiplicities. 

Specializing first to the finite-dimensional situation we now recall the following result (see 
\cite[Theorem~1.1.3]{BGR90}, \cite{GKL78}, \cite[Sect.~1.6]{GLR82}, \cite[Sect.~4.3]{GL09}, \cite[p.~607]{GS71}, 
\cite[Sect.~A.6]{KM99}, \cite[Sect.~7.5]{LT85}):

\begin{lemma} \lb{lem3.3}
Suppose $N \in \bbN$ and $\lambda_0 \in \bbC$. If $A(\dott) \in \bbC^{N \times N}$ is an $N \times N$ matrix with complex-valued entries analytic at $\lambda_0$, then $A(\dott)$ is locally equivalent to an $N \times N$ diagonal matrix $D(\dott)$ of the $($Smith\,$)$ form
\begin{equation}
D(z) = \diag (\underbrace{0,\dots,0}_{p\, \text{entries}}, (z-\lambda_0)^{\mu_{p+1}}, \dots, (z-\lambda_0)^{\mu_{q}}, 1\dots,1), \quad 0 \leq p \leq q \leq N,
\end{equation} 
for $z$ in an open neighborhood $\Upsilon(z_0) \subseteq \bbC$ of $z_0$, with $\mu_{s} \in \bbN$, 
$s \in \{p+1,\dots, q\}$, where
\begin{equation}
1 \leq \mu_{p+1} \leq \cdots \leq \mu_{q} < \infty. 
\end{equation}
In particular, 
\begin{equation}
m_a (\lambda_0; A(\dott)) = \infty \, \text{ if and only if } \, p \geq 1
\end{equation} 
$($equivalently, if and only if ${\det}_{\bbC^N} (A_N (\dott))\equiv 0$$)$. 
Thus, $m_a (\lambda_0; A(\dott)) < \infty$ if and only if $p=0$, in which case
\begin{equation}
m_a (\lambda_0; A(\dott)) = \sum_{s = 1}^q \mu_s, 
\end{equation}
$($equivalently, $m_a (\lambda_0; A(\dott))$ equals the order of the zero of ${\det}_{\bbC^N} (A(\dott))$ at
$\lambda_0$$)$. 
\end{lemma}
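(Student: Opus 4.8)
The plan is to reduce $A(\dott)$ to the Smith form by a finite sequence of elementary row and column operations performed over the local ring $\cO_{\lambda_0}$ of germs of holomorphic functions at $\lambda_0$, and then to read off both multiplicities from the resulting diagonal family, invoking the material developed just before the lemma --- in particular the fact that locally equivalent strongly analytic operator-valued functions have the same algebraic (and geometric) multiplicity at $\lambda_0$, and that finiteness is preserved, cf.\ \eqref{2.15}. In the present finite-dimensional situation ``strongly analytic'' simply means that all entries of $A(\dott)$ are analytic at $\lambda_0$ with $\dom(A(z)) = \bbC^N$ for all $z$, so that framework applies verbatim.

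The one algebraic input I would record is that $\cO_{\lambda_0}$ is a discrete valuation ring: every nonzero germ $f$ factors uniquely as $f(z) = (z-\lambda_0)^{\mathrm{ord}(f)} u(z)$ with $u(\lambda_0) \neq 0$, so that $f \mid g$ in $\cO_{\lambda_0}$ precisely when $\mathrm{ord}(f) \leq \mathrm{ord}(g)$. With this in hand I would run Gaussian elimination. If $A(\dott) \equiv 0$ there is nothing to prove; otherwise one picks an entry of least order $\mu_1$ among all entries of $A(\dott)$, moves it to the $(1,1)$ slot by interchanging two rows and two columns (constant permutation matrices), and multiplies the first row by the analytic unit $u^{-1}$ so the corner entry becomes exactly $(z-\lambda_0)^{\mu_1}$. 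Since, by minimality of $\mu_1$, this entry divides every entry of the first row and first column, the operations $R_k \mapsto R_k - a_{k1} a_{11}^{-1} R_1$ and $C_k \mapsto C_k - a_{1k} a_{11}^{-1} C_1$, $k \geq 2$, are given by analytic unimodular matrices and clear that row and column, leaving an $(N-1) \times (N-1)$ block all of whose entries have order $\geq \mu_1$. Iterating on this block --- the process terminates after at most $N$ steps, possibly leaving an identically zero block --- then normalizing the order-zero diagonal entries to $1$, and finally rearranging the diagonal entries by a constant permutation matrix into the stated layout, produces exactly $D(\dott)$; the ordering $1 \leq \mu_{p+1} \leq \cdots \leq \mu_q$ is automatic from the successive minimality. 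Collecting all row operations into $E_1(\dott)$ and all column operations into $E_2(\dott)$ --- each a finite product of analytic matrices invertible with analytic inverse on a common open neighborhood of $\lambda_0$, namely the finite intersection of the neighborhoods on which the various units are nonvanishing --- gives $D(z) = E_1(z) A(z) E_2(z)$, i.e., $A(\dott)$ and $D(\dott)$ are locally equivalent at $\lambda_0$.

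It then remains to compute the multiplicities of the diagonal family $D(\dott)$ and transport them back. Since $D(\dott)$ is diagonal, the Jordan-chain relations of Definition~\ref{d3.1} decouple into $N$ scalar problems for the entries $d_k(\dott)$. An entry $d_k \equiv 1$ is nonzero at $\lambda_0$ and contributes nothing; for $d_k(z) = (z-\lambda_0)^{\mu_k}$ with $\mu_k \geq 1$ one has $d_k^{(\ell)}(\lambda_0) = 0$ for $\ell \neq \mu_k$ and $d_k^{(\mu_k)}(\lambda_0) \neq 0$, so a chain may be continued freely through the indices $j = 0, \dots, \mu_k - 1$ (the corresponding sums in Definition~\ref{d3.1}$(i)$ are empty) while the relation at $j = \mu_k$ forces the leading component to vanish, whence the maximal chain length is exactly $\mu_k$; and for $d_k \equiv 0$ every finite chain is admissible, so the algebraic multiplicity there is $\infty$. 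Taking as a basis of $\ker(D(\lambda_0))$ the standard basis vectors attached to the vanishing diagonal entries and summing, one gets $m_a(\lambda_0; D(\dott)) = \infty$ when $p \geq 1$ and $m_a(\lambda_0; D(\dott)) = \sum_{s=1}^{q} \mu_s$ when $p = 0$ (and, as a byproduct, $m_g(0; D(\lambda_0))$ equals the number of vanishing diagonal entries). By the local-equivalence invariance recalled above, the same values hold for $A(\dott)$. Finally, from $D(z) = E_1(z) A(z) E_2(z)$ one obtains ${\det}_{\bbC^N}(A(z)) = (\det E_1(z))^{-1} (\det E_2(z))^{-1} \prod_{s=1}^{q} (z-\lambda_0)^{\mu_s}$ with $\det E_j(\lambda_0) \neq 0$, so ${\det}_{\bbC^N}(A(\dott)) \equiv 0$ exactly when some $d_k \equiv 0$, that is, exactly when $p \geq 1$, and when $p = 0$ the order of the zero of ${\det}_{\bbC^N}(A(\dott))$ at $\lambda_0$ equals $\sum_{s=1}^{q} \mu_s = m_a(\lambda_0; A(\dott))$; this yields all the asserted identities.

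The only genuinely delicate point --- and the one I would be most careful about --- is not deep: it is verifying that the finitely many elementary transformations and their inverses are simultaneously holomorphic and invertible on one fixed open neighborhood of $\lambda_0$, together with the observation that in the clearing step the pivot really divides the rest of its row and column as germs, which is precisely where the minimality of the order and the discrete-valuation structure of $\cO_{\lambda_0}$ enter. Everything else is bookkeeping, and the reduction to the scalar case in the last step is immediate from the diagonal structure.
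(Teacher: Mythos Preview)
Your proof is correct, but there is nothing to compare it to: the paper does not prove Lemma~\ref{lem3.3}. It merely \emph{recalls} the result from the literature, citing \cite[Theorem~1.1.3]{BGR90}, \cite{GKL78}, \cite[Sect.~1.6]{GLR82}, \cite[Sect.~4.3]{GL09}, \cite[p.~607]{GS71}, \cite[Sect.~A.6]{KM99}, \cite[Sect.~7.5]{LT85}, and then proceeds directly to illustrative examples.

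Your argument is the standard one and is well executed: Smith normal form over the discrete valuation ring $\cO_{\lambda_0}$ via Gaussian elimination, followed by the (easy) computation of chain lengths for the diagonal family and transport via the local-equivalence invariance \eqref{2.15}. The points you flag as delicate --- that the elementary transformations and their inverses are holomorphic on a common neighborhood, and that the pivot divides its row and column as germs --- are exactly the right ones, and your justification that the cleared $(N-1)\times(N-1)$ block retains entries of order $\geq \mu_1$ is what guarantees the ordering $\mu_{p+1}\leq\cdots\leq\mu_q$. The determinant identifications at the end are immediate from $D = E_1 A E_2$ with $\det E_j(\lambda_0)\neq 0$.
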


The following elementary illustrations underscore some aspects of Lemma~\ref{lem3.3}:

\begin{example} \lb{e3.4}
Consider $N=3$, $\lambda_0 = 0$, and introduce
\begin{equation}
A_1(z) = \diag (z,1,1), \quad z \in \bbC,
\end{equation}
Then $A_1(0) = \diag (0,1,1)$, $A'_1(0) = \diag (1,0,0)$,  $A_1^{(l)}(0) = 0_3$, $l\geq 2$, and 
\begin{equation}
\ker (A_1(0)) = \linspan \{\varphi_0\}, \quad \varphi_0 = (1,0, 0)^{\top},
\end{equation}
and $A_1(0) \varphi_1 + A'_1(0) \varphi_0 = 0$ yields the contradiction $\varphi_0 = 0$, implying the absence of a chain beyond the eigenvector $\varphi_0$. Thus, 
\begin{equation}
m_g(0; A_1(0)) = 1,\quad m_a(0;\varphi_0) = 1,\quad m_a(0; A_1(\dott))=1.    \lb{3.21}
\end{equation} 
\end{example}

\begin{example} \lb{e3.5}
Consider $N=3$, $\lambda_0 = 0$, and introduce
\begin{equation}
A_{\infty}(z) = \diag (0,z,1), \quad z \in \bbC,
\end{equation}
Then $A_\infty(0) = \diag (0,0,1)$, $A'_\infty(0) = \diag (0,1,0)$,  $A^{(l)}_\infty(0) = 0_3$, $l\geq 2$, and 
\begin{equation}
 \ker (A_{\infty}(0)) = \linspan \{\varphi_{0,1}, \varphi_{0,2}\}, \quad \, \varphi_{0,1} = (1, 0,0)^{\top}, 
\,\, \varphi_{0,2} = (0,1, 0)^{\top}.
\end{equation}
By inspection, the eigenvector $\varphi_{0,1}$ has an associated chain of length $\infty$ since 
$A_{\infty}(0) \varphi_1 + A'_{\infty}(0) \varphi_{0,1} =0$, and 
$A_{\infty}(0) \varphi_{j+1} + A'_{\infty}(0) \varphi_j =0$, $j \in \bbN$, yields the $($infinite\,$)$ chain, 
\begin{align} 
\begin{split} 
&\varphi_{0,1} = (1, 0,0)^{\top}, \; \varphi_1 = (c_{1,1}, 0,0)^{\top}, 
\; \varphi_2 = (c_{2,1}, 0,0)^{\top}, \dots ,      \\
& \quad \dots, \varphi_j = (c_{j,1}, 0,0)^{\top}, 
\; \varphi_{j+1} = (c_{j+1,1}, 0,0)^{\top}, \dots,  \quad c_{k,1} \in \bbC, \; k \in \bbN.   
\end{split} 
\end{align}
However, as in Example~\ref{e3.4} the equation
$A_{\infty}(0) \varphi_1 + A'_{\infty}(0) \varphi_{0,2} = 0$ yields again the contradiction $\varphi_{0,2} = 0$, implying the absence of a
chain beyond the eigenvector $\varphi_{0,2}$. Thus, 
\begin{equation}
m_g(0; A_{\infty}(0)) = 2, \quad m_a(0; \varphi_{0,1}) = \infty, 
\quad m_a(0; \varphi_{0,2}) = 1, \quad  m_a(0; A_{\infty}(\dott)) = \infty,
\end{equation}
in accordance with ${\det}_{\bbC^3} (A_{\infty}(\dott)) \equiv 0$. 
\end{example}

\begin{example} \lb{e3.6}
Consider $N=3$, $\lambda_0 = 0$, and introduce
\begin{equation}
A_2(z) = \diag \big(z,z^2,1\big), \quad z \in \bbC. 
\end{equation}
Thus, $A_2(0) = \diag (0,0,1)$, $A'_2(0) = \diag (1,0,0)$,  $[2 !]^{-1}A''_2(0) = \diag (0,1,0)$, $A_2^{(l)}(0) = 0_3$, $l\geq 3$, and 
\begin{equation}
\ker (A_2(0)) = \linspan \{\varphi_{0,1}, \varphi_{0,2}\}, \quad \, \varphi_{0,1} = (1,0, 0)^{\top}, \,\, \varphi_{0,2} = (0,1,0)^{\top}.
\end{equation}
As in the previous examples the equation $A_2(0) \varphi_1 + A_2'(0) \varphi_{0,1} = 0$ leads to the contradiction $\varphi_{0,1} = 0$.
Hence there is no chain beyond the eigenvector $\varphi_{0,1}$.
Similarly, $A_2(0) \varphi_1 + A_2'(0) \varphi_{0,2} = 0$, with $\varphi_1= (c_{1,1}, c_{1,2}, c_{1,3})^{\top}$ implies $c_{1,3} = 0$, 
and thus yields the chain $($with $c_{1,1}, c_{1,2} \in \bbC$$)$
\begin{equation} 
\varphi_{0,2} = (0,1,0)^{\top}, \; \varphi_1 = (c_{1,1}, c_{1,2},0)^{\top}.  
\end{equation} 
Next, studying $A_2(0) \varphi_2 + A_2'(0) \varphi_1 + [2!]^{-1} A_2''(0)\varphi_{0,2} = 0$, with some  
$\varphi_2$ yields the contradiction 
$\varphi_{0,2}= 0$. Thus, there exists no generalized eigenvector $\varphi_2$ and hence no chain of length $\geq 3$. Summing up, 
\begin{equation}
m_g(0; A_2(0)) = 2, 
 \quad m_a(0;\varphi_{0,1}) = 1, \quad m_a(0;\varphi_{0,2}) = 2,
\quad  m_a(0; A_2(\dott)) = 3.      \lb{3.29}
\end{equation} 
\end{example}

\begin{example} \lb{e3.7}Consider $N=3$, $\lambda_0 = 0$, and introduce
\begin{equation}
A_k(z) = \diag \big(z,z^k,1\big), \quad z \in \bbC,\,\, k\geq 3.
\end{equation}
Then $A_k(0) = \diag (0,0,1)$, $A'_k(0) = \diag (1,0,0)$, $[k !]^{-1}A^{(k)}_k(0) = \diag (0,1,0)$, 
$A^{(l)}_k(0) = 0_3$ for $l\geq 2$, $l\not=k$, and 
\begin{equation}
\ker (A_k(0)) = \linspan \{\varphi_{0,1}, \varphi_{0,2}\}, \quad \, \varphi_{0,1} = (1,0, 0)^{\top}, \,\, \varphi_{0,2} = (0,1,0)^{\top}.
\end{equation}
In the same way as in the previous examples one verifies that there is no chain beyond the eigenvector $\varphi_{0,1}$
and a chain of maximal length $k$ beyond the eigenvector $\varphi_{0,2}$. This leads to
\begin{equation}
m_g(0; A_k(0)) = 2, 
 \quad m_a(0;\varphi_{0,1}) = 1, \quad m_a(0;\varphi_{0,2}) = k,
\quad  m_a(0; A_k(\dott)) = k+1.     \lb{3.32} 
\end{equation} 
\end{example}

In the general finite-dimensional situation one obtains the following:

\begin{example} \lb{e3.8}Consider $N \in \bbN$, $\lambda_0 = 0$, and introduce 
\begin{equation}
A_{k_1, \dots, k_N}(z) = \diag \big(z^{k_1},\dots,z^{k_N},1\big), \quad 1 \leq k_j \leq k_{j+1}, \; 1 \leq j \leq N-1. 
\end{equation}
Then $A_{k_1, \dots, k_N}(0) = \diag (0,\dots,0,1)$,
\begin{align}
\begin{split}
& \ker(A_{k_1,\dots,k_N}(0)) = \linspan \{\varphi_{0,j}\}_{1 \leq j \leq N},    \\
& \, \varphi_{0,j} = (0,\dots,0,\underbrace{1}_{j},0,\dots0)^{\top}, \quad  1 \leq j \leq N.
\end{split} 
\end{align}
Each term $z^{k_j}$ in $A_{k_1, \dots, k_N}(z)$ then leads to a Jordan chain of $($maximal\,$)$ length $k_j$ since
\begin{align}
 [k_j!]^{-1}A^{(k_j)}_{k_1,\dots,k_N}(0) & = \diag (0,\dots,0,\underbrace{1,\dots,\underbrace{1}_{j},\dots,1},0,\dots,0),   \lb{2.39} \\
 [\ell !]^{-1}A^{(\ell)}_{k_1,\dots,k_N}(0) &= \diag (\ast,\dots,\ast,\underbrace{0,\dots,\underbrace{0}_{j},\dots,0},0,\dots,0), \quad 1 \leq \ell \leq k_j -1,   \lb{2.40}
\end{align}
where the larger underbraced part $\underbrace{\dots}$ characterizes all those $j' \in \{1,\dots,N\}$ such that 
$k_{j'} = k_{j}$ and $\ast$ stands for $0$ or $1$. Because of \eqref{2.39} and \eqref{2.40}, the equation 
\begin{equation}
A_{k_1,\dots,k_N}(0) \varphi_{k_j} + A_{k_1,\dots,k_N}'(0) \varphi_{k_j - 1} + \dots + [k_j!]^{-1} A_{k_1,\dots,k_N}^{(k_j)}(0) \varphi_{0,j} =0 
\end{equation} 
implies the contradiction $\varphi_{0,j} = 0$ and hence no chain of length $k_j + 1$ 
$($containing $\varphi_{k_j}$$)$ exists. The explicit form of $A^{(k_j)}_{k_1,\dots,k_N}(0)$ in \eqref{2.39} then shows that 
\begin{equation}
\text{the Jordan chain $\varphi_{0,j}, \varphi_1, \dots , \varphi_{k_j - 1}$ of $($maximal\,$)$ length $k_j$ exists.}
\end{equation}
Thus,
\begin{equation}
m_g(0;A_{k_1,\dots,k_N}(\dott)) = N, \quad m_a(0;\varphi_{0,j}) = k_j, \quad m_a(0; A_{k_1,\dots,k_N}(\dott)) = \sum_{j=1}^N k_j.    \lb{3.39} 
\end{equation}
\end{example}

To extend the finite-dimensional situation described in Lemma~\ref{lem3.3} to the infinite-dimensional case, we next recall the notion of a zero of finite-type of a bounded analytic function following \cite[Sects.\ XI.8, XI.9]{GGK90}, \cite{GS71} (see also \cite{GHN15}, \cite[Ch.\ 4]{GL09}). 

\begin{definition} \lb{d3.9}
Let $\Omega \subseteq \bbC$ be open, $\lambda_0 \in \Omega$, and suppose that 
$A:\Omega \to \cB(\cH)$ is analytic on $\Omega$. Then $\lambda_0$ is called a {\it zero of finite-type 
of $A(\dott)$} if $A(\lambda_0)$ is a Fredholm operator, $\ker(A(\lambda_0)) \neq \{0\}$, and $A(\dott)$ is 
boundedly invertible on $D(\lambda_0; \varepsilon_0) \backslash \{z_0\}$, for some sufficiently small 
$\varepsilon_0 > 0$. 
\end{definition}

In particular, the hypotheses imposed in Definition~\ref{d3.9} imply that $A(z)$ is Fredholm for all 
$z \in D(\lambda_0; \varepsilon_0)$. 
In the context of bounded analytic operator-valued functions we also recall that the notions of weakly analytic, strongly analytic, and norm analytic families are all equivalent.

Combining various results in \cite{GS71} (in particular, p.~605, eqs.(1.1)--(1.3), (3.1)--(3.3), Lemma~2.1, Theorems~3.1, 3.2, and the last paragraph in the proof of Lemma~2.1 on p.~613) and \cite[Theorem\ XI.8.1]{GGK90} (see also \cite{GKL78}), one then obtains the following infinite-dimensional 
analog of Lemma~\ref{lem3.3}:

\begin{theorem} \lb{t3.10}
Assume that $A:\Omega \to \cB(\cH)$ is analytic on $\Omega$ and that
$\lambda_0 \in \Omega$ is a zero of finite-type of $A(\dott)$. 
Then
\begin{equation}
\ind (A(\lambda_0)) =\dim(\ker(A(\lambda_0))) - \dim(\ker(A(\lambda_0)^*))= 0,  
\end{equation}
and there exist $\varepsilon > 0$, analytic and boundedly invertible 
operator-valued functions $E_j: \Omega \to \cB(\cH)$, $j=1,2$, and mutually disjoint orthogonal projections $P_k$, $k=0,\ldots,r$, in $\cH$ 
with 
\begin{equation}
 \dim (\ran(P_j)) = 1, \quad 1 \leq j \leq r,\quad \bigoplus_{j=0}^r P_j = I_{\cH},
\end{equation}
and uniquely determined $\rho_1 \leq \rho_2 \leq \dots \leq \rho_r$, $\rho_j \in \bbN$, $j=1\ldots,r$,
such that 
\begin{equation}\lb{2.27}
A(z) = E_1(z) D(z) E_2(z), \quad z \in D(\lambda_0; \varepsilon), 
\end{equation}
where $D(\dott)$ admits the diagonal block operator form
\begin{equation}\lb{2.28}
D(z)=\left[\begin{matrix} (z - \lambda_0)^{\rho_1} & & & & \\ & (z - \lambda_0)^{\rho_2} & & & & 
\\ & & \ddots & & \\ & & & (z - \lambda_0)^{\rho_r} & 
            \\ & & & & I_{\ran (P_0)} \end{matrix}\right]
\end{equation}
with respect to the decomposition 
\begin{equation}
 \cH=\ran (P_1)\oplus\ran (P_2)\oplus\dots\oplus \ran (P_r)\oplus \ran (P_0).
\end{equation}
The geometric multiplicity $m_g(0; A(z_0))$ and the algebraic multiplicity $m_a(\lambda_0; A(\dott))$ of the zero of 
$A(\dott)$ at $\lambda_0$ are given by
\begin{equation} 
m_g(0; A(\lambda_0)) = \dim\big(\ran\big(I_{\cH} - P_0\big)\big) =r, \, \text{ and } \, m_a(\lambda_0; A(\dott)) = \sum_{j=1}^r \rho_j.
\end{equation}
\end{theorem}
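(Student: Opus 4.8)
The plan is to build the factorization \eqref{2.27}--\eqref{2.28} directly out of the finite-type hypothesis, following the strategy behind \cite[Theorem~XI.8.1]{GGK90} and \cite{GS71}. First I would record the index statement: since $\lambda_0$ is a zero of finite-type, $A(\lambda_0)$ is Fredholm and $A(z)$ is boundedly invertible on a punctured disk $D(\lambda_0;\varepsilon_0)\setminus\{\lambda_0\}$; in particular $A(z)$ is Fredholm of index $0$ there (invertible operators have index $0$), and by continuity of the Fredholm index on the connected set $D(\lambda_0;\varepsilon_0)$ we get $\ind(A(\lambda_0))=0$, hence $\dim(\ker(A(\lambda_0)))=\dim(\ker(A(\lambda_0)^*))=:r<\infty$. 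This also shows $A(z)$ is Fredholm throughout $D(\lambda_0;\varepsilon)$ for small $\varepsilon$.

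Next I would reduce to the kernel/cokernel. Let $P$ be the orthogonal projection onto $\ker(A(\lambda_0))$ and $Q$ the orthogonal projection onto $\ran(A(\lambda_0))$ (so $I_\cH-Q$ projects onto $\ker(A(\lambda_0)^*)$, of the same dimension $r$). Writing $\cH=\ran(P)\oplus\ker(P)$ and $\cH=\ker(Q)\oplus\ran(Q)$, the operator $A(\lambda_0)$ has a block form in which the $(\ker(P)\to\ran(Q))$ corner is invertible and all other corners vanish. For $z$ near $\lambda_0$, the $(\ker(P)\to\ran(Q))$ corner of $A(z)$ remains invertible, so by a standard Schur-complement (Gaussian elimination) argument there are analytic, boundedly invertible $F_1(z),F_2(z)\in\cB(\cH)$ on a neighborhood of $\lambda_0$ such that $F_1(z)A(z)F_2(z)$ is block-diagonal with the identity on $\ran(Q)$ (combined with $\ker(P)$) and an $r$-dimensional analytic block $\wti A(z)\in\cB(\ran(P),\ker(Q))$ with $\wti A(\lambda_0)=0$. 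Since $A(z)$ is invertible for $0<|z-\lambda_0|<\varepsilon$, so is $\wti A(z)$; identifying $\ran(P)$ and $\ker(Q)$ with $\bbC^r$ via fixed orthonormal bases, $\wti A(\dott)$ becomes an $r\times r$ matrix-valued analytic function with $\det_{\bbC^r}(\wti A(z))\not\equiv 0$ near $\lambda_0$.

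Now apply Lemma~\ref{lem3.3} (the Smith normal form in the finite-dimensional case) to $\wti A(\dott)$: since its determinant does not vanish identically, there are analytic boundedly invertible $r\times r$ matrix functions $G_1(\dott),G_2(\dott)$ and integers $1\le\rho_1\le\cdots\le\rho_r$ with $G_1(z)\wti A(z)G_2(z)=\diag\big((z-\lambda_0)^{\rho_1},\dots,(z-\lambda_0)^{\rho_r}\big)$; here every exponent is $\ge1$ because $\wti A(\lambda_0)=0$. Choosing $P_0=I_\cH-P$ and letting $P_1,\dots,P_r$ be the rank-one orthogonal projections onto the coordinate directions in $\ran(P)\cong\bbC^r$ (conjugated back by the chosen bases), and absorbing $G_1,G_2$ together with the reshuffling of $\ker(P)$ and $\ran(Q)$ into the final $E_1(z),E_2(z)$, one obtains \eqref{2.27} with $D(z)$ exactly of the block form \eqref{2.28}. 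Uniqueness of the $\rho_j$ follows from the invariance of the local Smith form (or from the formula below, which pins down $m_a$ and the individual exponents via the orders of vanishing of the invariant factors). Finally, for the multiplicities: local equivalence preserves both $m_g(0;A(\lambda_0))$ and $m_a(\lambda_0;A(\dott))$ (by \eqref{2.15} and the remark following it), so it suffices to read them off from the diagonal model $D(\dott)$. For the model, $\ker(D(\lambda_0))=\ran(I_\cH-P_0)$ is $r$-dimensional, giving $m_g(0;A(\lambda_0))=r$; and the block $(z-\lambda_0)^{\rho_j}$ on $\ran(P_j)$ contributes, exactly as in Example~\ref{e3.8}, a single Jordan chain of maximal length $\rho_j$ beyond its eigenvector, while $I_{\ran(P_0)}$ contributes nothing, so $m_a(\lambda_0;A(\dott))=\sum_{j=1}^r\rho_j$.

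The main obstacle is the middle step: passing rigorously from the bounded operator-valued function $A(\dott)$ to the finite matrix block $\wti A(\dott)$ with analytic, boundedly invertible equivalence factors, i.e. justifying the Schur-complement reduction with control of analyticity and invertibility of $F_1,F_2$ on a fixed neighborhood. This is precisely the content packaged in \cite[Theorem~XI.8.1]{GGK90} and \cite{GS71} (the "finite meromorphic Fredholm" or local equivalence to a matrix polynomial), so in the write-up I would invoke those references for the reduction and then supply the finite-dimensional Smith-form argument and the multiplicity bookkeeping, which are elementary given Lemma~\ref{lem3.3} and Example~\ref{e3.8}.
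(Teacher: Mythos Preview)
Your proposal is correct and follows precisely the route the paper has in mind: the paper does not supply its own proof of Theorem~\ref{t3.10} but simply cites \cite[Theorem~XI.8.1]{GGK90} and \cite{GS71} (together with \cite{GKL78}) for the result, and your sketch---continuity of the Fredholm index to get $\ind(A(\lambda_0))=0$, Schur-complement reduction to an $r\times r$ analytic matrix block, application of the local Smith form (Lemma~\ref{lem3.3}), and reading off multiplicities via local equivalence as in Example~\ref{e3.8}---is exactly the argument packaged in those references. The only cosmetic point worth tidying is that the source and target decompositions $\ran(P)\oplus\ker(P)$ and $\ker(Q)\oplus\ran(Q)$ do not coincide a priori, so one needs a fixed unitary identifying $\ker(Q)$ with $\ran(P)$ (and $\ran(Q)$ with $\ker(P)$) before the block-diagonal form lives on a single decomposition of $\cH$; you allude to this when you speak of ``reshuffling,'' but it is worth making explicit.
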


Without going into further details we emphasize that \cite{GS71} actually focuses on meromorphic operator-valued functions, not just the special analytic case.

\section{The Generalized Birman--Schwinger Principle and Jordan Chains} \lb{s4}

This section is devoted to the generalized Birman--Schwinger principle in connection with a pair of operators 
$(H_0, H)$ in a separable, complex Hilbert space $\cH$ that satisfy the following hypothesis.

\begin{hypothesis}\lb{h4.1}
Let $H_0$ be a closed operator in $\cH$ with $\rho(H_0)\not=\emptyset$, and assume that 
$V_1,V_2$ are $($possibly unbounded\,$)$ operators mapping from $\cH$ into an auxiliary Hilbert 
space $\cK$ such that $V_1$ is closed, and 
\begin{equation}
\dom (H_0) \subseteq \dom (V_2^*V_1),
\quad \text{ and } \, \overline{\dom (V_2)}=\cH.
\end{equation}
One then introduces 
\begin{equation}\lb{h}
 H=H_0+V_2^*V_1, \quad \dom(H) = \dom(H_0).
\end{equation}
\end{hypothesis} 

One notes that the operator $H$ in Hypothesis~\ref{h4.1} is not necessarily closed and 
that the case $\rho(H)=\emptyset$ is not excluded.
Since $V_1$ is assumed to be closed it follows from the closed graph theorem that 
$V_1(H_0-z I_{\cH})^{-1}\in\cB(\cH,\cK)$ holds for all $z\in\rho(H_0)$ and hence one can use 
$\cD = \dom(V_2^*) \subseteq \cK$ for the special operator-valued function
\begin{equation}
\rho(H_0) \ni z \mapsto  I_{\cK} + V_1(H_0 - z I_{\cH})^{-1} V_2^*.
\end{equation}  
In the following it will also be used that
\begin{equation}\lb{resodiff}
 \frac{d^{\ell}}{dz^{\ell}}(H_0-z I_{\cH})^{-1}= \ell!\,(H_0-z I_{\cH})^{-(\ell +1)}, \quad \ell\in\bbN, 
 \; z\in\rho(H_0), 
\end{equation} 
and that 
\begin{equation}\lb{taylor}
 (H_0-z I_{\cH})^{-1}=\sum_{s=0}^\infty (H_0-z_0 I_{\cH})^{-(s+1)}(z - z_0)^s
\end{equation}
for all $z $ in a neighbourhood of $z_0\in\rho(H_0)$.

\begin{lemma} \lb{l4.2}
Assume Hypothesis~\ref{h4.1}. If $\varphi \in \dom (V_2^*)$, then the map
\begin{equation}\lb{lokm}
\rho(H_0) \ni z\mapsto  \big[I_{\cK} + V_1(H_0-z I_{\cH})^{-1}V_2^*\big] \varphi \in \cK
\end{equation}
is analytic and 
\begin{equation}\lb{derivatives}
 \frac{d^{\ell}}{dz^{\ell}} V_1(H_0-z I_{\cH})^{-1}V_2^*  \varphi=\ell! V_1(H_0-z I_{\cH})^{-(\ell +1)}V_2^* \varphi,\quad \ell\in\bbN.
\end{equation}
\end{lemma}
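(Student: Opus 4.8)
The plan is to establish analyticity of the map in \eqref{lokm} and the formula \eqref{derivatives} by working at the level of fixed vectors and exploiting the Neumann-type expansion \eqref{taylor} for the resolvent of $H_0$. First I would fix $z_0 \in \rho(H_0)$ and $\varphi \in \dom(V_2^*)$, and observe that since $V_1$ is closed with $\dom(H_0) \subseteq \dom(V_1)$ (as $V_2^* V_1$ makes sense on $\dom(H_0)$ and, more to the point, $V_1(H_0 - z I_{\cH})^{-1} \in \cB(\cH,\cK)$ by the closed graph theorem as already noted before the lemma), the operator $V_1(H_0 - z I_{\cH})^{-1}$ is bounded for each $z \in \rho(H_0)$. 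The key step is to insert \eqref{taylor}, namely $(H_0 - z I_{\cH})^{-1} = \sum_{s=0}^\infty (H_0 - z_0 I_{\cH})^{-(s+1)} (z-z_0)^s$, valid in norm in a neighbourhood of $z_0$, apply it to the vector $V_2^* \varphi \in \cH$, and then apply the \emph{closed} operator $V_1$ to the resulting norm-convergent series.

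The main obstacle, and the point requiring care, is precisely the interchange of the closed (possibly unbounded) operator $V_1$ with the infinite sum: closedness alone does not let one pull $V_1$ through an infinite series unless one knows both the series $\sum_s (H_0 - z_0 I_{\cH})^{-(s+1)} (z-z_0)^s V_2^* \varphi$ and the series $\sum_s V_1 (H_0 - z_0 I_{\cH})^{-(s+1)} (z-z_0)^s V_2^* \varphi$ converge. The first converges in $\cH$ by \eqref{taylor}; for the second I would argue that $V_1 (H_0 - z_0 I_{\cH})^{-1} \in \cB(\cH,\cK)$ and $(H_0 - z_0 I_{\cH})^{-(s+1)} = (H_0 - z_0 I_{\cH})^{-1} (H_0 - z_0 I_{\cH})^{-s}$, so that
\begin{equation}
V_1 (H_0 - z_0 I_{\cH})^{-(s+1)} V_2^* \varphi = \big[V_1 (H_0 - z_0 I_{\cH})^{-1}\big] (H_0 - z_0 I_{\cH})^{-s} V_2^* \varphi,
\end{equation}
and hence $\big\| V_1 (H_0 - z_0 I_{\cH})^{-(s+1)} V_2^* \varphi \big\|_{\cK} \leq \big\| V_1 (H_0 - z_0 I_{\cH})^{-1} \big\|_{\cB(\cH,\cK)} \, \big\| (H_0 - z_0 I_{\cH})^{-1} \big\|_{\cB(\cH)}^{s} \, \big\| V_2^* \varphi \big\|_{\cH}$, which gives convergence of the $\cK$-valued series for $|z - z_0|$ smaller than $\big\| (H_0 - z_0 I_{\cH})^{-1} \big\|_{\cB(\cH)}^{-1}$. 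With both series convergent and $V_1$ closed, one concludes
\begin{equation}
V_1 (H_0 - z I_{\cH})^{-1} V_2^* \varphi = \sum_{s=0}^\infty V_1 (H_0 - z_0 I_{\cH})^{-(s+1)} V_2^* \varphi \, (z - z_0)^s,
\end{equation}
which exhibits $z \mapsto V_1(H_0 - z I_{\cH})^{-1} V_2^* \varphi$ as a convergent $\cK$-valued power series about $z_0$, hence analytic there; adding the constant $\varphi$ does not affect analyticity, so \eqref{lokm} is analytic on $\rho(H_0)$.

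Finally, \eqref{derivatives} follows by identifying Taylor coefficients: from the power series just displayed, the $\ell$-th derivative at $z_0$ equals $\ell!$ times the coefficient of $(z-z_0)^\ell$, that is,
\begin{equation}
\frac{d^{\ell}}{dz^{\ell}} V_1 (H_0 - z I_{\cH})^{-1} V_2^* \varphi \Big|_{z = z_0} = \ell! \, V_1 (H_0 - z_0 I_{\cH})^{-(\ell+1)} V_2^* \varphi,
\end{equation}
and since $z_0 \in \rho(H_0)$ was arbitrary this is exactly \eqref{derivatives}. Alternatively, once analyticity is in hand one may obtain \eqref{derivatives} more directly by writing, for $z$ near $z_0$, the resolvent identity $(H_0 - z I_{\cH})^{-1} - (H_0 - z_0 I_{\cH})^{-1} = (z - z_0)(H_0 - z I_{\cH})^{-1}(H_0 - z_0 I_{\cH})^{-1}$, applying the bounded operator $V_1 (H_0 - z_0 I_{\cH})^{-1}$ on the left after one more use of the closedness argument, and differentiating under the established analyticity together with \eqref{resodiff}; I expect the power-series route to be the cleanest to write out.
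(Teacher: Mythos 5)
Your proof is correct, but it takes a genuinely different route from the paper's. The paper works directly at the level of the difference quotient: it applies the first resolvent identity to rewrite
$\frac{1}{z-z_0}\big[V_1(H_0-zI_\cH)^{-1}V_2^*-V_1(H_0-z_0I_\cH)^{-1}V_2^*\big]\varphi
= V_1(H_0-z_0I_\cH)^{-1}(H_0-zI_\cH)^{-1}V_2^*\varphi$,
and then passes to the limit $z\to z_0$ using only the boundedness of $V_1(H_0-z_0I_\cH)^{-1}$, after which \eqref{derivatives} is obtained by an analogous induction. You instead insert the full Taylor expansion \eqref{taylor}, push $V_1$ through the resulting infinite series, and read \eqref{lokm} and \eqref{derivatives} off from the coefficients. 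Both approaches are valid; yours yields all derivatives at once, at the cost of a series-interchange argument, whereas the paper's keeps the computation to a single resolvent identity per derivative.

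One further observation: the invocation of closedness of $V_1$ in the interchange step, while correct, is heavier than necessary. Once you observe $(H_0-z_0I_\cH)^{-(s+1)} = (H_0-z_0I_\cH)^{-1}(H_0-z_0I_\cH)^{-s}$, the norm-convergent $\cH$-valued series $\sum_{s}(H_0-z_0I_\cH)^{-s}V_2^*\varphi\,(z-z_0)^s$ can be hit with the single bounded operator $V_1(H_0-z_0I_\cH)^{-1}\in\cB(\cH,\cK)$, which passes through an infinite sum purely by continuity; this gives exactly
$V_1(H_0-zI_\cH)^{-1}V_2^*\varphi=\sum_{s=0}^\infty V_1(H_0-z_0I_\cH)^{-(s+1)}V_2^*\varphi\,(z-z_0)^s$
with no appeal to the closed-graph property beyond the already-established membership $V_1(H_0-z_0I_\cH)^{-1}\in\cB(\cH,\cK)$. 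Either way the argument is sound, and your geometric-series estimate for the coefficients correctly pins down the radius of convergence.
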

\begin{proof}
In fact, since $V_1(H_0-z_0 I_{\cH})^{-1}\in\cB(\cH,\cK)$ for $z_0\in\rho(H_0)$ one has
\begin{equation}
\begin{split}
&\lim_{z \rightarrow z_0} \frac{1}{z - z_0} 
\big[V_1(H_0-z I_{\cH})^{-1}V_2^* - V_1(H_0-z_0 I_{\cH})^{-1}V_2^*\big]\varphi\\
&\quad =  \lim_{z \rightarrow z_0} V_1(H_0-z_0 I_{\cH})^{-1}(H_0-z I_{\cH})^{-1}V_2^*\varphi\\
&\quad =  V_1(H_0-z_0 I_{\cH})^{-2}V_2^*\varphi, \quad \varphi\in\dom (V_2^*). 
\end{split} 
\end{equation}
Hence the function \eqref{lokm} is analytic from $\rho(H_0)$ into $\cK$.
In the same manner, making use of \eqref{resodiff} and $V_1(H_0-z_0 I_{\cH})^{-1}\in\cB(\cH,\cK)$, one verifies \eqref{derivatives} by induction. 
\end{proof}

With $\cD=\dom (V_2^*)$ and $\Omega=\rho(H_0)$, the next result follows immediately 
from Definition~\ref{d3.1} and Lemma~\ref{l4.2}.

\begin{corollary} \lb{c4.3}
Assume Hypothesis~\ref{h4.1} and let 
$z_0\in\rho(H_0)$. Then the collection 
$\{\varphi_0,\ldots,\varphi_{k-1}\} \subset \cD = \dom(V_2^*)$  
form a Jordan chain of length $k \in \bbN$ for the function 
\begin{equation}\lb{bsfunction202}
\rho(H_0) \ni z\mapsto  I_{\cK} + V_1(H_0-z I_{\cH})^{-1}V_2^* 
\end{equation}
at $z_0\in\rho(H_0)$ if and only if $\varphi_0 \neq 0$
and for all $j \in \{ 0,\ldots,k-1\} $ one has 
$\varphi_j \in \dom (V_2^*)$ and 
\begin{equation}\lb{assu}
 \sum_{\ell = 0}^j V_1(H_0-z_0 I_{\cH})^{-(\ell +1)}V_2^*\varphi_{j-\ell}=-\varphi_j, \quad 
 j = 0,\dots,k-1.
\end{equation}
\end{corollary}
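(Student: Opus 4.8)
The plan is to apply Definition~\ref{d3.1} verbatim to the strongly analytic operator-valued function $A(\dott)$ defined on $\Omega = \rho(H_0)$ by $A(z) = I_{\cK} + V_1(H_0 - z I_{\cH})^{-1} V_2^*$, with constant domain $\cD = \dom(V_2^*) \subseteq \cK$. That $A(\dott)$ is a legitimate strongly analytic family with this domain, and that its derivatives at a point $z_0 \in \rho(H_0)$ are the ones needed below, is exactly the content of Lemma~\ref{l4.2}: for $\varphi \in \dom(V_2^*)$ one has $A^{(\ell)}(z_0)\varphi = \ell!\, V_1(H_0 - z_0 I_{\cH})^{-(\ell+1)} V_2^* \varphi$ for $\ell \in \bbN$, while $A^{(0)}(z_0)\varphi = A(z_0)\varphi = \varphi + V_1(H_0 - z_0 I_{\cH})^{-1} V_2^* \varphi$. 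Each such term is a well-defined element of $\cK$, since $\varphi \in \dom(V_2^*)$ and $V_1(H_0 - z_0 I_{\cH})^{-1} \in \cB(\cH,\cK)$ (by the closed graph theorem, as recalled before Lemma~\ref{l4.2}), so that $V_1(H_0 - z_0 I_{\cH})^{-(\ell+1)}$ is a bounded operator from $\cH$ into $\cK$.

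Next I would simply transcribe the defining relations of a Jordan chain. By Definition~\ref{d3.1}$(i)$, the collection $\{\varphi_0,\dots,\varphi_{k-1}\} \subset \cD$ forms a Jordan chain of length $k$ for $A(\dott)$ at $z_0$ if and only if $\varphi_0 \neq 0$ and $\sum_{\ell=0}^{j} \f{1}{\ell!}\, A^{(\ell)}(z_0) \varphi_{j-\ell} = 0$ for every $j \in \{0,\dots,k-1\}$; the requirement $\varphi_j \in \cD = \dom(V_2^*)$ is part of the definition and is already incorporated in the standing assumption $\{\varphi_0,\dots,\varphi_{k-1}\} \subset \cD$. Substituting the derivative formulas from the first step and separating off the $\ell = 0$ term, the $j$-th equation reads $\varphi_j + V_1(H_0 - z_0 I_{\cH})^{-1} V_2^* \varphi_j + \sum_{\ell=1}^{j} V_1(H_0 - z_0 I_{\cH})^{-(\ell+1)} V_2^* \varphi_{j-\ell} = 0$, where the factors $\f{1}{\ell!}$ and $\ell!$ have cancelled in the $\ell \geq 1$ terms; absorbing the $\ell = 0$ contribution back into the sum, this is precisely $\sum_{\ell=0}^{j} V_1(H_0 - z_0 I_{\cH})^{-(\ell+1)} V_2^* \varphi_{j-\ell} = -\varphi_j$, that is, \eqref{assu}. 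Running this equivalence over all $j \in \{0,\dots,k-1\}$, and noting that every manipulation is reversible, yields the claimed biconditional, including the remaining condition $\varphi_0 \neq 0$, which is common to both sides.

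I do not expect a genuine obstacle here: once Lemma~\ref{l4.2} supplies the derivatives, the corollary is a purely bookkeeping translation of Definition~\ref{d3.1} into the concrete Birman--Schwinger setting of Hypothesis~\ref{h4.1}. The only points that call for a little care are the index shift in the Cauchy-product-type sum (a derivative of order $\ell$ produces the resolvent power $-(\ell+1)$, which is why \eqref{assu} involves $(H_0 - z_0 I_{\cH})^{-(\ell+1)}$ rather than $(H_0 - z_0 I_{\cH})^{-\ell}$), and the harmless fact that ``$\varphi_j \in \dom(V_2^*)$'' appears on both sides of the equivalence, being forced on the ``Jordan chain'' side by the very notion of a Jordan chain of the operator-valued function $A(\dott)$.
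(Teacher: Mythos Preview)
Your proposal is correct and follows exactly the paper's approach: the paper itself simply remarks that the corollary follows immediately from Definition~\ref{d3.1} and Lemma~\ref{l4.2}, and your argument unpacks this in detail, including the cancellation of the factorials and the index shift $\ell \mapsto \ell+1$ in the resolvent power.
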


We continue with the following auxiliary result, which can be viewed as a variant of \cite[Lemma 4.5]{BE19} and goes back to more abstract
considerations in \cite[Sect.~7.4.4]{DM17}.
 
\begin{lemma}\lb{l4.4}
Let $H_0$ and $H=H_0+V_2^*V_1$ be as in Hypothesis~\ref{h4.1}, let $z_0\in\rho(H_0)$
and let $\{f_0,\dots, f_{k-1}\} \subset \dom(H)$, $k \in \bbN$, be a Jordan chain of length $k$ for $H$ at $z_0$. 
Then for all $j \in \{ 1,\dots,k\} $, 
 \begin{equation}
  -V_1(H_0-z_0 I_{\cH})^{-1}f_{j-1} = 
  \sum_{\ell = 1}^j V_1 (H_0-z_0 I_{\cH})^{-(\ell +1)}V_2^*V_1 f_{j-\ell} . 
 \end{equation}
\end{lemma}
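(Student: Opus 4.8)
The plan is to start from the Jordan chain relations for $H$ at $z_0$, namely $(H-z_0 I_\cH)f_0 = 0$ and $(H-z_0 I_\cH)f_j = f_{j-1}$ for $j \in \{1,\dots,k-1\}$, and rewrite them using $H = H_0 + V_2^*V_1$ on $\dom(H)=\dom(H_0)$. This gives $(H_0 - z_0 I_\cH)f_j = f_{j-1} - V_2^*V_1 f_j$ for each $j$ (with $f_{-1}:=0$), and since $z_0\in\rho(H_0)$ we may apply $(H_0-z_0 I_\cH)^{-1}$ to obtain the recursion
\begin{equation}\lb{plan-rec}
 f_j = (H_0-z_0 I_\cH)^{-1} f_{j-1} - (H_0-z_0 I_\cH)^{-1} V_2^*V_1 f_j, \quad j \in \{0,\dots,k-1\}.
\end{equation}
The first step is then to iterate \eqref{plan-rec}: solving for $(H_0-z_0 I_\cH)^{-1} f_{j-1}$ and substituting repeatedly, I expect to derive a closed formula expressing $(H_0-z_0 I_\cH)^{-1} f_{j-1}$ as a telescoping/convolution-type sum $\sum_{\ell=1}^j (H_0-z_0 I_\cH)^{-(\ell+1)} V_2^* V_1 f_{j-\ell}$ plus a leading term involving $(H_0-z_0 I_\cH)^{-(j+1)} f_{-1}$-type contributions that vanish because $(H_0-z_0I_\cH)f_0 = 0$, i.e.\ $f_0$ lies in the kernel and the boundary term drops.

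More concretely, I would proceed by induction on $j$. For $j=1$: from \eqref{plan-rec} with $j=0$ we get $f_0 = -(H_0-z_0 I_\cH)^{-1}V_2^*V_1 f_0$, hence applying $(H_0-z_0I_\cH)^{-1}$ gives $(H_0-z_0I_\cH)^{-1}f_0 = -(H_0-z_0I_\cH)^{-2}V_2^*V_1 f_0$, which is exactly the claimed identity for $j=1$. For the inductive step, assume the formula holds for $j-1$; apply $(H_0-z_0I_\cH)^{-1}$ to \eqref{plan-rec} to get
\begin{equation}\lb{plan-step}
 (H_0-z_0I_\cH)^{-1}f_j = (H_0-z_0I_\cH)^{-2}f_{j-1} - (H_0-z_0I_\cH)^{-2}V_2^*V_1 f_j,
\end{equation}
then express $(H_0-z_0I_\cH)^{-1}f_{j-1}$ via the induction hypothesis, shift an extra resolvent power onto each term, and combine with the new $-(H_0-z_0I_\cH)^{-2}V_2^*V_1 f_j$ term, re-indexing the sum so that it runs from $\ell=1$ to $j$. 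A point requiring care is the identification $V_1(H_0-z_0I_\cH)^{-(\ell+1)}V_2^*V_1 f_{j-\ell}$: one must track that $V_1$ applied to the various resolvent powers is legitimate, using that $V_1$ is closed and $V_1(H_0-z_0I_\cH)^{-1}\in\cB(\cH,\cK)$ by the closed graph theorem (as noted after Hypothesis~\ref{h4.1}), so $V_1(H_0-z_0I_\cH)^{-m}\in\cB(\cH,\cK)$ for all $m\in\bbN$, and that $f_{j-\ell}\in\dom(H_0)\subseteq\dom(V_2^*V_1)$ so $V_1 f_{j-\ell}$ is well-defined.

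The main obstacle I anticipate is purely bookkeeping rather than conceptual: carefully managing the powers of the resolvent and the index shift when substituting the induction hypothesis into \eqref{plan-step}, so that the boundary term built from $f_0$ genuinely disappears (this is where $(H_0-z_0I_\cH)f_0=0$, equivalently $f_0\in\ker(H_0-z_0I_\cH)$, or rather the fact that in the iterated recursion the leading homogeneous term is $(H_0-z_0I_\cH)^{-j}f_{-1}=0$, is used) and the remaining terms assemble into precisely $\sum_{\ell=1}^j V_1(H_0-z_0I_\cH)^{-(\ell+1)}V_2^*V_1 f_{j-\ell}$. Once the resolvent-level identity is established, applying the bounded operator $V_1$ to both sides of the $j$-th relation yields the statement. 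I would present the computation compactly, doing the $j=1$ base case explicitly and then the induction step, suppressing the routine re-indexing.
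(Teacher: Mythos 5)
Your plan is correct, but it takes a genuinely different and more direct route than the paper's proof. The paper proves Lemma~\ref{l4.4} by first establishing, by induction on $j$, the auxiliary identity \eqref{big} valid for $z \in \rho(H_0) \setminus \{z_0\}$ --- one that packages the desired sum into Taylor-remainder expressions of the resolvent divided by powers of $z-z_0$ --- and then passes to the limit $z \to z_0$ using the expansion \eqref{taylor}. You instead work directly at $z = z_0$: rewriting the Jordan-chain relations as the resolvent recursion $f_j = (H_0 - z_0 I_\cH)^{-1}f_{j-1} - (H_0 - z_0 I_\cH)^{-1}V_2^*V_1 f_j$ in $\cH$ (with $f_{-1}:=0$), you iterate this by induction to get the $\cH$-valued identity $-(H_0 - z_0 I_\cH)^{-1}f_{j-1} = \sum_{\ell=1}^{j}(H_0-z_0I_\cH)^{-(\ell+1)}V_2^*V_1 f_{j-\ell}$, and then apply $V_1$. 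This avoids the Taylor expansion and limiting arguments entirely and is arguably cleaner; the two routes prove the same lemma, and the paper's circuitous detour through $z \neq z_0$ has no visible payoff here. Two small points to fix in your write-up: the indexing in the inductive step is off by one (you announce the hypothesis at index $j-1$ but then manipulate the recursion for $f_j$ and quote the hypothesis for $(H_0-z_0 I_\cH)^{-1}f_{j-1}$; the clean version is to assume the formula for index $j$, apply the recursion for $f_j$, and deduce the formula for index $j+1$), and $V_1$ is closed but \emph{not} assumed bounded --- the legitimacy of applying $V_1$ at the end rests on $\ran\big((H_0 - z_0 I_\cH)^{-1}\big) \subseteq \dom(H_0) \subseteq \dom(V_2^*V_1) \subseteq \dom(V_1)$ (equivalently, on $V_1(H_0-z_0 I_\cH)^{-m}\in\cB(\cH,\cK)$ for $m\geq 1$), not on $V_1$ itself being a bounded operator.
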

\begin{proof}
We shall show by induction that 
 \begin{align}\lb{big}
 \begin{split}
& -V_1(H_0-z I_{\cH})^{-1}f_{j-1}
 =\sum_{\ell = 1}^j \frac{1}{(z - z_0)^\ell}V_1     \\
 & \quad \times \bigg((H_0-z I_{\cH})^{-1} 
  -\sum_{s=0}^{\ell-1}(H_0-z_0 I_{\cH})^{-(s+1)}(z - z_0)^s\bigg)V_2^*V_1 f_{j-\ell}
 \end{split}
 \end{align}
for all $j \in \{ 1,\dots,k\} $ and 
$z  \in\rho(H_0) \backslash  \{ z_0 \} $. The assertion of the 
 lemma then follows by taking the limit
 $z \rightarrow z_0$ in \eqref{big}. 
Indeed, using 
 $V_1(H_0-z_0 I_{\cH})^{-1}\in\cB(\cH,\cK)$ one obtains for the limit on the left-hand side of \eqref{big} 
 \begin{align}
 \begin{split} 
& \lim_{z \to z_0}
   V_1(H_0-z I_{\cH})^{-1}f_{j-1} - V_1(H_0-z_0 I_{\cH})^{-1}f_{j-1}    \\
& \quad = \lim_{z \to z_0}
     V_1(H_0-z_0 I_{\cH})^{-1}(z - z_0)(H_0-z I_{\cH})^{-1}f_{j-1} = 0. \lb{limarg} 
\end{split}
\end{align}
On the other hand, using the Taylor expansion \eqref{taylor} of the resolvent 
$z\mapsto (H_0-z I_{\cH})^{-1}$ in a neighbourhood of $z_0\in\rho(H_0)$ shows that 
\begin{align}\lb{limarg2}
&\frac{1}{(z - z_0)^\ell}V_1\bigg((H_0-z I_{\cH})^{-1} 
-\sum_{s=0}^{\ell-1}(H_0-z_0 I_{\cH})^{-(s+1)}(z - z_0)^s\bigg)V_2^*V_1 f_{j-\ell}    \no \\
&\quad =\frac{1}{(z - z_0)^\ell}V_1\bigg((H_0-z_0 I_{\cH})^{-(\ell +1)}(z - z_0)^\ell   \no \\
& \hspace*{28mm} + \sum_{s= \ell+1}^{\infty}(H_0-z_0 I_{\cH})^{-(s+1)}(z - z_0)^s\bigg)V_2^*V_1 f_{j-\ell}  \no \\
&\quad =V_1 (H_0-z_0 I_{\cH})^{-(\ell +1)}V_2^*V_1 f_{j-\ell}   \no  \\ 
& \qquad + V_1 (H_0-z_0 I_{\cH})^{-1}
\sum_{s= \ell+1}^{\infty}(H_0-z_0 I_{\cH})^{-s}(z - z_0)^{s-\ell}V_2^*V_1 f_{j-\ell},   
 \end{align}
and using again 
 $V_1(H_0-z_0 I_{\cH})^{-1}\in\cB(\cH,\cK)$
 it follows that in the limit $z \rightarrow z_0$
 the right-hand side of \eqref{big} 
 tends to 
 \begin{equation}
 \sum_{\ell = 1}^j V_1 (H_0-z_0 I_{\cH})^{-(\ell +1)}V_2^*V_1 f_{j-\ell} .
 \end{equation}
Therefore, taking the limit
 $z \rightarrow z_0$ in \eqref{big} implies the assertion of the lemma.

Next, we prove \eqref{big} for $j=1$.
By assumption we have  $(H-z_0 I_{\cH})f_0 = 0$ and hence $V_2^*V_1f_0=-(H_0-z_0 I_{\cH}) f_0$. 
Therefore, 
 \begin{align}
 & \frac{1}{z - z_0}V_1\big[(H_0-z I_{\cH})^{-1}  - (H_0-z_0 I_{\cH})^{-1} \big] V_2^*V_1 f_0  \no \\
 & \quad =V_1 (H_0-z I_{\cH})^{-1} (H_0-z_0 I_{\cH})^{-1}  V_2^*V_1 f_0  \no \\
 & \quad =-V_1 (H_0-z I_{\cH})^{-1}  f_0, \quad z  \in\rho(H_0) \backslash  \{ z_0 \},  
 \end{align}
which gives the desired formula \eqref{big} for $j=1$.

Next, let $m \in \{ 1,\ldots,k-1\} $ and assume that the formula \eqref{big} holds for $j=m$
 and all $z  \in\rho(H_0) \backslash  \{ z_0 \} $.
Taking the limit $z \rightarrow z_0$
one deduces that 
 \begin{equation}\lb{ok}
  -V_1(H_0-z_0 I_{\cH})^{-1}f_{m-1}
=\sum_{\ell = 1}^m V_1 (H_0-z_0 I_{\cH})^{-(\ell +1)}V_2^*V_1 f_{m- \ell } ,
 \end{equation}
by arguing as in \eqref{limarg} and \eqref{limarg2}.
 
Let $z  \in\rho(H_0) \backslash  \{ z_0 \}$.
We next prove formula \eqref{big} for $j=m+1$.
Starting with the right-hand side of \eqref{big} a computation yields
 \begin{align}
  &\sum_{\ell = 1}^{m+1} \frac{1}{(z - z_0)^\ell}V_1\bigg((H_0-z I_{\cH})^{-1} 
  -\sum_{s=0}^{\ell-1}(H_0-z_0 I_{\cH})^{-(s+1)}(z - z_0)^s\bigg)V_2^*V_1 f_{m+1-\ell}    \no \\
  &\quad =\sum_{\ell = 2}^{m+1} \frac{1}{(z - z_0)^\ell}V_1\bigg((H_0-z I_{\cH})^{-1} 
  -\sum_{s=0}^{\ell-1}(H_0-z_0 I_{\cH})^{-(s+1)}(z - z_0)^s\bigg)    \no \\
  & \hspace*{33mm} \times V_2^*V_1 f_{m+1-\ell}    \no \\
  &\qquad + \frac{1}{z - z_0}V_1\big[(H_0-z I_{\cH})^{-1} 
  - (H_0-z_0 I_{\cH})^{-1}\big] V_2^*V_1 f_m   \no \\
  &\quad =\sum_{\ell = 1}^{m} \frac{1}{(z - z_0)^{\ell+1}}V_1\bigg((H_0-z I_{\cH})^{-1} 
  -\sum_{s=0}^{\ell}(H_0-z_0 I_{\cH})^{-(s+1)}(z - z_0)^s\bigg)     \no \\
 & \hspace*{35mm} \times V_2^*V_1 f_{m- \ell }   \no \\
  &\qquad + \frac{1}{z - z_0}V_1\big[(H_0-z I_{\cH})^{-1} 
  - (H_0-z_0 I_{\cH})^{-1}\big]V_2^*V_1 f_m     \no \\
  &\quad =\frac{1}{z - z_0}
      \sum_{\ell = 1}^{m} \frac{1}{(z - z_0)^{\ell}}V_1\bigg((H_0-z I_{\cH})^{-1} 
  -\sum_{s=0}^{\ell-1}(H_0-z_0 I_{\cH})^{-(s+1)}(z - z_0)^s\bigg)    \no \\
  &  \hspace*{42mm} \times V_2^*V_1 f_{m- \ell }    \no \\
  &\qquad  -\frac{1}{z - z_0}\sum_{\ell = 1}^m V_1(H_0-z_0 I_{\cH})^{-(\ell +1)}V_2^*V_1 f_{m- \ell } \no \\
  &\qquad + \frac{1}{z - z_0}V_1
     \big[(H_0-z I_{\cH})^{-1} - (H_0-z_0 I_{\cH})^{-1}\big] V_2^*V_1 f_m.    \lb{3.35} 
 \end{align}
At this point we use assumption \eqref{big} for $j=m$ for the first term on the right-hand side,  
\eqref{ok} for the second term on the right-hand side of \eqref{3.35}, 
and $(H-z_0 I_{\cH})f_m=f_{m-1}$ to conclude that  formula \eqref{3.35} becomes
\begin{align}
 &\sum_{\ell = 1}^{m+1} \frac{1}{(z - z_0)^\ell}V_1\bigg((H_0-z I_{\cH})^{-1}    \no \\
& \hspace*{30mm} 
 -\sum_{s=0}^{\ell-1}(H_0-z_0 I_{\cH})^{-(s+1)}(z - z_0)^s\bigg)V_2^*V_1 f_{m+1-\ell}\no \\
  &\quad = \frac{1}{z - z_0} -V_1(H_0-z I_{\cH})^{-1}f_{m-1}    \no \\
 & \qquad +\frac{1}{z - z_0} V_1(H_0-z_0 I_{\cH})^{-1}f_{m-1}  \no \\
  &\qquad + \frac{1}{z - z_0} V_1\big[(H_0-z I_{\cH})^{-1} - (H_0-z_0 I_{\cH})^{-1}\big]V_2^*V_1 f_m  \no \\
  &\quad=\frac{1}{z - z_0} V_1\big[(H_0-z I_{\cH})^{-1} - (H_0-z_0 I_{\cH})^{-1}\big](V_2^*V_1 f_m-f_{m-1})  \no \\
  &\quad= V_1 (H_0-z I_{\cH})^{-1}  
  (H_0-z_0 I_{\cH})^{-1}(V_2^*V_1 f_m-(H-z_0 I_{\cH})f_m)
  \no \\
  &\quad=- V_1 (H_0-z I_{\cH})^{-1}  (H_0-z_0 I_{\cH})^{-1}(H_0-z_0 I_{\cH})f_m  \no \\
  &\quad=- V_1 (H_0-z I_{\cH})^{-1}  f_m,
\end{align}
which is \eqref{big} for $j=m+1$. 
\end{proof}

The generalized Birman--Schwinger principal then reads as follows:

\begin{theorem}\lb{t4.5}
Let $H_0$ and $H=H_0+V_2^*V_1$ be as in Hypothesis~\ref{h4.1}, assume 
$z_0\in\rho(H_0)$, and consider the map 
\begin{equation}\lb{bsfunction}
\rho(H_0) \ni z\mapsto  I_{\cK} + V_1(H_0-z I_{\cH})^{-1}V_2^*. 
\end{equation} 
Then the following items $(i)$ and $(ii)$ hold: \\[1mm] 
$(i)$ Let $\{f_0,\dots,f_{k-1}\} \subset \dom(H)$, $k \in \bbN$, be a Jordan chain of length $k$ for $H$ at $z_0$.
Define 
\begin{equation} 
\varphi_j = V_1 f_j
\end{equation} 
for all $j \in \{ 0,\ldots,k-1\} $.
Then $\{\varphi_0,\dots, \varphi_{k-1}\} \subset \dom(V_2^*)$ is a Jordan chain of length $k$ 
for the function \eqref{bsfunction} at $z_0$. \\[1mm] 
$(ii)$ Let $\{\varphi_0,\dots,\varphi_{k-1}\} \subset \dom(V_2^*)$, $k \in \bbN$, be a Jordan chain for the function \eqref{bsfunction}
at $z_0$.
Define 
\begin{equation}\lb{f0}
  f_0=-(H_0-z_0 I_{\cH})^{-1}V_2^*\varphi_0
\end{equation}
and inductively define 
\begin{equation} 
f_j = - (H_0-z_0 I_{\cH})^{-1}
    \bigg( f_{j-1} - V_2^*\sum_{\ell = 0}^j V_1(H_0-z_0 I_{\cH})^{-(\ell +1)}V_2^*\varphi_{j - \ell }\bigg)  \lb{3.13} 
\end{equation} 
for all $j \in \{ 1,\ldots,k-1\} $.
Then $\{f_0,\dots,f_{k-1}\} \in \dom(H)$
is a Jordan chain of length $k$ for $H$ at $z_0$ with $V_1f_j = \varphi_j$ for all 
$j \in \{ 0,\ldots,k-1\} $.
\end{theorem}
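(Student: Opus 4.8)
The plan is to derive both implications from two facts already established: Corollary~\ref{c4.3}, according to which $\{\varphi_0,\dots,\varphi_{k-1}\}\subset\dom(V_2^*)$ is a Jordan chain of length $k$ for the function \eqref{bsfunction} at $z_0$ exactly when $\varphi_0\neq0$ and the identities \eqref{assu} hold; and Lemma~\ref{l4.4}, the resolvent identity attached to a Jordan chain of $H$. Throughout I will use that $V_1(H_0-z_0 I_{\cH})^{-1}\in\cB(\cH,\cK)$, that $\ran((H_0-z_0 I_{\cH})^{-1})=\dom(H_0)=\dom(H)\subseteq\dom(V_2^*V_1)\subseteq\dom(V_1)$, and that $(H-z_0 I_{\cH})g=(H_0-z_0 I_{\cH})g+V_2^*V_1 g$ for $g\in\dom(H)$.

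To prove $(i)$, I would set $\varphi_j=V_1 f_j$; since $f_j\in\dom(H_0)\subseteq\dom(V_2^*V_1)$ this lies in $\dom(V_2^*)$. The nontriviality $\varphi_0\neq0$ is automatic: if $V_1 f_0=0$ then $V_2^*V_1 f_0=0$, so $(H_0-z_0 I_{\cH})f_0=(H-z_0 I_{\cH})f_0=0$, forcing $f_0=0$ because $z_0\in\rho(H_0)$, contradicting $f_0\neq0$. By Corollary~\ref{c4.3} it then remains to verify \eqref{assu} for these $\varphi_j$. For $j=0$ this is the rearrangement of $V_2^*V_1 f_0=-(H_0-z_0 I_{\cH})f_0$ after applying $V_1(H_0-z_0 I_{\cH})^{-1}$. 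For $1\leq j\leq k-1$ I would split off the $\ell=0$ summand and rewrite $(H-z_0 I_{\cH})f_j=f_{j-1}$ as $V_2^*V_1 f_j=f_{j-1}-(H_0-z_0 I_{\cH})f_j$, which yields $V_1(H_0-z_0 I_{\cH})^{-1}V_2^*\varphi_j=-\varphi_j+V_1(H_0-z_0 I_{\cH})^{-1}f_{j-1}$; by Lemma~\ref{l4.4} the remaining sum $\sum_{\ell=1}^j V_1(H_0-z_0 I_{\cH})^{-(\ell+1)}V_2^*V_1 f_{j-\ell}$ equals $-V_1(H_0-z_0 I_{\cH})^{-1}f_{j-1}$, so the two contributions cancel and \eqref{assu} follows.

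To prove $(ii)$, Corollary~\ref{c4.3} supplies $\varphi_0\neq0$, $\varphi_j\in\dom(V_2^*)$, and \eqref{assu}; in particular the bracketed sum in \eqref{3.13} equals $-\varphi_j\in\dom(V_2^*)$, so the recursion \eqref{f0}, \eqref{3.13} is well posed and, after this simplification, produces vectors $f_j\in\ran((H_0-z_0 I_{\cH})^{-1})=\dom(H)$ with $(H_0-z_0 I_{\cH})f_0=-V_2^*\varphi_0$ and $(H_0-z_0 I_{\cH})f_j=f_{j-1}-V_2^*\varphi_j$ for $1\leq j\leq k-1$. I would then prove by induction on $j$ the joint statement that $\{f_0,\dots,f_j\}$ is a Jordan chain of length $j+1$ for $H$ at $z_0$ and that $V_1 f_i=\varphi_i$ for $0\leq i\leq j$. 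For $j=0$: $V_1 f_0=-V_1(H_0-z_0 I_{\cH})^{-1}V_2^*\varphi_0=\varphi_0$ by \eqref{assu}, hence $(H-z_0 I_{\cH})f_0=-V_2^*\varphi_0+V_2^*V_1 f_0=0$, and $f_0\neq0$ because $V_1 f_0=\varphi_0\neq0$. For the step, assuming the claim at $j-1$, I apply Lemma~\ref{l4.4} to $\{f_0,\dots,f_{j-1}\}$ with its index equal to $j$ and substitute $V_1 f_{j-\ell}=\varphi_{j-\ell}$ to get $-V_1(H_0-z_0 I_{\cH})^{-1}f_{j-1}=\sum_{\ell=1}^j V_1(H_0-z_0 I_{\cH})^{-(\ell+1)}V_2^*\varphi_{j-\ell}$; then, using $(H_0-z_0 I_{\cH})f_j=f_{j-1}-V_2^*\varphi_j$ and afterwards \eqref{assu},
\begin{equation*}
V_1 f_j=V_1(H_0-z_0 I_{\cH})^{-1}f_{j-1}-V_1(H_0-z_0 I_{\cH})^{-1}V_2^*\varphi_j=-\sum_{\ell=0}^j V_1(H_0-z_0 I_{\cH})^{-(\ell+1)}V_2^*\varphi_{j-\ell}=\varphi_j .
\end{equation*}
Consequently $(H-z_0 I_{\cH})f_j=(f_{j-1}-V_2^*\varphi_j)+V_2^*V_1 f_j=f_{j-1}$, and $f_j\neq0$ since $f_{j-1}\neq0$; this closes the induction, and the case $j=k-1$ gives $(ii)$.

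I expect the only genuinely delicate point to be the inductive step of $(ii)$: Lemma~\ref{l4.4} may be applied only once one knows that the vectors $f_0,\dots,f_{j-1}$ built so far already form a bona fide Jordan chain for $H$ \emph{and} that $V_1$ returns them to $\varphi_0,\dots,\varphi_{j-1}$, which is precisely the induction hypothesis, so the superficially circular structure is in fact legitimate. Everything else is routine resolvent bookkeeping; the one extra precaution is that $V_2^*$ is applied in \eqref{3.13} only to $\sum_{\ell=0}^j V_1(H_0-z_0 I_{\cH})^{-(\ell+1)}V_2^*\varphi_{j-\ell}$, which by \eqref{assu} equals $-\varphi_j$ and hence lies in $\dom(V_2^*)$.
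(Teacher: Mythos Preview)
Your argument for part $(i)$ is correct and matches the paper's line for line: the same reduction via Corollary~\ref{c4.3} and Lemma~\ref{l4.4}.

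For part $(ii)$ there is a sign slip in your simplification of \eqref{3.13}. Substituting \eqref{assu} into \eqref{3.13}, the bracket becomes
\[
f_{j-1} - V_2^*\big(-\varphi_j\big) = f_{j-1} + V_2^*\varphi_j,
\]
so the recursion as written yields $(H_0-z_0 I_{\cH})f_j = -f_{j-1} - V_2^*\varphi_j$, not the $f_{j-1} - V_2^*\varphi_j$ you use. With the correct sign your direct induction collapses: one computes
\[
V_1 f_j \;=\; -V_1(H_0-z_0 I_{\cH})^{-1}f_{j-1} - V_1(H_0-z_0 I_{\cH})^{-1}V_2^*\varphi_j
\;=\; \varphi_j + 2\sum_{\ell=1}^j V_1(H_0-z_0 I_{\cH})^{-(\ell+1)}V_2^*\varphi_{j-\ell}
\]
(combine Lemma~\ref{l4.4} with \eqref{assu}), hence $(H-z_0 I_{\cH})f_j = -f_{j-1}$ rather than $f_{j-1}$. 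So for the recursion as literally stated in \eqref{3.13}, your short argument does not go through.

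This in fact exposes a sign typo in the theorem statement and in the paper's derivation of \eqref{fm}: there the step from $(H_0-z_0 I_{\cH})^{-1}[-V_2^*V_1\tilde f_m + f_{m-1}]$ to $-(H_0-z_0 I_{\cH})^{-1}[f_{m-1} + V_2^*V_1\tilde f_m]$ is also off by a sign on $f_{m-1}$. The paper's route via the auxiliary vector $\tilde f_m = -\sum_{\ell=0}^m (H_0-z_0 I_{\cH})^{-(\ell+1)}V_2^*\varphi_{m-\ell}$, the identity \eqref{opop}, and then \eqref{e4.36} is correct and self-contained once \eqref{fm} is taken as the \emph{definition} of $f_m$ (equivalently, once \eqref{3.13} is corrected so that it simplifies to $(H_0-z_0 I_{\cH})f_j = f_{j-1} - V_2^*\varphi_j$). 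Relative to the paper, your approach is genuinely more economical: with the corrected recursion, the $\tilde f_m$ detour and \eqref{opop}--\eqref{e4.36} become unnecessary, and the whole inductive step reduces to the two-line computation you gave.
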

\begin{proof} 
$(i)$ Assume that $\{f_0,\dots,f_{k-1}\} \subset \dom(H)$ is a Jordan chain of length $k$ for $H$ at $z_0$.
If $j \in \{ 0,\ldots,k-1\} $, then $f_j \in \dom (H) \subseteq \dom (V_2^*V_1) \subseteq \dom (V_1)$.
For all $j \in \{ 0,\ldots,k-1\}$, define $\varphi_j = V_1 f_j$.
Then $\varphi_0 \neq 0$, as otherwise 
$(H_0-z_0 I_{\cH})f_0=-V_2^*V_1 f_0=0$ and therefore $f_0 = 0$, since $z_0 \in \rho(H_0)$.
We shall show that 
\begin{equation}\lb{assuxx}
\sum_{\ell = 0}^j V_1(H_0-z_0 I_{\cH})^{-(\ell +1)}V_2^*V_1 f_{j-\ell}=-V_1 f_j
\end{equation}
for all $j \in \{ 0,\ldots,k-1\} $, which proves item $(i)$ by Corollary~\ref{c4.3}.

If $j=0$, then $(H-z_0 I_{\cH})f_0=0$ and hence $V_2^*V_1 f_0=-(H_0-z_0 I_{\cH})f_0$.
This implies
\begin{equation}
 V_1(H_0-z_0 I_{\cH})^{-1}V_2^*V_1f_0=-V_1 f_0,
\end{equation}
which is \eqref{assuxx} for $j=0$.

Let $j \in \{ 1,\ldots,k-1\} $.
Making use of Lemma~\ref{l4.4} one obtains
\begin{align}
 & -V_1(H_0-z_0 I_{\cH})^{-1}V_2^*V_1 f_j-V_1 f_j     \no \\
 & \quad = -V_1(H_0-z_0 I_{\cH})^{-1} (V_2^*V_1+H_0-z_0 I_{\cH})  f_j \no \\
 & \quad = -V_1(H_0-z_0 I_{\cH})^{-1} (H-z_0 I_{\cH}) f_j   \no \\
 & \quad = -V_1(H_0-z_0 I_{\cH})^{-1} f_{j-1}   \no \\
 & \quad =\sum_{\ell = 1}^j V_1 (H_0-z_0 I_{\cH})^{-(\ell +1)}V_2^*V_1 f_{j-\ell} .
\end{align}
Consequently
\begin{equation}
\begin{split}
 -V_1 f_j&=V_1(H_0-z_0 I_{\cH})^{-1}V_2^*V_1 f_j +\sum_{\ell = 1}^j V_1 (H_0-z_0 I_{\cH})^{-(\ell +1)}V_2^*V_1 f_{j-\ell}\\
         &=\sum_{\ell = 0}^j V_1 (H_0-z_0 I_{\cH})^{-(\ell +1)}V_2^*V_1 f_{j-\ell},
 \end{split}
 \end{equation}
and hence \eqref{assuxx} holds.

$(ii)$ Assume that $\{\varphi_0,\dots,\varphi_{k-1}\} \subset \dom(V_2^*)$ is a Jordan chain of length $k$ 
for the function \eqref{bsfunction} at $z_0\in\rho(H_0)$.
Then \eqref{assu} in Corollary~\ref{c4.3} holds for all $j \in \{ 0,\ldots,k-1\} $  and
 $\varphi_0\not= 0$.
Define $f_0,\ldots,f_{k-1}$ as in \eqref{3.13}. We now proceed by induction. 

The assumption \eqref{assu} for $j=0$ gives
$V_1(H_0-z_0 I_{\cH})^{-1}V_2^*\varphi_0=-\varphi_0$ and with $f_0$ as in \eqref{f0} one obtains
\begin{equation}
\begin{split}
 (H-z_0 I_{\cH})f_0&=(H_0-z_0 I_{\cH})f_0+V_2^*V_1 f_0\\
 &=-V_2^*\varphi_0 - V_2^*V_1(H_0-z_0 I_{\cH})^{-1}V_2^*\varphi_0\\
 &=0.
\end{split}
\end{equation}
Moreover, $V_1f_0=-V_1(H_0-z_0 I_{\cH})^{-1}V_2^*\varphi_0=\varphi_0$.
Since $\varphi_0\not=0$ it also follows that $f_0\not=0$.

Let $m \in \{ 1,\ldots,k-1\} $ and assume that $\{f_0,\dots,f_{m-1}\}$ is a Jordan chain of length $m$ for
 $H$ at $z_0$ and $\varphi_j=V_1 f_j$ for all $j \in \{ 0,\ldots,m-1 \} $.
It follows from Lemma~\ref{l4.4} that 
\begin{equation}\lb{useit}
 -V_1(H_0-z_0 I_{\cH})^{-1}f_{m-1}
 = \sum_{\ell = 1}^m V_1 (H_0-z_0 I_{\cH})^{-(\ell +1)}V_2^*V_1 f_{m- \ell }
 .
\end{equation}
We shall show that $(H-z_0 I_{\cH})f_m=f_{m-1}$ and $V_1 f_m = \varphi_m$.
For convenience we next set 
\begin{equation}
\wti f_m=-\sum_{\ell = 0}^m (H_0-z_0 I_{\cH})^{-(\ell +1)}V_2^*\varphi_{m- \ell}.
\end{equation}
Then 
\begin{align}
& \wti f_m-(H_0-z_0 I_{\cH})^{-1}\big[(H-z_0 I_{\cH})\wti f_m-f_{m-1} \big]    \no \\
& \quad = (H_0-z_0 I_{\cH})^{-1} \big[(H_0-z_0 I_{\cH})\wti f_m - (H - z_0 I_{\cH})\wti f_m - f_{m-1} \big]  
\no \\
& \quad = (H_0-z_0 I_{\cH})^{-1} \big[- V_2^* V_1\wti f_m - f_{m-1} \big] \no  \\
& \quad = - (H_0-z_0 I_{\cH})^{-1} \big[f_{m-1} + V_2^* V_1\wti f_m \big]    \no \\
& \quad = f_m \lb{fm}
\end{align}
by the definition of $f_m$. It follows from \eqref{assu} with $j=m$ that $V_1\wti f_m=\varphi_m$. Next, 
\begin{align}
& -V_1(H_0-z_0 I_{\cH})^{-1}(H-z_0 I_{\cH})\wti f_m \no \\
& \quad =-V_1(H_0-z_0 I_{\cH})^{-1}[V_2^*V_1+H_0-z_0 I_{\cH}]\wti f_m  \no \\
 & \quad =-V_1(H_0-z_0 I_{\cH})^{-1}V_2^*V_1\wti f_m-V_1\wti f_m  \no \\
 & \quad = -V_1(H_0-z_0 I_{\cH})^{-1}V_2^*\varphi_m-\varphi_m ,  
\end{align}
and with the help of \eqref{assu}  with $j=m$, the induction hypothesis,  
and \eqref{useit} one obtains
 \begin{align}
 & -V_1(H_0-z_0 I_{\cH})^{-1}(H-z_0 I_{\cH})\wti f_m    \no \\
 & \quad = \sum_{\ell = 1}^m V_1(H_0-z_0 I_{\cH})^{-(\ell +1)}V_2^*\varphi_{m- \ell } \no \\
 & \quad = \sum_{\ell = 1}^m V_1(H_0-z_0 I_{\cH})^{-(\ell +1)}V_2^*V_1 f_{m- \ell }   \no \\
 & \quad = -V_1(H_0-z_0 I_{\cH})^{-1}f_{m-1}.
 \end{align}
Hence
\begin{equation}\lb{opop}
 V_1(H_0-z_0 I_{\cH})^{-1}\big[(H-z_0 I_{\cH})\wti f_m-f_{m-1} \big]
=0.
\end{equation}
Therefore also 
\begin{equation}
 V_2^*V_1(H_0-z_0 I_{\cH})^{-1}\big[(H-z_0 I_{\cH})\wti f_m-f_{m-1} \big] = 0.
\end{equation}
This in turn implies
\begin{equation}\label{e4.36}
(H-z_0 I_{\cH})(H_0-z_0 I_{\cH})^{-1}\big[(H-z_0 I_{\cH})\wti f_m-f_{m-1} \big] = (H-z_0 I_{\cH})\wti f_m-f_{m-1}.
\end{equation}
Using \eqref{fm} and \eqref{e4.36} one obtains 
\begin{align} 
& (H-z_0 I_{\cH})f_m = (H-z_0 I_{\cH})\wti f_m   \no \\
& \qquad -(H-z_0 I_{\cH})(H_0-z_0 I_{\cH})^{-1}\big[(H-z_0 I_{\cH})\wti f_m-f_{m-1} \big]   \no \\
& \quad =f_{m-1}. 
\end{align} 
Moreover, with \eqref{fm} and \eqref{opop} one deduces 
$V_1 f_m = V_1\wti f_m = \varphi_m$.
\end{proof}

For $k=1$, Theorem~\ref{t4.5} reduces to the classical Birman--Schwinger principle.
In addition, it is shown in the next corollary that a resolvent-type formula as in \cite[Lemma B.1]{Fr18} 
or \cite[eq.~(2.13)]{GLMZ05} holds under our weak assumption Hypothesis~\ref{h4.1} for all those 
$z_0\in\rho(H_0)$ with $z_0\not\in\sigma_p(H)$ and $\psi\in\ran(H-z_0 I_{\cH})$.

\begin{corollary}\lb{c4.6}
Let $H_0$ and $H=H_0+V_2^*V_1$, $\dom(H) = \dom(H_0)$, be as in Hypothesis~\ref{h4.1} and let 
$z_0\in\rho(H_0)$. Then one has the following. \\[1mm] 
$(i)$ If $z_0\in\sigma_p(H)$ and $f_0$ is an eigenvector of $H$ then 
$0\in\sigma_p\big( I_{\cK} + V_1(H_0-z_0 I_{\cH})^{-1}V_2^*\big)$ and 
$V_1 f_0$ is an eigenvector of $\big[ I_{\cK} + V_1(H_0-z_0 I_{\cH})^{-1}V_2^*\big]$ with 
eigenvalue zero. \\[1mm] 
$(ii)$ If $0\in\sigma_p\big( I_{\cK} + V_1(H_0-z_0 I_{\cH})^{-1}V_2^*\big)$ and 
$\varphi_0$ is a corresponding eigenvector, then $z_0\in\sigma_p(H)$ and $f_0=-(H_0-z_0 I_{\cH})^{-1}V_2^*\varphi_0$ is an eigenvector of $H$. \\[1mm] 
$(iii)$ The geometric multiplicity of $z_0\in\sigma_p(H)$ and $0\in\sigma_p\big( I_{\cK} + V_1(H_0-z_0 I_{\cH})^{-1}V_2^*\big)$ coincide, that is,
\begin{equation}
m_g(z_0; H) = m_g\big(0; I_{\cK} + V_1(H_0-z_0 I_{\cH})^{-1}V_2^*\big). 
\lb{4.46} 
\end{equation}
$(iv)$ If $z_0\not\in\sigma_p(H)$ and $\psi \in \cH$, then
\begin{align}
\begin{split} 
& \psi\in\ran (H-z_0 I_{\cH}) \, \text{ if and only if } \\
& \quad V_1(H_0-z_0 I_{\cH})^{-1}\psi\in \ran \big( I_{\cK} + V_1(H_0-z_0 I_{\cH})^{-1}V_2^*\big)  \
\end{split} 
\end{align}
and for all $\psi\in\ran (H-z_0 I_{\cH})$ one has
\begin{align}
\begin{split} 
& (H-z_0 I_{\cH})^{-1}\psi=(H_0-z_0 I_{\cH})^{-1}\psi    \\
& \quad -(H_0-z_0 I_{\cH})^{-1}V_2^*\big[ I_{\cK} + V_1(H_0-z_0 I_{\cH})^{-1}V_2^*\big]^{-1} V_1(H_0-z_0 I_{\cH})^{-1}\psi.
\end{split} 
\end{align}
\end{corollary}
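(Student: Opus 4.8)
The plan is to reduce all four assertions to the case $k=1$ of Theorem~\ref{t4.5}, supplemented by elementary rearrangements of the identity $H - z_0 I_{\cH} = (H_0 - z_0 I_{\cH}) + V_2^* V_1$ on $\dom(H) = \dom(H_0)$. For item $(i)$, if $f_0$ is an eigenvector of $H$ at $z_0$, then $\{f_0\} \subset \dom(H)$ is a Jordan chain of length $1$ for $H$ at $z_0$, so Theorem~\ref{t4.5}$(i)$ shows that $\{V_1 f_0\} \subset \dom(V_2^*)$ is a Jordan chain of length $1$ for the function \eqref{bsfunction} at $z_0$; by Definition~\ref{d3.1} (equivalently, Corollary~\ref{c4.3} with $j=0$) this says precisely that $V_1 f_0 \neq 0$ and $\big(I_{\cK} + V_1(H_0 - z_0 I_{\cH})^{-1}V_2^*\big) V_1 f_0 = 0$, that is, $0 \in \sigma_p\big(I_{\cK} + V_1(H_0 - z_0 I_{\cH})^{-1}V_2^*\big)$ with eigenvector $V_1 f_0$. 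Item $(ii)$ is the $k=1$ case of Theorem~\ref{t4.5}$(ii)$: it produces $f_0 = -(H_0 - z_0 I_{\cH})^{-1}V_2^*\varphi_0$ as a nonzero Jordan chain of length $1$ for $H$ at $z_0$, hence an eigenvector of $H$, and records $V_1 f_0 = \varphi_0$.

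For item $(iii)$ I would observe that the two correspondences just exhibited, namely the linear map $f_0 \mapsto V_1 f_0$ from $\ker(H - z_0 I_{\cH})$ into $\ker\big(I_{\cK} + V_1(H_0 - z_0 I_{\cH})^{-1}V_2^*\big)$ and the linear map $\varphi_0 \mapsto -(H_0 - z_0 I_{\cH})^{-1}V_2^*\varphi_0$ in the reverse direction, are mutually inverse. That the first followed by the second is the identity on $\ker(H - z_0 I_{\cH})$ follows from $(H_0 - z_0 I_{\cH})f_0 = (H - z_0 I_{\cH})f_0 - V_2^* V_1 f_0 = -V_2^* V_1 f_0$, hence $f_0 = -(H_0 - z_0 I_{\cH})^{-1}V_2^*(V_1 f_0)$; that the second followed by the first is the identity is exactly the relation $V_1 f_0 = \varphi_0$ from Theorem~\ref{t4.5}$(ii)$ (equivalently, $-V_1(H_0 - z_0 I_{\cH})^{-1}V_2^*\varphi_0 = \varphi_0$). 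A linear bijection between the two kernels forces equality of their dimensions, which is \eqref{4.46}.

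For item $(iv)$ I would argue directly. Since $z_0 \notin \sigma_p(H)$, the contrapositive of $(ii)$ gives $0 \notin \sigma_p\big(I_{\cK} + V_1(H_0 - z_0 I_{\cH})^{-1}V_2^*\big)$, i.e.\ $I_{\cK} + V_1(H_0 - z_0 I_{\cH})^{-1}V_2^*$ is injective on $\dom(V_2^*)$, so its inverse is well defined on its range. If $\psi = (H - z_0 I_{\cH})f$ with $f \in \dom(H) \subseteq \dom(V_2^* V_1)$, then $V_1 f \in \dom(V_2^*)$ and, rearranging $(H_0 - z_0 I_{\cH})f = \psi - V_2^* V_1 f$ and applying $V_1(H_0 - z_0 I_{\cH})^{-1}$, one gets $\big(I_{\cK} + V_1(H_0 - z_0 I_{\cH})^{-1}V_2^*\big) V_1 f = V_1(H_0 - z_0 I_{\cH})^{-1}\psi$; this is the ``only if'' half of the range characterization, and by injectivity it identifies $V_1 f = \big[I_{\cK} + V_1(H_0 - z_0 I_{\cH})^{-1}V_2^*\big]^{-1} V_1(H_0 - z_0 I_{\cH})^{-1}\psi$, which upon substitution into $f = (H_0 - z_0 I_{\cH})^{-1}\psi - (H_0 - z_0 I_{\cH})^{-1}V_2^* V_1 f$ yields the stated resolvent-type formula. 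For the ``if'' half, given $\chi \in \dom(V_2^*)$ with $\big(I_{\cK} + V_1(H_0 - z_0 I_{\cH})^{-1}V_2^*\big)\chi = V_1(H_0 - z_0 I_{\cH})^{-1}\psi$, I would set $f = (H_0 - z_0 I_{\cH})^{-1}\psi - (H_0 - z_0 I_{\cH})^{-1}V_2^*\chi \in \dom(H_0) = \dom(H)$, check $V_1 f = \chi$ (the defining relation for $\chi$ rearranged), and conclude $(H - z_0 I_{\cH})f = (H_0 - z_0 I_{\cH})f + V_2^* V_1 f = (\psi - V_2^*\chi) + V_2^*\chi = \psi$, so $\psi \in \ran(H - z_0 I_{\cH})$.

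None of the individual computations is deep; the point requiring care is the domain bookkeeping, specifically that $V_1 f \in \dom(V_2^*)$ for every $f \in \dom(H)$ (which is precisely the inclusion $\dom(H_0) \subseteq \dom(V_2^* V_1)$ in Hypothesis~\ref{h4.1}), and the fact that under the mere hypothesis $z_0 \notin \sigma_p(H)$ the operator $I_{\cK} + V_1(H_0 - z_0 I_{\cH})^{-1}V_2^*$ is only densely defined and injective, not boundedly invertible, so the symbol $[\,\cdot\,]^{-1}$ appearing in $(iv)$ must be read as the single-valued inverse on its range; accordingly \eqref{4.46} and the identities in $(iv)$ are asserted only on that range.
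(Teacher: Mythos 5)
Your proof is correct and follows essentially the same route as the paper: items $(i)$ and $(ii)$ are the $k=1$ case of Theorem~\ref{t4.5}, item $(iii)$ is the observation (stated in the paper, spelled out by you) that $V_1$ restricts to a linear bijection between $\ker(H - z_0 I_{\cH})$ and $\ker\big(I_{\cK} + V_1(H_0 - z_0 I_{\cH})^{-1}V_2^*\big)$ with explicit inverse $\varphi_0 \mapsto -(H_0 - z_0 I_{\cH})^{-1}V_2^*\varphi_0$, and item $(iv)$ is the same forward rearrangement plus the same explicit $\xi$ (your $f$ equals the paper's $\xi$ once one notes $\chi = [I_{\cK}+V_1(H_0-z_0 I_{\cH})^{-1}V_2^*]^{-1}V_1(H_0-z_0 I_{\cH})^{-1}\psi$). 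Your closing caveat that the inverse in $(iv)$ is only a set-theoretic inverse on the range is a correct and worthwhile clarification, matching the paper's intent; one minor quibble is that $\dom(V_2^*)$ need not be dense under Hypothesis~\ref{h4.1}, but this does not affect any of your steps.
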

\begin{proof}
Items $(i)$ and $(ii)$ follow immediately from Theorem~\ref{t4.5} when considering the special case $k=1$.
It follows from the proof of Theorem~\ref{t4.5} that $V_1$ maps 
$\ker(H_0-z_0 I_{\cH})$ bijectively onto 
$\ker(I_{\cK} + V_1(H_0-z_0 I_{\cH})^{-1}V_2^*)$.
This implies $(iii)$.
For the statements in $(iv)$ we first assume that $\psi\in\ran (H-z_0 I_{\cH})$, that is, for some $\varphi\in\dom (H)$ one has
\begin{equation}
 \psi=(H-z_0 I_{\cH})\varphi=V_2^*V_1\varphi+(H_0-z_0 I_{\cH})\varphi.
\end{equation}
Then
\begin{align}
\begin{split} 
V_1(H_0-z_0 I_{\cH})^{-1}\psi &= V_1(H_0-z_0 I_{\cH})^{-1}\big(V_2^*V_1+(H_0-z_0 I_{\cH})\big)\varphi   \\
&=\big[I_{\cK} + V_1(H_0-z_0 I_{\cH})^{-1}V_2^*\big] V_1\varphi
\end{split} 
\end{align}
shows that $V_1(H_0-z_0 I_{\cH})^{-1}\psi\in\ran \big( I_{\cK} + V_1(H_0-z_0 I_{\cH})^{-1}V_2^*\big)$. Conversely, if $z_0\not\in\sigma_p(H)$ then by $(ii)$ the operator 
$\big[ I_{\cK} + V_1(H_0-z_0 I_{\cH})^{-1}V_2^*\big]$ is invertible and if
$\psi \in \cH$ with 
$V_1(H_0-z_0 I_{\cH})^{-1}\psi\in \ran \big( I_{\cK} + V_1(H_0-z_0 I_{\cH})^{-1}V_2^*\big)$ then the vector
\begin{align}
\begin{split} 
 \xi=(H_0-z_0 I_{\cH})^{-1}\psi & - (H_0-z_0 I_{\cH})^{-1}V_2^* 
 \big[I_{\cK} +V_1(H_0-z_0 I_{\cH})^{-1}V_2^*\big]^{-1}   \\ 
& \quad \times  V_1(H_0-z_0 I_{\cH})^{-1}\psi
\end{split}
\end{align}
is well-defined. A straightforward computation using $H-z_0 I_{\cH} = H_0-z_0 I_{\cH}+V_2^*V_1$ shows that 
$(H-z_0 I_{\cH})\xi=\psi$.
Hence $\psi\in\ran(H-z_0 I_{\cH})$. Moreover, 
as $z_0\not\in\sigma_p(H)$ this implies $\xi=(H-z_0 I_{\cH})^{-1}\psi$ and the last assertion follows from 
the definition of $\xi$.
\end{proof}

\begin{remark} \lb{r4.7} 
Let $z_0\not\in\sigma_p(H)$ and assume that $[ I_{\cK} + V_1(H_0-z_0 I_{\cH})^{-1}V_2^*]^{-1} \in \cB(\cK)$. 
Then by Corollary~\ref{c4.6} one has $\ran(H-z_0 I_{\cH})=\cH$ and the formula
 \begin{align}
 \begin{split} 
& (H-z_0 I_{\cH})^{-1}=(H_0-z_0 I_{\cH})^{-1}   \\
& \quad -(H_0-z_0 I_{\cH})^{-1}V_2^* 
 \big[ I_{\cK} + V_1(H_0-z_0 I_{\cH})^{-1}V_2^*\big]^{-1} V_1(H_0-z_0 I_{\cH})^{-1}   \lb{3.22}
 \end{split}
\end{align}
holds on $\cH$. Note that $V_1(H_0-z_0 I_{\cH})^{-1}\in\cB(\cH,\cK)$ since $V_1$ is closed by Hypothesis~\ref{h4.1}. 
Similarly $V_2^* \big[ I_{\cK} + V_1(H_0-z_0 I_{\cH})^{-1}V_2^*\big]^{-1} \in \cB(\cK,\cH)$ since $V_2^*$ is closed.
Therefore
the right-hand side of \eqref{3.22} is a bounded and everywhere defined operator in 
$\cH$ and hence also $(H-z_0 I_{\cH})^{-1}\in\cB(\cH)$. This implies, in particular, that $H$ is closed and $z_0\in\rho(H)$. \hfill $\diamond$ 
\end{remark}

Assume again that $H_0$ and $H=H_0+V_2^*V_1$ are as in Hypothesis~\ref{h4.1} and suppose that $\rho(H)\cap\rho(H_0)\not=\emptyset$.
Since $V_1$ is closed one has $V_1(H_0-zI_\cH)^{-1}\in\cB(\cH,\cK)$ for $z\in\rho(H_0)$ and $V_1(H-zI_\cH)^{-1}\in\cB(\cH,\cK)$ for $z\in\rho(H)$
and it is easy to verify that the resolvent formulas
\begin{equation}\lb{hh1}
 (H-zI_\cH)^{-1}=(H_0-zI_\cH)^{-1}-(H_0-zI_\cH)^{-1}V_2^*V_1(H-zI_\cH)^{-1}
\end{equation}
and
\begin{equation}\lb{hh2}
 (H-zI_\cH)^{-1}=(H_0-zI_\cH)^{-1}-(H-zI_\cH)^{-1}V_2^*V_1(H_0-zI_\cH)^{-1}
\end{equation}
hold for all $z\in\rho(H_0)\cap\rho(H)$.
Multiplying \eqref{hh1} from the left by $V_1$ and from the right by $V_2^*$ leads to
\begin{equation}
 \bigl[I_{\cK}+V_1(H_0-zI_\cH)^{-1}V_2^*\bigr] \bigl[I_{\cK}-V_1(H-zI_\cH)^{-1}V_2^*\bigr]=I_{\cK}
\end{equation}
and multiplying \eqref{hh2} from the left by $V_1$ and from the right by $V_2^*$ leads to
\begin{equation}
 \bigl[I_{\cK}-V_1(H-zI_\cH)^{-1}V_2^*\bigr] \bigl[I_{\cK}+V_1(H_0-zI_\cH)^{-1}V_2^*\bigr]=I_{\cK}
\end{equation}
for all $z\in\rho(H_0)\cap\rho(H)$. Note that the above identities hold on $\dom(V_2^*)$.  
In the next corollary we conclude under  an additional assumption that the individual factors are boundedly invertible.

\begin{corollary} \lb{c4.8}
Let $H_0$ and $H=H_0+V_2^*V_1$ be as in Hypothesis~\ref{h4.1}, suppose $\rho(H)\cap\rho(H_0)\not=\emptyset$ and assume, 
in addition, that
\begin{equation}\lb{assvv}
  \overline{V_1(H_0-zI_\cH)^{-1}V_2^*}\in\cB(\cK),\quad\overline{V_1(H-zI_\cH)^{-1}V_2^*}\in\cB(\cK), \quad z\in\rho(H)\cap\rho(H_0).
\end{equation}
Then $I_\cK+\overline{V_1(H_0-zI_\cH)^{-1}V_2^*}$ and $I_\cK-\overline{V_1(H-zI_\cH)^{-1}V_2^*}$ are boundedly invertible and one has 
\begin{equation}\lb{ia}
 \bigl[I_\cK+\overline{V_1(H_0-zI_\cH)^{-1}V_2^*}\bigr]^{-1}=I_\cK-\overline{V_1(H-zI_\cH)^{-1}V_2^*},\quad z\in\rho(H)\cap\rho(H_0),
\end{equation}
and
\begin{equation}\lb{ib}
 \bigl[I_\cK-\overline{V_1(H-zI_\cH)^{-1}V_2^*}\bigr]^{-1}=I_\cK+\overline{V_1(H_0-zI_\cH)^{-1}V_2^*},\quad z\in\rho(H)\cap\rho(H_0).
\end{equation}
The assumption \eqref{assvv} holds, in particular, when $V_2\in\cB(\cH,\cK)$. In this case one has $V_1(H_0-zI_\cH)^{-1}V_2^*\in\cB(\cK)$
and  $V_1(H-zI_\cH)^{-1}V_2^*\in\cB(\cK)$ for $z\in\rho(H)\cap\rho(H_0)$ and the closures in the identities \eqref{ia} and \eqref{ib} can be omitted.
\end{corollary}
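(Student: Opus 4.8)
The plan is to deduce \eqref{ia} and \eqref{ib} by passing to closures in the two factorization identities
\[
 \bigl[I_{\cK}+V_1(H_0-zI_\cH)^{-1}V_2^*\bigr]\bigl[I_{\cK}-V_1(H-zI_\cH)^{-1}V_2^*\bigr]=I_{\cK}
\]
and
\[
 \bigl[I_{\cK}-V_1(H-zI_\cH)^{-1}V_2^*\bigr]\bigl[I_{\cK}+V_1(H_0-zI_\cH)^{-1}V_2^*\bigr]=I_{\cK},
\]
which were established on $\dom(V_2^*)$ immediately above the statement of Corollary~\ref{c4.8}. The one preliminary point that genuinely uses the extra hypothesis \eqref{assvv} is that it forces $\dom(V_2^*)$ to be dense in $\cK$: since $(H_0-zI_\cH)^{-1}$ maps $\cH$ into $\dom(H_0)\subseteq\dom(V_2^*V_1)\subseteq\dom(V_1)$, the operator $V_1(H_0-zI_\cH)^{-1}V_2^*$ has domain exactly $\dom(V_2^*)$, while the domain of the closure of any operator is contained in the closure of its domain; as \eqref{assvv} makes this closure all of $\cK$, density of $\dom(V_2^*)$ follows. (Hypothesis~\ref{h4.1} alone, which only requires $\overline{\dom(V_2)}=\cH$, does not give this.) I would then set $T_0=\overline{V_1(H_0-zI_\cH)^{-1}V_2^*}\in\cB(\cK)$ and $T=\overline{V_1(H-zI_\cH)^{-1}V_2^*}\in\cB(\cK)$, noting that these agree with $V_1(H_0-zI_\cH)^{-1}V_2^*$ and $V_1(H-zI_\cH)^{-1}V_2^*$, respectively, on $\dom(V_2^*)$.

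Next I would fix $\psi\in\dom(V_2^*)$ and observe that $V_1(H-zI_\cH)^{-1}V_2^*\psi$ again lies in $\dom(V_2^*)$, because $(H-zI_\cH)^{-1}V_2^*\psi\in\dom(H)=\dom(H_0)\subseteq\dom(V_2^*V_1)$ forces $V_1(H-zI_\cH)^{-1}V_2^*\psi\in\dom(V_2^*)$. Consequently the left-hand side of the first identity above, evaluated at $\psi$, may be expanded and rewritten entirely in terms of $T_0$ and $T$, giving $(I_\cK+T_0)(I_\cK-T)\psi=\psi$; the second identity analogously yields $(I_\cK-T)(I_\cK+T_0)\psi=\psi$. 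Since $(I_\cK+T_0)(I_\cK-T)$ and $(I_\cK-T)(I_\cK+T_0)$ belong to $\cB(\cK)$ and coincide with $I_\cK$ on the dense subspace $\dom(V_2^*)$, they equal $I_\cK$ on all of $\cK$. Hence $I_\cK+T_0$ and $I_\cK-T$ are boundedly invertible with the inverses asserted in \eqref{ia} and \eqref{ib}.

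Finally, if $V_2\in\cB(\cH,\cK)$ then $V_2^*\in\cB(\cK,\cH)$, and since $V_1(H_0-zI_\cH)^{-1},V_1(H-zI_\cH)^{-1}\in\cB(\cH,\cK)$ (as already recorded before the statement, $V_1$ being closed), composing with $V_2^*$ shows $V_1(H_0-zI_\cH)^{-1}V_2^*$ and $V_1(H-zI_\cH)^{-1}V_2^*$ are themselves in $\cB(\cK)$; thus \eqref{assvv} holds automatically and all closures may be dropped. I do not expect a real obstacle in carrying this out: the only delicate step is securing density of $\dom(V_2^*)$, which is precisely why \eqref{assvv} (and not merely Hypothesis~\ref{h4.1}) is invoked before the standard ``extend-by-continuity from a dense set'' argument can be applied.
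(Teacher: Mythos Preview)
Your argument is correct and follows exactly the route the paper intends: the paper states Corollary~\ref{c4.8} without proof, having set up the two identities on $\dom(V_2^*)$ in the paragraph immediately preceding it, and your proposal carries out precisely the density-plus-continuity argument needed to pass from those identities to \eqref{ia} and \eqref{ib}. Your explicit observation that \eqref{assvv} forces $\dom(V_2^*)$ to be dense in $\cK$ (via $\dom(\overline{T})\subseteq\overline{\dom(T)}$) is a detail the paper leaves implicit, and your verification that $V_1(H-zI_\cH)^{-1}V_2^*\psi\in\dom(V_2^*)$ for $\psi\in\dom(V_2^*)$---so that the composition on $\dom(V_2^*)$ really does agree with the bounded product $(I_\cK+T_0)(I_\cK-T)$---is the one genuine bookkeeping step that makes the extension rigorous.
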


\begin{remark} \lb{r4.9} 
$(i)$ We emphasize that Hypothesis~\ref{h4.1}, in particular, permits the case where $\cH$ and $\cK$ differ. But it also includes the situation 
$\cK = \cH$, $V_2 = I_{\cH}$, and $V = V_1$, which then naturally leads to the non-symmetrized Birman--Schwinger operator family 
$I_{\cH} + V(H_0 - z I_{\cH})^{-1}$, $z \in \rho(H_0)$, in $\cH$. 
\\[1mm]
$(ii)$
A slightly more general situation than in $(i)$ consists of $\cK$ being a closed subspace of
$\cH$, $V_2=P_{\cK}$ the orthogonal projection onto $\cK$ and hence $V_2^*=\iota_{\cK}$ 
the canonical embedding of $\cK$ in $\cH$, and $V=V_1$ a closed operator from $\cH$ into $\cK$, leading to 
the Birman--Schwinger operator family 
\begin{equation}\lb{bssmall}
I_{\cK} + V(H_0 - z I_{\cH})^{-1}\iota_\cK, \quad z \in \rho(H_0),
\end{equation}
in $\cK\subset\cH$.
For the characterization of points in $\sigma_p(H)\cap\rho(H_0)$ as in Corollary~\ref{c4.6}~(i) one may also use the corresponding extended
Birman--Schwinger operator family in $\cH= \cK \oplus \cK^{\bot}$ given by
\begin{equation}\lb{bsbig}
I_{\cH} + \big[V(H_0 - z I_{\cH})^{-1}\iota_\cK \oplus 0_{\cK^{\bot}}\big], \quad z \in \rho(H_0),
\end{equation}
since $0$ is an eigenvalue of the operator in \eqref{bssmall} if and only if $0$ is an eigenvalue of the operator in \eqref{bsbig} (cf.\ Example~\ref{e4.11}). \\[1mm]
$(iii)$ The Birman--Schwinger principle has been derived in Corollary~\ref{c4.6}\,$(i)$,~$(ii)$ under very general conditions on $H_0$ and $H$, in particular, the latter was not assumed to be closed. However, one can also consider very general situations in a rather different direction going far beyond relatively bounded perturbations $V$ with respect to $H_0$ so that the domain of $H$ can no longer be compared to that of $H_0$. In fact, one can even consider situations more general than quadratic form perturbations, following the lead in Kato's paper \cite{Ka66}. Indeed, Konno and Kuroda \cite{KK66} considered this general setup in the case where $H_0$ is self-adjoint, and \cite{BGHN17},  \cite{GHN15}, \cite{GLMZ05} in the situation where $H_0$ and $H$ are non-self-adjoint.  In particular, formula \eqref{4.46} extends to such a generalized setup. \hfill $\diamond$
\end{remark}

We conclude with a brief discussion of multi-dimensional Schr\"odinger operators.

\begin{example} \lb{e4.11} 
 Consider the selfadjoint operator $H_0=-\Delta$, $\dom (H_0) = H^2(\bbR^n)$, in $\cH=L^2(\bbR^n; d^nx)$ and let $v:\bbR^n\rightarrow\bbC$ be a $($Lebesgue\,$)$ measurable function such that 
 \begin{equation}\lb{h2ass}
 vh\in L^2(\bbR^n; d^nx) \, \text{ for all } \, h\in H^2(\bbR^n).
 \end{equation}
 Next define the functions $v_1$ and $v_2$ by
 \begin{equation}
  v_1= \sgn (v) |v|^{1/2}  \, \text{ and } \, v_2= |v|^{1/2} , \quad 
  \sgn (v(x)):=\begin{cases} v(x)/|v(x)|, & \text{if } v(x)\not=0,\\ 0, & \text{if } v(x)=0.\end{cases}
 \end{equation}
The maximal multiplication operators associated to $v$ and $v_1$ in $L^2(\bbR^n; d^nx)$ are denoted by $V$ and $V_1$, respectively. Furthermore, let
\begin{equation}
 \cK_0=\big\{v_2 f \, \big| \, f\in L^2(\bbR^n; d^nx) v_2 f\in L^2(\bbR^n; d^nx) \big\}, 
\end{equation}
and 
\begin{equation}
 \cK:=\overline\cK_0\subseteq  L^2(\bbR^n; d^nx).
\end{equation}
The maximal multiplication operator associated to $v_2$ is viewed as a densely defined operator from
$L^2(\bbR^n; d^nx)$ to $\cK$ and will be denoted by $V_2$; the adjoint from $\cK$ to $L^2(\bbR^n;d^nx)$ is the maximal multiplication operator $V_2^* = v_2$. One observes that $\ran (V_1) \subseteq \cK$ and hence also $V_1$ can 
be viewed as a maximal multiplication operator
from $L^2(\bbR^n; d^nx)$ to $\cK$. In this situation one has $V=V_2^*V_1$ and 
$\dom (H_0) \subseteq \dom (V_2^*V_1)$ by \eqref{h2ass}. Therefore, Hypothesis~\ref{h4.1}
is satisfied and Theorem~\ref{t4.5} can be applied to give a description of the Jordan chains of the Schr\"{o}dinger operator in $L^2(\bbR^n; d^nx)$
\begin{equation}\lb{schr}
 H=H_0+V,\quad \dom (H) = H^2(\bbR^n),
\end{equation}
in terms of the Birman--Schwinger operator in $\cK$, 
\begin{equation}
I_\cK+\sgn (v) |v|^{1/2}  \big(-\Delta-z I_{L^2(\bbR^n; d^nx)}\big)^{-1}|v|^{1/2}.    \lb{4.62} 
\end{equation}
In fact, writing $L^2(\bbR^n; d^nx) = \cK \oplus \cK^{\bot}$, the operator in \eqref{4.62} naturally extends to the 
classical Birman--Schwinger operator in $L^2(\bbR^n; d^nx)$ of the type 
\begin{equation}
 I_{\cK \oplus \cK^{\bot}}+\sgn (v) |v|^{1/2}  \big(-\Delta-z I_{L^2(\bbR^n; d^nx)}\big)^{-1}|v|^{1/2} \oplus 0_{\cK^{\bot}}    \lb{4.63} 
\end{equation}
$($with $0_{\cK^{\bot}}$ the zero operator in $\cK^{\bot}$$)$, which is typically just denoted by  
\begin{equation}
I_{L^2(\bbR^n; d^nx)}+ \sgn (v) |v|^{1/2}  \big(-\Delta-z I_{L^2(\bbR^n; d^nx)}\big)^{-1}|v|^{1/2};    \lb{4.64} 
\end{equation}
see Remark~\ref{r4.9}\,$(ii)$.
Under the present weak assumptions on the potential
$V$, the Schr\"{o}dinger operator $H$ in \eqref{schr} may not be closed and its spectrum may cover the whole complex plane. Of course, under additional assumption on $V$ the situation is markedly different. For instance, assume that $n=1,2,3$ and $V\in L^2(\bbR^n; d^nx)$, or $n\geq 4$ and $V\in L^p(\bbR^n)$ with $p>n/2$. Then by \cite[Theorem 11.2.11]{Da07}, $H$ is a relatively compact perturbation of $H_0$ and it follows that
the spectrum of $H$ in $\bbC \backslash [0,\infty)$ consists of discrete eigenvalues with finite algebraic multiplicity. The algebraic eigenspaces can then be characterized with the help of Theorem~\ref{t4.5}.
\end{example}

\section{The Index of Meromorphic Operator-Valued Functions and Applications to Algebraic Multiplicities of Analytic Families} \lb{s5}

In the following we briefly recall the notion of the index of meromorphic operator-valued functions and discuss applications to the algebraic multiplicity of a zero of finite-type of an analytic operator-valued function following 
\cite{BGHN16}, \cite{BGHN17}, \cite{GHN15} \cite{GS71} \cite[Sects.\ XI.8, XI.9]{GGK90}, \cite[Ch.\ 4]{GL09}.  

We begin with some preparatory material taken from \cite{BGHN16}. 
Let $\cH$ be a separable complex Hilbert space,
assume that $\Omega \subseteq \mathbb C$ is an open set, and let $M(\dott)$ be a $\cB(\cH)$-valued 
meromorphic function on $\Omega$ that has the norm convergent Laurent expansion around 
$z_0 \in \Omega$ of the form
\begin{equation}\lb{mmm}
 M(z) = \sum_{k= - N_0}^\infty (z - z_0)^{k} M_k(z_0),\quad z\in D(z_0;\varepsilon_0)\backslash\{z_0\}, 
\end{equation}
where $M_k(z_0) \in \cB(\cH)$, $k\in\mathbb Z$, $k\geq -N_0$ and $\varepsilon_0>0$ is sufficiently small such that
the punctured open disc $D(z_0;\varepsilon_0)\backslash\{z_0\}$
is contained in $\Omega$. The {\it principal $($or singular\,$)$ part} ${\rm pp}_{z_0} \{M(z)\}$ of $M(\dott)$ at $z_0$ is defined as the finite sum 
\begin{equation}\lb{koko}
 {\rm pp}_{z_0} \{M(z)\}=\sum_{k= - N_0}^{-1} (z - z_0)^{k} M_k(z_0).
\end{equation}

\begin{definition} \lb{d5.1}
Let $\Omega \subseteq\bbC$ be an open set and let $M(\dott)$ be a $\cB(\cH)$-valued 
meromorphic function on $\Omega$. Then  
$M(\dott)$ is called {\it finitely meromorphic at $z_0 \in \Omega$} if $M(\dott)$ is 
analytic on the punctured disk $D(z_0;\varepsilon_0)\backslash\{z_0\} \subset \Omega$ 
with sufficiently small $\varepsilon_0 > 0$, and the principal part ${\rm pp}_{z_0} \{M(z)\}$ of $M(\dott)$ at $z_0$ is of finite rank, that is, 
the principal part of $M(\dott)$ is of the type \eqref{koko}, and one has 
\begin{equation}\lb{lkj}
M_k(z_0) \in \cF(\cH), \quad -N_0 \leq k \leq -1. 
\end{equation}
The function $M(\dott)$ is called {\it finitely meromorphic on $\Omega$} if it is meromorphic 
on $\Omega$ and finitely meromorphic at each of its poles.
\end{definition}

In the context of Theorem~\ref{t3.10}, that is, 
$M \colon \Omega \to \cB(\cH)$ is analytic on $\Omega$ and 
$z_0 \in \Omega$ is a zero of finite-type of $M(\dott)$,  
an application of the analytic Fredholm Theorem~\ref{tA.2} 
(cf.\, e.g., \cite[Sect.~4.1]{GL09},  \cite{GS71}, \cite{Ho70}, \cite[Theorem\ VI.14]{RS80}, \cite{St68}) 
shows that $M(\dott)^{-1}$ is finitely meromorphic at $z_0$.  

Assume that $M_j(\dott)$, $j=1,2$, are $\cB(\cH)$-valued 
meromorphic functions on $\Omega$ that are both finitely meromorphic at $z_0 \in \Omega$,
choose $\varepsilon_0 > 0$ such that \eqref{mmm} holds for both functions $M_j(\dott)$, and let $0<\varepsilon<\varepsilon_0$.
Then by \cite[Lemma\ XI.9.3]{GGK90} or \cite[Proposition\ 4.2.2]{GL09} also the functions 
$M_1(\dott) M_2(\dott)$ and $M_2(\dott) M_1(\dott)$ are finitely meromorphic at 
$z_0 \in \Omega$, the operators 
\begin{equation}\lb{ws}
\ointctrclockwise_{\partial D(z_0; \varepsilon)} d \zeta \, M_1(\zeta) M_2(\zeta)  \quad\text{and}\quad \ointctrclockwise_{\partial D(z_0; \varepsilon)} d \zeta \, M_2(\zeta) M_1(\zeta)
\end{equation}
are both of finite rank and the identities
\begin{align}
& {\tr}_{\cH} \bigg(\ointctrclockwise_{\partial D(z_0; \varepsilon)} d \zeta \, 
M_1(\zeta) M_2(\zeta)\bigg) 
= {\tr}_{\cH} \bigg(\ointctrclockwise_{\partial D(z_0; \varepsilon)} d \zeta \, 
M_2(\zeta) M_1(\zeta)\bigg),   \lb{wss} \\
& {\tr}_{\cH} \big({\rm pp}_{z_0} \, \{M_1(z) M_2(z)\}\big) = 
{\tr}_{\cH} \big({\rm pp}_{z_0} \, \{M_2(z) M_1(z)\}\big), \quad 
0 < |z-z_0| < \varepsilon_0,     \lb{4.7}
\end{align}
hold; here the symbol $ \ointctrclockwise$ denotes the contour integral and
$\partial D(z_0; \varepsilon)$ is the counterclockwise oriented circle 
with radius $\varepsilon$ centered at $z_0$. 

Next, we take a closer look at resolvents of closed operators providing prime examples of finitely meromorphic 
operator-valued functions, but first we introduce the extended resolvent set $\wti \rho(A)$ of $A$ as follows:

\begin{definition} \lb{d5.2}
Let $A$ be a closed operator in $\cK$. Then the {\it extended resolvent set of $A$} is defined by
\begin{align}
\begin{split} 
\wti \rho(A) = \rho(A) \cup \sigma_d(A).     \lb{5.7} 
\end{split} 
\end{align}
\end{definition}

One verifies that 
\begin{equation}
\rho(A), \; \wti \rho(A) \, \text{ are open subsets of $\bbC$.}    \lb{5.8}
\end{equation}
Since $ \wti \rho(A) \subset \bbC$ is open, it's connected components are open and at most countable 
(cf., e.g., \cite[Theorem~2.9]{Co78}).

Next, suppose $A$ is a closed operator in $\cK$. 
If $\lambda_0\in \wti \rho(A)$, then, according to \cite[Sect.~III.6.5]{Ka80}, the singularity structure of
the resolvent of $A$ near $\lambda_0$ is of the very special norm convergent meromorphic type, 
\begin{align}
\begin{split} 
(A - zI_{\cK})^{-1}&=(\lambda_0-z)^{-1}P(\lambda_0;A)
+\sum_{k=1}^{\mu(\lambda_0;A)} (\lambda_0-z)^{-k-1}
(-1)^k F(\lambda_0;A)^k   \\
& \quad +\sum_{k=0}^\infty (\lambda_0-z)^k (-1)^k
S(\lambda_0;A)^{k+1} \lb{6.7}
\end{split} 
\end{align}
for $z \in \bbC$ in a sufficiently small punctured neighborhood of $\lambda_0$. Here
\begin{align}
F(\lambda_0;A)&=(A - \lambda_0 I_{\cK})P(\lambda_0;A)=\f{1}{2\pi i}
\ointctrclockwise_{\partial D(\lambda_0; \varepsilon) } d\zeta \,
(\lambda_0-\zeta)(A - \zeta I_{\cK})^{-1} \in \cB(\cK),
\lb{6.8} \\
S(\lambda_0;A)&=-\f{1}{2\pi i} \ointctrclockwise_{\partial D(\lambda_0; \varepsilon) }
d\zeta \, (\lambda_0-\zeta)^{-1}(A - \zeta I_{\cK})^{-1} \in
\cB(\cK), \lb{6.9}
\end{align}
and $F(\lambda_0;A)$ is nilpotent with its range contained in that of $P(\lambda_0;A)$,
\begin{equation}
F(\lambda_0;A)=P(\lambda_0;A) F(\lambda_0;A) = F(\lambda_0;A)P(\lambda_0;A). \lb{6.10}
\end{equation}
Of course, $P(\lambda_0;A) = F(\lambda_0;A) = 0$ if $\lambda_0 \in \rho(A)$. Moreover,
\begin{align}
\begin{split}
&S(\lambda_0;A) A \subseteq AS(\lambda_0;A), \quad
(A - \lambda_0 I_{\cK}) S(\lambda_0;A)
= I_{\cK}-P(\lambda_0;A), \\
&S(\lambda_0;A)P(\lambda_0;A)=
P(\lambda_0;A)S(\lambda_0;A)=0. \lb{6.11}
\end{split}
\end{align}
Finally,
\begin{align}
&\mu(\lambda_0;A) + 1 \leq m_a(\lambda_0;A)=\dim(\ran(P(\lambda_0;A))),    \lb{6.12} \\
&m_a(\lambda_0;A) = {\tr}_{\cK}(P(\lambda_0;A)), \quad F(\lambda_0;A)^{m_a(\lambda_0;A)} = 0. \lb{6.13}
\end{align}

In particular, since $F(\lambda_0;A)$ (and hence, $F(\lambda_0;A)^k$, $k \in \bbN$) is of finite rank and nilpotent, \eqref{6.7}, \eqref{6.12}, and \eqref{6.13} prove the following fact. 

\begin{lemma} \lb{l6.2}
Assume that $A$ is closed in $\cK$. \\[1mm]
$(i)$ Then the map 
$z \mapsto (A - z I_{\cK})^{-1}$ is analytic on $\rho(A)$ and finitely meromorphic on $\wti \rho(A)$. \\[1mm]
$(ii)$ 
If $\cH$ is a Hilbert space, $S \in\cB(\cK,\cH)$, $T \in\cB(\cH,\cK)$, and $z_0 \in \sigma_d(A)$, then the 
$\cB(\cH)$-valued function 
 \begin{equation}\lb{sts}
  z\mapsto S(A - zI_{\cK})^{-1} T,  \quad z\in \wti \rho(A),
 \end{equation}
is finitely meromorphic at $z_0$. 
\end{lemma}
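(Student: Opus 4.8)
The plan is to read both statements directly off the norm-convergent Laurent expansion \eqref{6.7} of the resolvent, using only the finite-rank and nilpotency information already recorded in \eqref{6.10}, \eqref{6.12}, and \eqref{6.13}. For $(i)$, analyticity of $z \mapsto (A - zI_{\cK})^{-1}$ on $\rho(A)$ is the standard first-resolvent-identity argument, so the only thing to verify is that the resolvent is finitely meromorphic at each point $\lambda_0 \in \sigma_d(A)$. By the definition \eqref{2.7} of $\sigma_d(A)$, the Riesz projection $P(\lambda_0;A)$ has finite rank; by \eqref{6.10} the range of $F(\lambda_0;A)$, and hence the range of each power $F(\lambda_0;A)^k$, is contained in $\ran(P(\lambda_0;A))$, so every $F(\lambda_0;A)^k$ is of finite rank as well. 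The expansion \eqref{6.7} then exhibits the principal part of $(A - zI_{\cK})^{-1}$ at $\lambda_0$ as
\[
(\lambda_0 - z)^{-1} P(\lambda_0;A) + \sum_{k=1}^{\mu(\lambda_0;A)} (\lambda_0 - z)^{-k-1} (-1)^k F(\lambda_0;A)^k,
\]
which is a \emph{finite} sum of finite-rank operators, the sum being finite because $\mu(\lambda_0;A) + 1 \leq m_a(\lambda_0;A) < \infty$ by \eqref{6.12} (consistently with $F(\lambda_0;A)^{m_a(\lambda_0;A)} = 0$ from \eqref{6.13}). Hence $(A - zI_{\cK})^{-1}$ is finitely meromorphic at $\lambda_0$ in the sense of Definition~\ref{d5.1}, and since $\wti\rho(A) = \rho(A) \cup \sigma_d(A)$ and the only possible poles in $\wti\rho(A)$ lie in $\sigma_d(A)$, the resolvent is finitely meromorphic on $\wti\rho(A)$.

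For $(ii)$, I would fix $z_0 \in \sigma_d(A)$ and simply multiply \eqref{6.7} on the left by $S \in \cB(\cK,\cH)$ and on the right by $T \in \cB(\cH,\cK)$. Because \eqref{6.7} converges in operator norm on a punctured disc about $z_0$ and $S,T$ are bounded, the sandwiched series converges in the norm of $\cB(\cH)$ on that punctured disc, so $z \mapsto S(A - zI_{\cK})^{-1}T$ is meromorphic at $z_0$ with principal part equal to $S$ applied on the left and $T$ on the right of the principal part from $(i)$. That principal part is a finite linear combination of operators of the form $S\, P(z_0;A)\, T$ and $S\, F(z_0;A)^k\, T$, each of which is of finite rank since a finite-rank operator composed with bounded operators on either side remains finite-rank. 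Therefore $z \mapsto S(A - zI_{\cK})^{-1}T$ is finitely meromorphic at $z_0$.

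There is no genuine obstacle here; the proof is essentially bookkeeping on \eqref{6.7}. The only two points that need a moment's care are invoking the correct definition \eqref{2.7} of $\sigma_d(A)$ to guarantee that $P(\lambda_0;A)$ (and hence every $F(\lambda_0;A)^k$) is of finite rank, and using that \eqref{6.7} converges in \emph{norm} rather than merely strongly, so that pre- and post-composition with the bounded operators $S$ and $T$ preserves convergence and leaves the orders of the poles unchanged.
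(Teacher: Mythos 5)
Your argument is correct and is essentially the one the paper intends: the paper introduces Lemma~\ref{l6.2} precisely as the conclusion one reads off from the Laurent expansion \eqref{6.7} together with the finite-rank and nilpotency facts \eqref{6.10}--\eqref{6.13}, and part $(ii)$ follows by bounded pre- and post-composition exactly as you describe. No discrepancy to report.
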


To introduce the notion of an index of $M(\dott)$ we need the following set of assumptions:

\begin{hypothesis} \lb{h5.2} 
Let $\Omega \subseteq \bbC$ be open and connected, and $\Omega_d \subset \Omega$ a discrete set 
$($i.e., a set without limit points in $\Omega$$)$. Suppose that $M:\Omega \backslash \Omega_d \to \cB(\cH)$ 
is analytic and that $M(\dott)$ is finitely meromorphic on $\Omega$. In addition, suppose that 
\begin{equation}
M(z) \in \Phi(\cH) \, \text{ for all } \, z\in\Omega \backslash \Omega_d,    \lb{4.1} 
\end{equation}  
and for all $z_0 \in \Omega_d$ there is a norm convergent Laurent expansion around $z_0$
of the form
\begin{equation}
M(z) = \sum_{k=-N_0}^{\infty} (z-z_0)^k M_k(z_0), \quad 
0 < |z - z_0| < \varepsilon_0,    \lb{4.2}
\end{equation}
for some $N_0 = N_0(z_0) \in \bbN$ and some $0 < \varepsilon_0 = \varepsilon_0(z_0)$ 
sufficiently small, with  
\begin{align}\lb{4.2a} 
\begin{split} 
& M_{-k}(z_0) \in \cF(\cH), \; 1 \leq k \leq N_0(z_0),  \quad  M_0(z_0) \in \Phi(\cH), \\
& M_k(z_0) \in \cB(\cH), \; k \in \bbN. 
\end{split} 
\end{align} 
Finally, given $z_0\in \Omega_d$, assume that $M(\dott)$ is boundedly invertible on 
$D(z_0; \varepsilon_0) \backslash \{z_0\}$ for some $0 < \varepsilon_0$ sufficiently small. 
\end{hypothesis}

Recalling the meromorphic Fredholm Theorem~\ref{tA.4}, enables one to make the following definition of 
the index of $M(\dott)$:

\begin{definition} \lb{d5.3} 
Assume Hypothesis~\ref{h5.2},  let $z_0 \in \Omega$, and suppose that $0 < \varepsilon$ is sufficiently small. 
Then the {\it index of $M(\dott)$ with respect to the counterclockwise 
oriented circle $\partial D(z_0; \varepsilon)$}, $\ind_{\partial D(z_0; \varepsilon)}(M(\cdot))$, is defined by 
 \begin{align}
\begin{split}
\ind_{\partial D(z_0; \varepsilon)}(M(\cdot)) &= {\tr}_{\cH}\bigg(\f{1}{2\pi i} 
\ointctrclockwise_{\partial D(z_0; \varepsilon)} d\zeta \, 
M'(\zeta) M(\zeta)^{-1}\bigg)     \lb{4.8} \\
& = {\tr}_{\cH}\bigg(\f{1}{2\pi i} 
\ointctrclockwise_{\partial D(z_0; \varepsilon)} d\zeta \, 
M(\zeta)^{-1} M'(\zeta)\bigg), \quad 0 < \varepsilon < \varepsilon_0. 
\end{split}
\end{align}  
\end{definition}

A special case of the logarithmic residue theorem proved by Gohberg and Sigal \cite[Theorems~2.1, 2.1']{GS71} (see also \cite[Sect.~XI.9]{GGK90} and \cite[Sect.~4.4]{GL09}) then reads as follows.

\begin{theorem} \lb{t5.4}
Let $\Omega \subseteq \bbC$ be open and $z_0 \in \Omega$. Suppose that $M(\dott) : \Omega \to \cB(\cH)$ 
satisfies Hypothesis~\ref{h5.2}. Then 
\begin{equation}
{\ind}_{\partial D(z_0; \varepsilon)}(M(\dott)) \in \bbZ.
\end{equation}
In addition, assume that $M(\dott) : \Omega \to \cB(\cH)$ is analytic at $z_0$. Then
\begin{equation}
m_a(z_0; M(\dott)) = {\ind}_{\partial D(z_0; \varepsilon)}(M(\dott)) \in \bbN_0.    \lb{4.20}
\end{equation}
\end{theorem}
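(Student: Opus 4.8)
The assertion is a form of the Gohberg--Sigal logarithmic residue theorem, and the plan is to reduce the evaluation of $\ind_{\partial D(z_0;\varepsilon)}(M(\dott))$ to an explicit diagonal model via a local factorization, and then to identify the resulting integer with $m_a(z_0;M(\dott))$ in the analytic case. First I would invoke the meromorphic Fredholm Theorem~\ref{tA.4} together with the Gohberg--Sigal local factorization (the meromorphic analogue of Theorem~\ref{t3.10}; cf.\ \cite{GS71}, \cite[Sect.~XI.9]{GGK90}, \cite[Sect.~4.4]{GL09}): under Hypothesis~\ref{h5.2} there exist $\varepsilon>0$, analytic and boundedly invertible operator-valued functions $E_j:D(z_0;\varepsilon)\to\cB(\cH)$, $j=1,2$, mutually disjoint projections $P_0,P_1,\dots,P_r$ in $\cH$ with $\dim(\ran(P_j))=1$ for $1\leq j\leq r$ and $\bigoplus_{j=0}^r P_j=I_{\cH}$, and integers $\kappa_1\leq\dots\leq\kappa_r$, such that
\begin{equation}\lb{t5.4-fact}
 M(z)=E_1(z)D(z)E_2(z),\qquad D(z)=P_0+\sum_{j=1}^r(z-z_0)^{\kappa_j}P_j,\qquad z\in D(z_0;\varepsilon)\backslash\{z_0\}.
\end{equation}
If $M(\dott)$ is analytic at $z_0$ and $\ker(M(z_0))\neq\{0\}$, so that $z_0$ is a zero of finite-type of $M(\dott)$ in the sense of Definition~\ref{d3.9}, then \eqref{t5.4-fact} is precisely Theorem~\ref{t3.10} with $\kappa_j=\rho_j\in\bbN$. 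One also records that, owing to the finite-rank conditions built into Hypothesis~\ref{h5.2}, the operator-valued contour integrals occurring in Definition~\ref{d5.3} have finite-rank values, so that all traces below are meaningful.

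Next I would substitute \eqref{t5.4-fact} into Definition~\ref{d5.3}. From the identity $M'M^{-1}=E_1'E_1^{-1}+E_1(D'D^{-1})E_1^{-1}+E_1 D(E_2'E_2^{-1})D^{-1}E_1^{-1}$ the three summands are treated in turn. The first is analytic on $D(z_0;\varepsilon)$, so $\ointctrclockwise_{\partial D(z_0;\varepsilon)}d\zeta\,E_1'(\zeta)E_1(\zeta)^{-1}=0$ by Cauchy's theorem. For the second, the cyclicity identity \eqref{wss} for traces of contour integrals of finitely meromorphic functions gives
\begin{equation}
 {\tr}_{\cH}\bigg(\ointctrclockwise_{\partial D(z_0;\varepsilon)}d\zeta\,E_1(D'D^{-1})E_1^{-1}\bigg)={\tr}_{\cH}\bigg(\ointctrclockwise_{\partial D(z_0;\varepsilon)}d\zeta\,D'D^{-1}\bigg),
\end{equation}
and since $D'(z)D(z)^{-1}=\sum_{j=1}^r\kappa_j(z-z_0)^{-1}P_j$, the right-hand side equals $2\pi i\sum_{j=1}^r\kappa_j$. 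For the third summand, set $B:=E_2'E_2^{-1}$, which is analytic at $z_0$; \eqref{wss} again reduces its contribution to ${\tr}_{\cH}(\ointctrclockwise_{\partial D(z_0;\varepsilon)}d\zeta\,DBD^{-1})$, and here one checks that for $z\neq z_0$ the operator $D(z)B(z)D(z)^{-1}-B(z)$ is of finite rank, has vanishing $(P_0,P_0)$-block with respect to $\cH=\bigoplus_{j=0}^r\ran(P_j)$, and has a traceless finite-dimensional complementary block by similarity-invariance of the finite-dimensional trace; since moreover $\ointctrclockwise_{\partial D(z_0;\varepsilon)}d\zeta\,B(\zeta)=0$, the third contribution vanishes. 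Dividing by $2\pi i$ yields $\ind_{\partial D(z_0;\varepsilon)}(M(\dott))=\sum_{j=1}^r\kappa_j\in\bbZ$, proving the first assertion.

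For the second assertion, assume in addition that $M(\dott)$ is analytic at $z_0$. Then $M(z)\in\Phi(\cH)$ for all $z\in D(z_0;\varepsilon_0)$, the Fredholm index of $M(\dott)$ is locally constant and vanishes wherever $M(\dott)$ is boundedly invertible, so $\ind(M(z_0))=0$. If $\ker(M(z_0))=\{0\}$, then $M(z_0)$ is boundedly invertible, $z_0$ is not a zero of $M(\dott)$, one may take $D(\dott)\equiv I_{\cH}$ in \eqref{t5.4-fact}, and both sides of \eqref{4.20} equal $0$. Otherwise $z_0$ is a zero of finite-type, Theorem~\ref{t3.10} provides \eqref{t5.4-fact} with $\kappa_j=\rho_j\in\bbN$ and $m_a(z_0;M(\dott))=\sum_{j=1}^r\rho_j$, and combining this with the computation from the first part gives $m_a(z_0;M(\dott))=\ind_{\partial D(z_0;\varepsilon)}(M(\dott))\in\bbN_0$.

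The step I expect to demand the most care is the conjugation term $\ointctrclockwise_{\partial D(z_0;\varepsilon)}d\zeta\,DBD^{-1}$: one must verify that every operator-valued contour integral in sight is of finite rank (so that the traces in Definition~\ref{d5.3} are legitimate) and that this particular integral has vanishing trace even though $D(\dott)$ is singular at $z_0$. This is precisely where the finite-rank structure of Hypothesis~\ref{h5.2} and the trace identities \eqref{wss}, \eqref{4.7} are used; everything else is bookkeeping with Cauchy's theorem and the diagonal model $D(\dott)$ of \eqref{t5.4-fact}.
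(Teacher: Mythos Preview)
The paper does not actually prove Theorem~\ref{t5.4}; it merely cites it as a special case of the Gohberg--Sigal logarithmic residue theorem (\cite[Theorems~2.1, 2.1']{GS71}, \cite[Sect.~XI.9]{GGK90}, \cite[Sect.~4.4]{GL09}). Your proposal reconstructs precisely the standard argument from those sources---reduce to the local Smith--type factorization \eqref{t5.4-fact}, expand the logarithmic derivative of a product, kill the analytic $E_1'E_1^{-1}$ term by Cauchy, compute the diagonal contribution $D'D^{-1}=\sum_j\kappa_j(z-z_0)^{-1}P_j$ directly, and dispose of the conjugation term via \eqref{wss} and a block computation showing $\tr_\cH(DBD^{-1}-B)=0$ pointwise---and this argument is correct. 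Your caution about the conjugation term is well placed: the key observations are that $DBD^{-1}-B$ has vanishing $(P_0,P_0)$-block and that the $(j,j)$-entries of $D_1B_{11}D_1^{-1}$ coincide with those of $B_{11}$ (since $D_1$ is diagonal), so the finite-rank difference is traceless for each $z\neq z_0$; combined with $\ointctrclockwise B=0$ and the finitely meromorphic structure needed to invoke \eqref{wss}, this closes the argument.
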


While the algebraic multiplicity computations in Examples~\ref{e3.4}, \ref{e3.6}--\ref{e3.8}, employing Jordan chains of 
maximal length, is somewhat cumbersome, we now revisit these computations using equality \eqref{4.20} and show that the 
latter yields algebraic multiplicities in an effortless manner.

\begin{example}[Examples~\ref{e3.4}, \ref{e3.6}--\ref{e3.8} revisited] \lb{e5.5}
Noting that the the matrix functions $A_1(\dott)$, $A_2(\dott)$, $A_k(\dott)$, and $A_{k_1,\dots,k_N}(\dott)$ all satisfy Hypothesis~\ref{h5.2} with $\Omega=\bbC$ and $\Omega_d=\{0\}$, one trivially computes 
\begin{align}
& A_1'(\zeta) A_1(\zeta)^{-1} = \diag\big(\zeta^{-1},0,0\big), \quad \zeta \in \bbC \backslash \{0\},  \\
&  A_2'(\zeta) A_2(\zeta)^{-1} = \diag \big(\zeta^{-1},2\, \zeta^{-1},0\big), \quad \zeta \in \bbC \backslash \{0\},  \\
&  A_k'(\zeta) A_k(\zeta)^{-1} = \diag \big(\zeta^{-1},k\, \zeta^{-1},0\big), \quad \zeta \in \bbC \backslash \{0\},  \\
& A_{k_1,\dots,k_N}'(\zeta) A_{k_1,\dots,k_N}(\zeta)^{-1}=\diag\big(k_1\,\zeta^{-1},\dots,k_N\,\zeta^{-1},0\big), \quad \zeta \in \bbC \backslash \{0\}.   
\end{align}
Hence
\begin{align}
& \tr_{\bbC^3} \big(A_1'(\zeta) A_1(\zeta)^{-1}\big) = \zeta^{-1}, \quad \zeta \in \bbC \backslash \{0\},  \\
& \tr_{\bbC^3} \big(A_2'(\zeta) A_2(\zeta)^{-1}\big) = 3 \, \zeta^{-1}, \quad \zeta \in \bbC \backslash \{0\},  \\
& \tr_{\bbC^3} \big(A_k'(\zeta) A_k(\zeta)^{-1}\big) = (k+1) \zeta^{-1}, \quad \zeta \in \bbC \backslash \{0\},  \\
& \tr_{\bbC^{N+1}} \big(A_{k_1,\dots,k_N}'(\zeta) A_{k_1,\dots,k_N}(\zeta)^{-1}\big) 
= \bigg(\sum_{j=1}^N k_j\bigg) \zeta^{-1}, \quad \zeta \in \bbC \backslash \{0\},  
\end{align}
which together with the elementary fact $\f{1}{2\pi i} 
\ointctrclockwise_{\partial D(0; \varepsilon)} d\zeta\, \zeta^{-1} =1$ instantly yields the algebraic multiplicities 
$m_a(0; A(\dott))$ recorded in \eqref{3.21}, \eqref{3.29}, \eqref{3.32}, and \eqref{3.39}. 

For completeness we mention that the matrix function $A_\infty(\dott)$ in Example~\ref{e3.5} does not satisfy Hypothesis~\ref{h5.2} since $A_\infty(\dott)$ is nowhere invertible on $\bbC$. 
\end{example}

In the remainder of this section we apply this circle of ideas to the pair of operators $(H,H_0)$ as discussed 
in Section~\ref{s4} and to the associated Birman--Schwinger operator $K(z) = - V (H_0 - z I_{\cH})^{-1}$, 
$z \in \rho(H_0)$. The following hypothesis is convenient and natural in the present context.

\begin{hypothesis} \lb{h5.6}
Suppose $H_0$ is a closed operator in $\cH$ with $\rho(H_0) \neq \emptyset$, and assume 
that $V$ is an operator in $\cH$ satisfying $\dom(V) \supseteq \dom(H_0)$. Introduce
\begin{equation}
H = H_0 + V, \quad \dom(H) = \dom(H_0),
\end{equation}
and suppose that $\rho(H)\not=\emptyset$. Furthermore, it is assumed that the Birman--Schwinger operator is bounded, that is, 
\begin{equation}\lb{kbounded}
K(z) = - V (H_0 - z I_{\cH})^{-1} \in \cB(\cH) \, \text{ for some $($and hence for all\,$)$ $z\in\rho(H_0)$.}
\end{equation}
\end{hypothesis} 

Assume Hypothesis~\ref{h5.6}. From 
$\rho(H)\not=\emptyset$ one concludes, in particular, that $H$ is a closed operator in $\cH$. 
We also note that assumption \eqref{kbounded} on the Birman--Schwinger operator is satisfied if $V$ is a closed operator in $\cH$, which is the typical situation in perturbation theory (see also \cite[Proposition~1]{GJ69} for 
related situations). In particular, in quantum mechanical 
applications to Schr\"odinger operators, $H_0$ is the self-adjoint realization of the Laplacian $- \Delta$ defined on $H^2(\bbR^n)$, $n \in \bbN$, and $V$ represents the maximally defined operator of multiplication with the function $V(\dott)$, which thus is closed in $L^2(\bbR^n; d^nx)$. Furthermore, if \eqref{kbounded} is satisfied for some $z\in\rho(H_0)$ then it is an direct consequence of the resolvent identity for $H_0$ that \eqref{kbounded} is 
satisfied for all $z\in\rho(H_0)$. One observes that in the notation of Section~\ref{s4} here $V=V_1$ and $V_2=V_2^*=I_\cH$, although we do not require 
here that $V$ is closed.

For $z\in\rho(H)\cap\rho(H_0)$ we have
the resolvent equation
\begin{equation}\lb{reso}
(H - zI_{\cH})^{-1} = (H_0 - zI_{\cH})^{-1} - (H - zI_{\cH})^{-1} V (H_0 - zI_{\cH})^{-1}.
\end{equation}
It follows from \eqref{kbounded} that 
\begin{equation}
I_{\cH} - K(z)=I_{\cH} + V (H_0 - z I_{\cH})^{-1},\quad  z\in\rho(H_0),
\end{equation}
and, as in the proof of Corollary~\ref{c4.8}, one verifies that for all $z\in\rho(H)\cap\rho(H_0)$ the operator $I_{\cH} - K(z)$
is boundedly invertible and one has
\begin{equation}\lb{4.29}
 (H_0 - z I_{\cH}) (H -z I_{\cH})^{-1}=I_\cH-V(H-zI_\cH)^{-1}=\big[I_{\cH} - K(z)\big]^{-1}\in \cB(\cH)
\end{equation}
and
\begin{equation}
(H -z I_{\cH})^{-1} = (H_0 - z I_{\cH})^{-1}\big[I_{\cH} - K(z)\big]^{-1}, \quad 
z \in \rho(H)\cap\rho(H_0).   \lb{4.29c}
\end{equation}
Inserting the latter expression for $(H -z I_{\cH})^{-1}$ in the right hand side of \eqref{reso} it follows that
\begin{align}
(H - z I_{\cH})^{-1} &= (H_0 - z I_{\cH})^{-1}   
- (H_0 - z I_{\cH})^{-1} \big[I_{\cH} - K(z)\big]^{-1}     
V (H_0 - z I_{\cH})^{-1},   \no \\
& \hspace*{6cm} z \in \rho(H)\cap\rho(H_0).    \lb{4.30}
\end{align} 

Theorem~\ref{t5.7} below provides a sufficient condition in terms of the Birman--Schwinger operator $K(\dott)$ 
for an isolated spectral point of $H$ to be a discrete eigenvalue. Moreover, the index formula \eqref{4.40} provides a convenient 
tool for computing algebraic multiplicities (cf.\ also the illustrations in the simple Example~\ref{e5.5}). 
In the special case discussed in 
Theorem~\ref{t5.9} later the algebraic multiplicity is also related to the algebraic 
multiplicity of the corresponding zero of the function $I_{\cH} - K(\dott)$. 
For our purposes it useful to recall from Lemma~\ref{l6.2} that the assumption $z_0\in\sigma_d(H_0)$ implies that the resolvent, 
\begin{equation}
\rho(H_0) \ni z\mapsto R_0(z) = (H_0-zI_\cH)^{-1} \in \cB(\cH), 
\end{equation} 
is finitely meromorphic on a disc $D(z_0; \varepsilon_0)$ for some $\varepsilon_0 > 0$ sufficiently small, and analytic on $D(z_0; \varepsilon_0)\backslash \{z_0\}$.
Using the Laurent expansion of the resolvent $R_0(\dott)$ it follows from the boundedness assumption \eqref{kbounded} in Hypothesis~\ref{h5.6}
that the function 
\begin{equation}
z\mapsto I_\cH-K(z)=I_\cH+V(H_0-zI_\cH)^{-1}
\end{equation}
is also finitely meromorphic on $D(z_0; \varepsilon_0)$; cf.\ Lemma~\ref{l6.2}~(ii). Hence the Birman--Schwinger operator $K(\dott)$ satisfies
\begin{equation}
K(z) = \sum_{k=-N_0}^{\infty} (z-z_0)^k K_k(z_0), \quad 
0 < |z - z_0| < \varepsilon_0,    \lb{4.32B}
\end{equation}
for some $N_0 = N_0(z_0) \in \bbN$  with  
\begin{equation}\lb{4.33B} 
K_{-k}(z_0) \in \cF(\cH), \; 1 \leq k \leq N_0(z_0), \quad 
K_k(z_0) \in \cB(\cH), \; k \in \bbN \cup \{0\}.
\end{equation} 
In the next theorem an additional assumption is imposed on the operator $K_0(z_0) \in \cB(\cH)$ in \eqref{4.32B}.

\begin{theorem} \lb{t5.7}
Assume Hypothesis~\ref{h5.6} and consider a point $z_0 \in \sigma_d(H_0) \cap \sigma(H)$ such that
$D(z_0; \varepsilon_0)\backslash \{z_0\}\subset \rho(H_0)\cap\rho(H)$ for some $\varepsilon_0 > 0$. In addition, 
suppose that 
\begin{equation}
[I_{\cH} - K_0(z_0)] \in \Phi(\cH).    \lb{4.33C} 
\end{equation} 
Then $z_0$ is a discrete eigenvalue of $H$,  
\begin{equation}
z_0 \in \sigma_d(H),    \lb{4.39} 
\end{equation}
and for $0 < \varepsilon<\varepsilon_0$, 
\begin{equation}
m_a(z_0; H) = m_a(z_0; H_0) + {\ind}_{\partial D(z_0; \varepsilon)}(I_{\cH} - K(\dott)).   \lb{4.40} 
\end{equation} 
\end{theorem}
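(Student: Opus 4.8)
The plan is to apply the logarithmic-residue machinery of Section~\ref{s5} to the $\cB(\cH)$-valued function $F(z) := I_{\cH} - K(z) = I_{\cH} + V(H_0 - z I_{\cH})^{-1}$. The first step is to verify that $F(\dott)$ satisfies Hypothesis~\ref{h5.2} on $\Omega = D(z_0; \varepsilon_0)$ with $\Omega_d = \{z_0\}$. Indeed, since $z_0 \in \sigma_d(H_0)$, Lemma~\ref{l6.2} together with \eqref{kbounded} shows that $K(\dott)$, and hence $F(\dott)$, is finitely meromorphic at $z_0$ with a Laurent expansion of the form \eqref{4.32B}, \eqref{4.33B}; its constant Laurent coefficient is $I_{\cH} - K_0(z_0) \in \Phi(\cH)$ by the assumption \eqref{4.33C}; and by \eqref{4.29} one has $F(z)^{-1} = (H_0 - z I_{\cH})(H - z I_{\cH})^{-1} \in \cB(\cH)$ for every $z \in \rho(H_0) \cap \rho(H) \supseteq D(z_0; \varepsilon_0) \backslash \{z_0\}$, so $F(\dott)$ is boundedly invertible (in particular Fredholm) on the punctured disc. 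The meromorphic Fredholm Theorem~\ref{tA.4} then applies and yields that $F(\dott)^{-1}$ is finitely meromorphic at $z_0$, so that $\ind_{\partial D(z_0; \varepsilon)}(I_{\cH} - K(\dott)) \in \bbZ$ is well defined for $0 < \varepsilon < \varepsilon_0$ by Definition~\ref{d5.3}.

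Next I would establish \eqref{4.39}. By hypothesis $z_0$ is an isolated point of $\sigma(H)$, so the Riesz projection $P(z_0; H)$ is defined. Using the resolvent identity $(H - z I_{\cH})^{-1} = (H_0 - z I_{\cH})^{-1} F(z)^{-1}$ from \eqref{4.29c} (equivalently \eqref{4.30}), $P(z_0; H) = \f{-1}{2\pi i}\ointctrclockwise_{\partial D(z_0; \varepsilon)} d\zeta\, (H_0 - \zeta I_{\cH})^{-1} F(\zeta)^{-1}$ is, up to the factor $-(2\pi i)^{-1}$, the contour integral of a product of two operator-valued functions that are finitely meromorphic at $z_0$ (the resolvent of $H_0$ by Lemma~\ref{l6.2}, and $F(\dott)^{-1}$ by Step~1), hence this product is itself finitely meromorphic at $z_0$ (cf.\ the discussion following Definition~\ref{d5.1}) and $P(z_0; H)$ is a finite-rank operator. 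Therefore $z_0 \in \sigma_d(H)$ by \eqref{2.7}, and $m_a(z_0; H) = {\tr}_{\cH}(P(z_0; H)) < \infty$ by \eqref{2.5}; the analogous identity $m_a(z_0; H_0) = {\tr}_{\cH}(P(z_0; H_0))$ holds because $z_0 \in \sigma_d(H_0)$.

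It then remains to identify $m_a(z_0; H) - m_a(z_0; H_0)$ with the index. Writing $R_0(z) = (H_0 - z I_{\cH})^{-1}$ and $K(z) = I_{\cH} - F(z)$, one has $(H - z I_{\cH})^{-1} - R_0(z) = R_0(z)(F(z)^{-1} - I_{\cH}) = R_0(z) F(z)^{-1} K(z)$, so that
\begin{equation}
m_a(z_0; H) - m_a(z_0; H_0) = \f{-1}{2\pi i}\, {\tr}_{\cH}\bigg(\ointctrclockwise_{\partial D(z_0; \varepsilon)} d\zeta\, R_0(\zeta) F(\zeta)^{-1} K(\zeta)\bigg).
\end{equation}
Since $R_0(\dott) F(\dott)^{-1}$ and $K(\dott)$ are finitely meromorphic at $z_0$, the cyclicity identity \eqref{wss} permits moving $K(\dott)$ to the front of the product; then the elementary resolvent computation $K(z) R_0(z) = K'(z) = -F'(z)$ (valid under \eqref{kbounded}, since $K'(z) = K(z)(H_0 - z I_{\cH})^{-1}$ and $F(z) = I_{\cH} - K(z)$) turns the integrand into $-F'(\zeta) F(\zeta)^{-1}$. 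Consequently the right-hand side equals ${\tr}_{\cH}\big(\f{1}{2\pi i}\ointctrclockwise_{\partial D(z_0; \varepsilon)} d\zeta\, F'(\zeta) F(\zeta)^{-1}\big) = \ind_{\partial D(z_0; \varepsilon)}(I_{\cH} - K(\dott))$ by Definition~\ref{d5.3}, which is precisely \eqref{4.40}.

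The main obstacle is not a deep difficulty but the careful verification that the Gohberg--Sigal tools apply in this partly unbounded setting: one must check that $F(\dott)$ meets every requirement of Hypothesis~\ref{h5.2} (especially that the constant Laurent coefficient inherits the Fredholm property from \eqref{4.33C} and that $F$ is genuinely boundedly invertible on the punctured disc), that all the operator products $R_0(\dott) F(\dott)^{-1}$, $R_0(\dott) F(\dott)^{-1} K(\dott)$, $K(\dott) R_0(\dott) F(\dott)^{-1}$, $F'(\dott) F(\dott)^{-1}$ are legitimately finitely meromorphic at $z_0$ so that \eqref{wss} may be invoked despite $V$ being unbounded, and that the operator identity $(H - z I_{\cH}) = F(z)(H_0 - z I_{\cH})$ on $\dom(H_0) = \dom(H)$ --- which underlies both $(H - z I_{\cH})^{-1} = R_0(z) F(z)^{-1}$ and $K(z) R_0(z) = -F'(z)$ --- is justified using only $V(H_0 - z I_{\cH})^{-1} \in \cB(\cH)$.
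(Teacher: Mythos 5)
Your proof is correct and follows essentially the same route as the paper: verify that $F(\dott) = I_{\cH}-K(\dott)$ satisfies the meromorphic Fredholm hypotheses on a punctured disc about $z_0$ so that $F(\dott)^{-1}$ is finitely meromorphic there, deduce from the second resolvent factorization $(H-zI_{\cH})^{-1}=R_0(z)F(z)^{-1}$ that the Riesz projection $P(z_0;H)$ is finite rank (hence $z_0\in\sigma_d(H)$), and then obtain the index identity by manipulating a trace of a contour integral, using the same three ingredients --- the factorization \eqref{4.29}, the relation $K'(z)=K(z)R_0(z)=-F'(z)$, and the Gohberg--Sigal cyclicity for contour integrals of products of finitely meromorphic functions (\eqref{wss}, \eqref{4.7}). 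The only cosmetic difference is the direction of the final computation: the paper starts from $F(\zeta)^{-1}(-K'(\zeta))$ and unwinds it into $R_0(\zeta)-(H_0-\zeta I_{\cH})(H-\zeta I_{\cH})^{-1}R_0(\zeta)$ before applying cyclicity, whereas you start from $(H-\zeta I_{\cH})^{-1}-R_0(\zeta)=R_0(\zeta)F(\zeta)^{-1}K(\zeta)$ and apply cyclicity to move $K(\zeta)$ to the front; these are dual arrangements of one and the same algebraic identity.
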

\begin{proof}
We start with the proof of \eqref{4.39}: The assumptions $z_0 \in \sigma_d (H_0)$ and $D(z_0; \varepsilon_0)\backslash \{z_0\}\subset \rho(H_0)$ yield that 
both $R_0 (\dott)$ and $I_{\cH} - K(\dott)$ are analytic in $D(z_0;\varepsilon_0)\backslash \{z_0\}$ and finitely meromorphic on 
$D(z_0;\varepsilon_0)$ (cf.\ the discussion preceding Theorem~\ref{t5.7}). The assumption 
$D(z_0; \varepsilon_0)\backslash \{z_0\}\subset \rho(H)$ 
and \eqref{4.29} imply that $[I_{\cH} - K(z)]^{-1} \in \cB(\cH)$ and, in particular, $[I_{\cH} - K(z)] \in \Phi(\cH)$ for all $z\in D(z_0;\varepsilon_0)\backslash \{z_0\}$.
Due to the assumption \eqref{4.33C} it follows that \eqref{A.8} and hence Hypothesis~\ref{hA.3} is satisfied by $[I_{\cH} - K(\dott)]$
with the choice $\Omega = D(z_0;\varepsilon_0)$ and $\Omega_d=\{z_0\}$. It is clear that
alternative $(ii)$ in Theorem~\ref{tA.4} applies to $[I_{\cH} - K(\dott)]$ and hence
the function $[I_{\cH} - K(\dott)]^{-1}$ is finitely meromorphic on $D(z_0;\varepsilon_0)$.
From \eqref{4.30} and the fact that $R_0(\dott)$ and $VR_0(\dott)$ are finitely meromorphic on $D(z_0; \varepsilon_0)$ it follows that the resolvent of $H$, 
\begin{equation}
\rho(H) \ni z\mapsto R(z)=(H-zI_\cH)^{-1} \in \cB(\cH) 
\end{equation} 
is also finitely meromorphic on $D(z_0;\varepsilon_0)$ and analytic on $D(z_0;\varepsilon_0)\backslash\{z_0\}$ 
(since $D(z_0;\varepsilon_0)\backslash\{z_0\}\subset\rho(H)$ by assumption); one 
notes that $R(\dott)$ has a singularity at $z_0$ by the assumption $z_0\in\sigma(H)$. 
An application of \cite[Sect.\ III.6.5]{Ka80} then yields that $z_0 \in \sigma_p(H)$, and employing once more the finitely meromorphic property of 
$R(\cdot)$ on $D(z_0;\varepsilon_0)$, one infers that the Riesz projection associated with $z_0$, 
\begin{equation}
P(z_0;H)=\f{-1}{2\pi i} \ointctrclockwise_{\partial D(z_0;\varepsilon)} 
d \zeta \, (H - \zeta I_{\cH})^{-1},   \quad 0 < \varepsilon < \varepsilon_0,   \lb{4.41}
\end{equation}
is finite-dimensional. This in turn is equivalent to the eigenvalue $z_0$ having finite algebraic multiplicity,  
implying \eqref{4.39}.

Next, employing \eqref{4.29}, one obtains 
\begin{align}
& [I_{\cH} - K(z)]^{-1} [- K'(z)] = [I_{\cH} - K(z)]^{-1} V (H_0 - z I_{\cH})^{-2}   \no \\
& \quad =  [I_{\cH} - K(z)]^{-1} (-K(z))(H_0 - z I_{\cH})^{-1}   \no \\
& \quad = \big\{I_{\cH} - [I_{\cH} - K(z)]^{-1}\big\} (H_0 - z I_{\cH})^{-1}   \no \\
& \quad =  (H_0 - zI_{\cH})^{-1} - (H_0 - z I_{\cH}) (H - z I_{\cH})^{-1}  (H_0 - zI_{\cH})^{-1},     \lb{4.42} \\
& \hspace*{6.42cm}  z \in D(z_0, \varepsilon_0)\backslash \{z_0\}.   \no 
\end{align} 
Thus, for $0 < \varepsilon<\varepsilon_0$, \eqref{4.42} implies
\begin{align}
&  {\ind}_{\partial D(z_0; \varepsilon)}(I_{\cH} - K(\dott)) = \f{1}{2 \pi i} {\tr}_{\cH}\bigg( 
\ointctrclockwise_{\partial D(z_0; \varepsilon)} d\zeta \, 
[I_{\cH} - K(\zeta)]^{-1}[- K'(\zeta)]\bigg)     \no \\
& \quad = \f{1}{2 \pi i} {\tr}_{\cH}\bigg( 
\ointctrclockwise_{\partial D(z_0; \varepsilon)} d\zeta \, \big[(H_0 - \zeta I_{\cH})^{-1}     \no \\
& \hspace*{3.9cm} 
- (H_0 - \zeta I_{\cH}) (H - \zeta I_{\cH})^{-1} (H_0 - \zeta I_{\cH})^{-1}\big]\bigg)  \no \\ 
& \quad = - m_a(z_0; H_0)   \no \\
& \qquad - \f{1}{2 \pi i} {\tr}_{\cH}\bigg( 
\ointctrclockwise_{\partial D(z_0; \varepsilon)} d\zeta \, 
\big[(H_0 - \zeta I_{\cH}) (H - \zeta I_{\cH})^{-1}\big] (H_0 - \zeta I_{\cH})^{-1}\bigg)     \no \\
& \quad = - m_a(z_0; H_0)   \no \\
& \qquad - \f{1}{2 \pi i} {\tr}_{\cH}\bigg( 
\ointctrclockwise_{\partial D(z_0; \varepsilon)} d\zeta \, 
{\rm pp}_{z_0} \big\{\big[(H_0 - \zeta I_{\cH}) (H - \zeta I_{\cH})^{-1}\big] (H_0 - \zeta I_{\cH})^{-1}\big\}\bigg)     \no \\
& \quad = - m_a(z_0; H_0)   \no \\ 
& \qquad - \f{1}{2 \pi i}  
\ointctrclockwise_{\partial D(z_0; \varepsilon)} d\zeta \, 
 {\tr}_{\cH}\Big({\rm pp}_{z_0} \big\{\big[(H_0 - \zeta I_{\cH}) (H - \zeta I_{\cH})^{-1}\big] (H_0 - \zeta I_{\cH})^{-1}\big\}\Big)  \no \\
 & \quad = - m_a(z_0; H_0)   \no \\  
 & \qquad - \f{1}{2 \pi i}  
\ointctrclockwise_{\partial D(z_0; \varepsilon)} d\zeta \, 
 {\tr}_{\cH}\Big({\rm pp}_{z_0} \big\{(H_0 - \zeta I_{\cH})^{-1}\big[(H_0 - \zeta I_{\cH}) (H - \zeta I_{\cH})^{-1}\big]\big\}\Big)  \no \\
& \quad = - m_a(z_0; H_0) - \f{1}{2 \pi i} {\tr}_{\cH}\bigg( 
\ointctrclockwise_{\partial D(z_0; \varepsilon)} d\zeta \, 
 {\rm pp}_{z_0} \big\{(H - \zeta I_{\cH})^{-1}\big\}\bigg)  \no \\
& \quad = - m_a(z_0; H_0) - \f{1}{2 \pi i} {\tr}_{\cH} \bigg(\ointctrclockwise_{\partial D(z_0; \varepsilon)} d\zeta \, 
(H - \zeta I_{\cH})^{-1}\bigg)  \no \\
& \quad = - m_a(z_0; H_0) + {\tr}_{\cH} (P(z_0;H))     \no \\ 
& \quad = m_a(z_0;H) - m_a(z_0; H_0).       \lb{4.43}
\end{align}
Here we used that $z_0 \in \sigma_d(H_0)$ to arrive 
at the 3rd equality sign in \eqref{4.43}, employed again \eqref{4.29}, the fact that 
\begin{equation}
z\mapsto (H_0 - z I_{\cH}) (H - z I_{\cH})^{-1}=\big[I_{\cH} - K(z)\big]^{-1}
\end{equation}
is finitely 
meromorphic in $D(z_0;\varepsilon_0)$, applied \eqref{4.7}, and finally used \eqref{4.41}. 
\end{proof}

\begin{remark} \lb{r5.8}
Condition \eqref{4.33C}, viz., $[I_{\cH} - K_0(z_0)] \in \Phi(\cH)$, might not be so easily verified in practice. 
However, invoking compactness of $K(\dott)$ in the form that 
\begin{equation}
K(z) \in \cB_{\infty}(\cH), \quad 0 < |z - z_0| < \varepsilon_0, 
\end{equation}
readily implies that $K_0(z_0) \in \cB_{\infty}(\cH)$ and hence $[I_{\cH} - K_0(z_0)] \in \Phi(\cH)$ with vanishing Fredholm index, $\ind(I_{\cH} - K_0(z_0)) = 0$. Indeed, by \eqref{4.32B}, 
\begin{equation}
K(z) = \sum_{k=-N_0}^{\infty} (z-z_0)^k K_k(z_0) \in \cB_{\infty}(\cH), \quad 0 < |z - z_0| < \varepsilon_0, 
\end{equation}
and hence also 
\begin{equation}
\sum_{k=0}^{\infty} (z-z_0)^k K_k(z_0) \in \cB_{\infty}(\cH), \quad 0 < |z - z_0| < \varepsilon_0,  \lb{4.44} 
\end{equation}
as $K_k(z_0) \in \cF(\cH)$ for $- N_0 \leq k \leq -1$. Taking the norm limit $z \to z_0$ in \eqref{4.44} results in $K_0(z_0) \in \cB_{\infty}(\cH)$. \hfill $\diamond$
\end{remark} 

We end this section with a variant of Theorem~\ref{t5.7}, where it is assumed that $z_0\in\rho(H_0)$. In this case $K(\dott)$ is analytic in $z_0$ and assumption \eqref{4.33C} simplifies: 

\begin{theorem} \lb{t5.9}
Assume Hypothesis~\ref{h5.6} and consider a point $z_0 \in \rho(H_0) \cap \sigma(H)$ such that
$D(z_0; \varepsilon_0)\backslash \{z_0\}\subset \rho(H_0)\cap\rho(H)$ for some $\varepsilon_0 > 0$. In addition, 
suppose that the Birman--Schwinger operator satisfies
\begin{equation}
[I_{\cH} - K(z_0)] \in \Phi(\cH).  \lb{4.32A} 
\end{equation} 
Then $z_0$ is a discrete eigenvalue of $H$,  
\begin{equation}
z_0 \in \sigma_d(H),    \lb{4.39A} 
\end{equation}
and for $0 < \varepsilon<\varepsilon_0$, 
\begin{equation}
m_a(z_0; H) = m_a(z_0;I_{\cH} - K(\dott)) = {\ind}_{\partial D(z_0; \varepsilon)}(I_{\cH} - K(\dott)).   \lb{4.40A} 
\end{equation} 
\end{theorem}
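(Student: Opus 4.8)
The plan is to obtain Theorem~\ref{t5.9} as the specialization of Theorem~\ref{t5.7} to the case $z_0\in\rho(H_0)$, combined with the logarithmic residue result Theorem~\ref{t5.4}.

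First I would record the simplifications forced by $z_0\in\rho(H_0)$. The resolvent $R_0(\dott)$ of $H_0$ is then analytic on a disc $D(z_0;\varepsilon_0)$, so the Birman--Schwinger operator $K(\dott)$ is analytic at $z_0$; hence in the Laurent expansion \eqref{4.32B} one has $N_0=0$ and $K_0(z_0)=K(z_0)$, and condition \eqref{4.33C} of Theorem~\ref{t5.7} reduces precisely to the present hypothesis \eqref{4.32A}. Moreover $z_0\in\rho(H_0)\subset\wti\rho(H_0)$ with $P(z_0;H_0)=0$, so $m_a(z_0;H_0)=\dim(\ran(P(z_0;H_0)))=0$ by \eqref{2.5}. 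Finally, $D(z_0;\varepsilon_0)\backslash\{z_0\}\subset\rho(H_0)\cap\rho(H)$ together with \eqref{4.29} shows that $[I_\cH-K(\dott)]$ is boundedly invertible, in particular Fredholm, on the punctured disc, so that $[I_\cH-K(\dott)]$ is analytic at $z_0$ and satisfies Hypothesis~\ref{h5.2} on $\Omega=D(z_0;\varepsilon_0)$ with $\Omega_d=\{z_0\}$.

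Next I would verify that the argument in the proof of Theorem~\ref{t5.7} carries over with ``$z_0\in\sigma_d(H_0)$'' replaced by ``$z_0\in\rho(H_0)$'': the role of $z_0\in\sigma_d(H_0)$ there is only to guarantee that $R_0(\dott)$ is finitely meromorphic near $z_0$ --- now automatic, since $R_0(\dott)$ is analytic there --- and to identify $m_a(z_0;H_0)$ with $\tr_\cH(P(z_0;H_0))=0$. As in that proof, $[I_\cH-K(\dott)]^{-1}$ is finitely meromorphic on $D(z_0;\varepsilon_0)$, hence so is the resolvent $R(\dott)$ of $H$ by \eqref{4.30}, so the Riesz projection $P(z_0;H)$ is finite rank and $z_0\in\sigma_d(H)$, which is \eqref{4.39A}. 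The contour/trace computation \eqref{4.42}--\eqref{4.43} then yields, for $0<\varepsilon<\varepsilon_0$,
\[
{\ind}_{\partial D(z_0;\varepsilon)}(I_\cH-K(\dott)) = m_a(z_0;H)-m_a(z_0;H_0) = m_a(z_0;H).
\]
Since $[I_\cH-K(\dott)]$ is analytic at $z_0$ and satisfies Hypothesis~\ref{h5.2}, the analytic part of Theorem~\ref{t5.4} (cf.\ \eqref{4.20}) gives $m_a(z_0;I_\cH-K(\dott))={\ind}_{\partial D(z_0;\varepsilon)}(I_\cH-K(\dott))\in\bbN_0$, and combining this with the previous display proves \eqref{4.40A}.

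I expect the main (and essentially only) nontrivial obstacle to be the bookkeeping in the third step: one must check that every ``finitely meromorphic'' assertion in the proof of Theorem~\ref{t5.7} degenerates cleanly to an ``analytic'' one when $z_0\in\rho(H_0)$, and that the manipulations \eqref{4.42}--\eqref{4.43}, which are organized around the principal part of $R_0(\dott)$ at $z_0$, remain valid once that principal part vanishes and the term $m_a(z_0;H_0)$ drops out. As a more self-contained alternative for the identity $m_a(z_0;H)=m_a(z_0;I_\cH-K(\dott))$ in \eqref{4.40A}, one may invoke Theorem~\ref{t4.5} and Corollary~\ref{c4.6}\,$(iii)$: the map $f\mapsto Vf$ matches Jordan chains of $H$ at $z_0$ with Jordan chains of $I_\cH-K(\dott)$ at $z_0$ of equal length and restricts to a bijection $\ker(H-z_0I_\cH)\to\ker(I_\cH-K(z_0))$, so the maximal chain lengths over corresponding bases coincide and summation yields the claim; along this route one must additionally argue --- using $z_0\in\sigma(H)$, Remark~\ref{r4.7}, and local constancy of the Fredholm index to see $\ker(I_\cH-K(z_0))\neq\{0\}$ --- that $z_0$ is a zero of finite-type of $I_\cH-K(\dott)$ (Definition~\ref{d3.9}), so that $m_a(z_0;I_\cH-K(\dott))<\infty$ by Theorem~\ref{t3.10} and hence \eqref{4.39A} holds.
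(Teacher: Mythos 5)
Your main argument is precisely the paper's (very terse) proof, simply written out in full: specialize the proof of Theorem~\ref{t5.7} to $z_0\in\rho(H_0)$, so that $K(\dott)$ is analytic at $z_0$ (hence the analytic Fredholm theorem, Theorem~\ref{tA.2}, replaces the meromorphic one, Theorem~\ref{tA.4}), and $m_a(z_0;H_0)=0$ drops out of the computation \eqref{4.42}--\eqref{4.43}, yielding $z_0\in\sigma_d(H)$ and ${\ind}_{\partial D(z_0;\varepsilon)}(I_{\cH}-K(\dott))=m_a(z_0;H)$; then the analytic case \eqref{4.20} of Theorem~\ref{t5.4} gives the remaining equality with $m_a(z_0;I_{\cH}-K(\dott))$. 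One caveat about the alternative route you sketch at the end: Theorem~\ref{t4.5} and Corollary~\ref{c4.6}\,$(iii)$ are proved under Hypothesis~\ref{h4.1}, which requires $V_1$ (here $V$) to be closed, while Hypothesis~\ref{h5.6} deliberately drops that requirement and assumes only $K(z)\in\cB(\cH)$ (the paper comments on exactly this right after Hypothesis~\ref{h5.6}); the Section~\ref{s4} arguments do carry over under the boundedness assumption alone, but you would need to say so explicitly rather than cite Theorem~\ref{t4.5} and Corollary~\ref{c4.6} as stated.
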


The proof of Theorem~\ref{t5.9} is almost the same as the proof of Theorem~\ref{t5.7};
instead of the meromorphic Fredholm theorem (Theorem~\ref{tA.4}) it now suffices to use the analytic Fredholm theorem (Theorem~\ref{tA.2}). 
Finally use Theorem~\ref{t5.4}.

\section{Essential Spectra and the Weinstein--Aronszajn Formula}\lb{s6}

In our final section we discuss various issues connected with essential spectra of closed operators.

Throughout this section we assume that $A$ is a closed operator in the separable, complex Hilbert space $\cK$. 
Recalling the definition of the discrete spectrum $\sigma_d(A)$ of $A$ in \eqref{2.7}, we now
introduce (in analogy to the self-adjoint case) the essential spectrum $\wti \sigma_{ess}(A)$ of $A$ as follows:

\begin{definition} \lb{d6.1}
Let $A$ be a closed operator in $\cK$. Then the {\it essential spectrum of $A$} is defined by
\begin{equation}
\wti \sigma_{ess}(A) = \sigma(A) \backslash \sigma_d(A)    \lb{6.3} 
\end{equation}
\end{definition}

One verifies that 
\begin{equation}
\sigma(A), \; \wti \sigma_{ess}(A) \, \text{ are closed subsets of $\bbC$,}    \lb{6.5} 
\end{equation}
and (cf.\ Definition~\ref{d5.2})
\begin{equation}
\wti \rho(A) = \bbC \backslash \wti \sigma_{ess}(A).    \lb{6.6} 
\end{equation}

Of course, $\wti \sigma_{ess}(A)$ coincides with the standard essential spectrum of $A$ if $A$ is self-adjoint in $\cK$. Since $A$ is not assumed to be self-adjoint, we emphasize that several inequivalent definitions of the essential spectrum are in use in the literature (cf., e.g., \cite[Sects.~11.2, 14.4]{Da07}, and especially, \cite[Ch.~9]{EE18} for a detailed discussion), 
however, in this paper we will only adhere to \eqref{6.3}, which corresponds to the definition employed in \cite[p.~106]{RS78}, and, as discussed in detail in \cite[p.~30--56]{BC19} (especially, Lemma~III.125 on p.~53) and in \cite[App.~B]{HL07}, our definition of $\wti \sigma_{ess}(A)$ in \eqref{6.3} is precisely $\sigma_{e5}(A)$ (also known as the Browder essential spectrum) in \cite[Sect.~1.4, Ch.~9]{EE18} (see, especially, pp.~460--461). 

Next, we turn to the invariance of the essential spectrum with respect to relatively compact perturbations. 
We start by recalling the following well-known facts. Suppose  that $A$ is closed in $\cK$, $\rho(A) \neq \emptyset$, and $B$ is an operator in $\cK$ with $\dom(B) \supseteq \dom(A)$. Then $B$ is {\it relatively bounded} (resp., {\it relatively compact}) with respect to $A$ if and only if $B(A - z_0 I_{\cK})^{-1} \in \cB(\cK)$ (resp., 
$B(A - z_0 I_{\cK})^{-1} \in \cB_{\infty}(\cK)$) for some (and hence for all) $z_0 \in \rho(A)$. In particular, if $B$ is relatively compact with respect to $A$ then it is infinitesimally bounded with respect to $A$ and consequently, the operator 
\begin{equation}
A+B, \, \text{ defined on } \, \dom(A+B) = \dom(A), \, \text{ is closed in $\cK$,} 
\end{equation} 
a fact that will be tacitly employed in the Theorem~\ref{t6.3}. (For details, see, e.g., \cite[Sects.~IV.1.1, IV.1.3]{Ka80}, 
\cite[Sect.~ 9.2]{We00}.) 
The following theorem on the extended resolvent set and the essential spectrum is in a certain sense folklore and is 
widely used by experts on non-self-adjoint operators. It can be seen as a variant of \cite[Lemma~I.5.2]{GK69} for unbounded closed operators
and can also be concluded from stability theorems for semi-Fredholm operators; cf. \cite[Sect.~IV.5]{Ka80} and Remark~\ref{r6.5}~(iii).

\begin{theorem} \lb{t6.3}
Assume that $A$ is closed in $\cK$, $\rho(A) \neq \emptyset$, and $B$ is an operator in $\cK$ with 
$\dom(B) \supseteq \dom(A)$ and 
\begin{equation} 
B(A - z I_{\cK})^{-1} \in \cB_{\infty}(\cK) \, \text{ for some $($and hence for all\,$)$ $z \in \rho(A)$.} 
\end{equation} 
$(i)$ Let $\Omega_0(A)$ be a connected component of $\wti \rho(A)$. If 
$\Omega_0(A) \cap \rho(A+B) \neq \emptyset$, then $\Omega_0(A)$ is also a connected component of 
$\wti \rho(A + B)$. In particular, if $A, B \in \cB(\cK)$, the unbounded connected component of $\wti \rho(A)$ and 
$\wti \rho(A + B)$ coincide. \\[1mm]
$(ii)$ Suppose that $\Omega_j(A) \cap \rho(A+B) \neq \emptyset$ for all connected components $\Omega_j(A)$, 
$j \in J$, of $\wti \rho(A)$ $($$J \subseteq \bbN$ an appropriate index set\,$)$. Then,
\begin{equation}
\wti \rho(A) \subseteq \wti \rho(A+B), \, \text{ equivalently, } \, \wti \sigma_{ess}(A+B) \subseteq \wti \sigma_{ess}(A).
\end{equation}
\end{theorem}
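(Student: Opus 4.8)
The plan is to reduce everything to a statement about the Fredholm property of $(A+B-zI_\cK)$ on the connected set $\widetilde\rho(A)$ and then invoke the analytic Fredholm theorem together with the characterization of $\widetilde\sigma_{ess}$ via the Browder essential spectrum. First I would observe that for $z\in\widetilde\rho(A)$ the resolvent $(A-zI_\cK)^{-1}$ exists in the finitely meromorphic sense (Lemma~\ref{l6.2}), and that for such $z$ one can write
\begin{equation}
A+B-zI_\cK = \big[I_\cK + B(A-zI_\cK)^{-1}\big](A-zI_\cK),
\end{equation}
whenever $z\in\rho(A)$; at a point $z_0\in\sigma_d(A)$ one works instead on the punctured disc and uses the Laurent structure \eqref{6.7}. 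The key point is that $B(A-zI_\cK)^{-1}\in\cB_\infty(\cK)$ for $z\in\rho(A)$ and, by the finite-rank nature of the principal part in \eqref{6.7} together with $B(A-\lambda_0 I_\cK)^{-1}\in\cB_\infty(\cK)$, the operator-valued function $z\mapsto B(A-zI_\cK)^{-1}$ is finitely meromorphic on each connected component $\Omega_0(A)$ of $\widetilde\rho(A)$ (apply Lemma~\ref{l6.2}\,$(ii)$ with $S=B(A-\lambda_0 I_\cK)^{-1}$, $T$ a suitable resolvent factor). Hence $z\mapsto I_\cK + B(A-zI_\cK)^{-1}$ is a $\cB(\cH)$-valued finitely meromorphic function on $\Omega_0(A)$ all of whose values (where analytic) are Fredholm of index zero, since they are compact perturbations of $I_\cK$; so Hypothesis~\ref{h5.2} is satisfied with $\Omega=\Omega_0(A)$ and $\Omega_d = \sigma_d(A)\cap\Omega_0(A)$.

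Next I would apply the meromorphic Fredholm theorem: on the connected set $\Omega_0(A)$ the function $I_\cK + B(A-zI_\cK)^{-1}$ is either nowhere boundedly invertible, or boundedly invertible off a discrete subset $\Omega_d'\subset\Omega_0(A)$ with finitely meromorphic inverse. Under the hypothesis of part $(i)$ there exists $z_1\in\Omega_0(A)\cap\rho(A+B)$; at such $z_1$ the operator $I_\cK + B(A-z_1I_\cK)^{-1}$ is boundedly invertible (this is exactly the Birman--Schwinger-type factorization, cf.\ the computations around \eqref{4.29}, now with $V_1 = B$, $V_2 = I_\cK$), so the first alternative is excluded and we are in the second case. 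Consequently, for every $z\in\Omega_0(A)\setminus(\sigma_d(A)\cup\Omega_d')$ one has $(A+B-zI_\cK)$ invertible, i.e.\ $z\in\rho(A+B)$; and at the exceptional points $z_0\in\Omega_d'\cup(\sigma_d(A)\cap\Omega_0(A))$ the resolvent $(A+B-zI_\cK)^{-1}$ is finitely meromorphic near $z_0$ by \eqref{4.30} together with the finitely meromorphic property of $R_0(\dott)$ and of $[I_\cK+B R_0(\dott)]^{-1}$, exactly as in the proof of Theorem~\ref{t5.7}. The finitely meromorphic structure of the resolvent at $z_0$, via \cite[Sect.~III.6.5]{Ka80}, forces the Riesz projection $P(z_0;A+B)$ to be finite rank, whence $z_0\in\rho(A+B)\cup\sigma_d(A+B)$. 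Thus $\Omega_0(A)\subseteq\widetilde\rho(A+B)$; since $\Omega_0(A)$ is open, connected, and, being a component of $\bbC\setminus\widetilde\sigma_{ess}(A)$, is also a maximal connected open subset avoiding $\widetilde\sigma_{ess}(A+B)$ once we know it avoids it, it is a connected component of $\widetilde\rho(A+B)$. The statement for $A,B\in\cB(\cK)$ is the special case in which $\Omega_0(A)$ is the unbounded component (which automatically meets $\rho(A+B)$ for $|z|$ large). This proves $(i)$.

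Part $(ii)$ is then immediate: if $\Omega_j(A)\cap\rho(A+B)\neq\emptyset$ for every connected component $\Omega_j(A)$ of $\widetilde\rho(A)$, then by $(i)$ each $\Omega_j(A)$ is contained in $\widetilde\rho(A+B)$, hence $\widetilde\rho(A)=\bigcup_j\Omega_j(A)\subseteq\widetilde\rho(A+B)$; taking complements and using \eqref{6.6} gives $\widetilde\sigma_{ess}(A+B)\subseteq\widetilde\sigma_{ess}(A)$. The main obstacle I anticipate is the bookkeeping at the points $z_0\in\sigma_d(A)$ that lie inside $\Omega_0(A)$: there the factorization $A+B-zI_\cK=[I_\cK+BR_0(z)]\,(A-zI_\cK)$ only holds on the punctured disc, $R_0(\dott)$ itself is singular at $z_0$, and one must argue—mimicking the chain of trace identities in the proof of Theorem~\ref{t5.7} and using \eqref{4.30}—that the resolvent $(A+B-zI_\cK)^{-1}$ nonetheless stays finitely meromorphic there. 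The rest is a routine combination of the meromorphic Fredholm theorem, Lemma~\ref{l6.2}, and the definitions \eqref{5.7}, \eqref{6.6}.
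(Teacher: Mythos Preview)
Your approach via the meromorphic Fredholm theorem is a legitimate alternative for establishing the forward inclusion $\Omega_0(A)\subseteq\wti\rho(A+B)$, and your handling of the points in $\sigma_d(A)\cap\Omega_0(A)$ (mirroring the proof of Theorem~\ref{t5.7}) is sound. Part~$(ii)$ is also fine, since the conclusion $\wti\rho(A)\subseteq\wti\rho(A+B)$ only requires the inclusion $\Omega_j(A)\subseteq\wti\rho(A+B)$ for each component, which you do prove.

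However, there is a genuine gap in part~$(i)$. You establish $\Omega_0(A)\subseteq\wti\rho(A+B)$, but the statement asserts that $\Omega_0(A)$ is a \emph{connected component} of $\wti\rho(A+B)$, i.e., a maximal connected open subset. Your justification---``being a component of $\bbC\setminus\wti\sigma_{ess}(A)$, is also a maximal connected open subset avoiding $\wti\sigma_{ess}(A+B)$ once we know it avoids it''---is a non sequitur: maximality within $\wti\rho(A)$ in no way implies maximality within the (possibly larger) set $\wti\rho(A+B)$. A priori the component $\Omega_0(A+B)$ of $\wti\rho(A+B)$ containing $\Omega_0(A)$ could be strictly larger.

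The paper closes this gap by running the argument in reverse. After obtaining $\Omega_0(A)\subseteq\Omega_0(A+B)$, it picks an arbitrary $\hatt z\in\Omega_0(A+B)$, chooses a small circle $\partial D(\hatt z;\varepsilon)\subset\rho(A)\cap\rho(A+B)$, and computes
\[
P(\hatt z;A)=P(\hatt z;A+B)+\f{1}{2\pi i}\ointctrclockwise_{\partial D(\hatt z;\varepsilon)} d\zeta\,(A+B-\zeta I_{\cK})^{-1}K(\zeta),
\]
using the resolvent identity written ``the other way''. Since $P(\hatt z;A+B)$ has finite rank and $K(\zeta)\in\cB_\infty(\cK)$, one concludes $P(\hatt z;A)\in\cB_\infty(\cK)$, hence finite rank, so $\hatt z\in\wti\rho(A)$. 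This yields $\Omega_0(A+B)\subseteq\wti\rho(A)$, and by connectedness $\Omega_0(A+B)\subseteq\Omega_0(A)$, whence equality. Your proposal omits this reverse step entirely; without it, only the inclusion (enough for $(ii)$, not for $(i)$) is proved.
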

\begin{proof}
$(i)$ One can follow the proof of \cite[Lemma~I.5.2]{GK69}, where the special case of bounded operators is treated. More precisely, 
one first notes that
$K(z) = - B(A - z I_{\cK})^{-1}$ is analytic in $\Omega_0(A) \cap \rho(A)$ and  
$K(z) \in \cB_{\infty}(\cK)$ for all $z \in \Omega_0(A) \cap \rho(A)$. By hypothesis, $(A+B-z_0 I_{\cK})^{-1} \in \cB(\cK)$ for some $z_0 \in \Omega_0(A)$. If $z_0 \in \rho(A)$, then 
\begin{equation} \label{e6.7}
(A+B-z_0 I_{\cK}) = [I_{\cK} - K(z_0)] (A-z_0 I_{\cK})
\end{equation}
implies bounded invertibility of $I_{\cK} - K(z_0)$, that is, 
\begin{equation}
 [I_{\cK} - K(z_0)]^{-1} \in \cB(\cK).
\end{equation}
If $z_0 \in \sigma_d(A)$ one chooses a $z_1 \neq z_0$, $z_1 \in \rho(A) \cap \Omega_0(A)$ in a sufficiently small neighborhood of $z_0$ and then obtains
\begin{equation}
 [I_{\cK} - K(z_1)]^{-1} \in \cB(\cK).
\end{equation}
This choice is possible since $\rho(A), \Omega_0(A)$ are both open subsets of $\bbC$ and $z_0 \in \sigma_d(A)$ is isolated in $\sigma(A)$ by definition. By Theorem~\ref{tA.2}, 
there is a discrete set $\cD_0 \subset \rho(A) \cap \Omega_0(A)$ such that
$[I_{\cK} - K(z)]^{-1} \in \cB(\cK)$ for all
$z \in \rho(A) \cap \Omega_0(A)$ with $z \not\in \cD_0$. Thus, since $\sigma(A) \cap \Omega_0(A)$ consists of isolated points only, 
\begin{equation}
(A+B-z I_{\cK}) = [I_{\cK} - K(z)] (A-z I_{\cK})
\end{equation}
is boundedly invertible for $z \in \Omega_0(A) \backslash \cD_1$, where
$\cD_1 \subset \Omega_0(A)$ is a (possibly empty) discrete set. In particular, $\sigma(A+B) \cap \Omega_0(A)$ consists of isolated points only. 
Next, pick $\wti z \in \Omega_0(A)$ and $0 < \varepsilon$ sufficiently small such that 
$\ol{D(\wti z;\varepsilon)} \backslash \{\wti z\} \subset \rho(A) \cap \rho(A+B)$.
Use \eqref{e6.7} and consider the Riesz projection
\begin{align}
& P(\wti z; A+B) = \f{-1}{2 \pi i} \ointctrclockwise_{\partial D(\wti z;\varepsilon)} d \zeta \, (A+B - \zeta I_{\cK})^{-1} 
\no \\
& \quad = \f{-1}{2 \pi i} \ointctrclockwise_{\partial D(\wti z;\varepsilon)} d \zeta \, 
\Big((A - \zeta I_{\cK})^{-1} - (A+B - \zeta I_{\cK})^{-1} \big[B(A - \zeta I_{\cK})^{-1}\big]\Big)   \no \\
& \quad = P(\wti z; A) - \f{1}{2 \pi i} \ointctrclockwise_{\partial D(\wti z;\varepsilon)} d \zeta \, 
(A+B - \zeta I_{\cK})^{-1} K(\zeta). 
\end{align}
Since $\dim(\ran(P(\wti z; A))) < \infty$ and $K(\zeta) \in \cB_{\infty}(\cK)$ for all $\zeta\in\partial D(\wti z;\varepsilon)$, also 
$P(\wti z; A+B) \in \cB_{\infty}(\cK)$. The latter fact is equivalent to $\dim(\ran(P(\wti z; A+B))) < \infty$ implying 
$\wti z \in \wti \rho(A+B)$ and hence 
\begin{equation}
\Omega_0(A) \subseteq \wti \rho(A+B).
\end{equation} 
In particular,
\begin{equation}
\Omega_0(A) \subseteq \Omega_0 (A+B),     \lb{6.13a} 
\end{equation} 
where $\Omega_0 (A+B)$ is the connected component of $\wti \rho(A+B)$ that contains $\Omega_0(A)$. To prove the reverse inclusion we now interchange the role of $A$ and $A+B$ as follows: Pick $\hatt z \in \wti \rho(A+B)$ 
and $0 < \varepsilon$ sufficiently small such that 
$\ol{D(\hatt z;\varepsilon)} \backslash \{\hatt z\} \subset \rho(A) \cap \rho(A+B)$ and consider the Riesz projection
\begin{align}
& P(\hatt z; A) = \f{-1}{2 \pi i} \ointctrclockwise_{\partial D(\hatt z;\varepsilon)} d \zeta \, (A - \zeta I_{\cK})^{-1} 
\no \\
& \quad = \f{-1}{2 \pi i} \ointctrclockwise_{\partial D(\hatt z;\varepsilon)} d \zeta \, 
\Big((A + B - \zeta I_{\cK})^{-1} + (A+B - \zeta I_{\cK})^{-1} \big[B(A - \zeta I_{\cK})^{-1}\big]\Big)   \no \\
& \quad = P(\hatt z; A+B) + \f{1}{2 \pi i} \ointctrclockwise_{\partial D(\hatt z;\varepsilon)} d \zeta \, 
(A+B - \zeta I_{\cK})^{-1} K(\zeta). 
\end{align}
Invoking compactness of $K(\dott)$ once more one then concludes $\hatt z \in \wti \rho(A)$ and hence 
\begin{equation}
\Omega_0(A + B) \subseteq \wti \rho(A).
\end{equation} 
Since $\Omega_0(A + B) \cap \Omega_0(A) \neq \emptyset$, this implies 
\begin{equation}
\Omega_0(A + B) \subseteq \Omega_0 (A),      \lb{6.16a}
\end{equation} 
and hence by \eqref{6.13a},  
\begin{equation}
\Omega_0 (A) = \Omega_0 (A+B).
\end{equation}
The fact that for bounded operators
\begin{equation}
\rho(A) \cap \rho(A+B) \supset \{z \in \bbC \, | \, |z| > \max(\|A\|, \|A+B\|)\},
\end{equation}
proves that the unbounded connected component of $\rho(A)$ and $\rho(A+B)$ coincide. \\[1mm]
$(ii)$ Since $\bigcup_{j \in J} \Omega_j(A) = \wti \rho(A)$, part $(ii)$ follows from part $(i)$. 
\end{proof}

Thus, a sufficient condition for the invariance of the essential spectrum of $A$ under the perturbation $B$ 
arises as follows:

\begin{corollary} \lb{c6.4}
Assume that $A$ is closed in $\cK$, $\rho(A) \neq \emptyset$, and $B$ is a linear operator in $\cK$ with 
$\dom(B) \supseteq \dom(A)$ and 
\begin{equation}\label{usethis234}
B(A - z I_{\cK})^{-1} \in \cB_{\infty}(\cK) \, \text{ for some $($and hence for all\,$)$ $z \in \rho(A)$.}
\end{equation} 
Suppose each connected component of $\wti \rho(A)$ contains a point of $\rho(A+B)$, and each connected component of $\wti \rho(A+B)$ contains a point of $\rho(A)$. Then,
\begin{equation}
\wti \rho(A+B) = \wti \rho(A), \, \text{ equivalently, } \, \wti \sigma_{ess}(A+B) = \wti \sigma_{ess}(A).
\end{equation}
\end{corollary}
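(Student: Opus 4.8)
The plan is to deduce the corollary from Theorem~\ref{t6.3}\,$(ii)$ applied twice: once to the pair $(A,B)$ and once to the ``reversed'' pair $(A+B,-B)$, exploiting the obvious symmetry $A = (A+B) + (-B)$. First I would apply Theorem~\ref{t6.3}\,$(ii)$ directly: the hypothesis that every connected component of $\wti\rho(A)$ contains a point of $\rho(A+B)$ is precisely what that theorem requires, so it yields $\wti\rho(A) \subseteq \wti\rho(A+B)$, equivalently $\wti\sigma_{ess}(A+B) \subseteq \wti\sigma_{ess}(A)$. Since $\rho(A) \neq \emptyset$, the set $\wti\rho(A)$ has at least one connected component, so in particular $\rho(A+B) \neq \emptyset$; moreover $A+B$ is closed on $\dom(A+B)=\dom(A)$ because $B$, being relatively compact with respect to $A$, is infinitesimally $A$-bounded.

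The one point that requires a small argument is that $-B$ is relatively compact with respect to $A+B$, i.e.\ the symmetry of the relative compactness relation. Fix $z\in\rho(A)\cap\rho(A+B)$ (nonempty by the above). Since $B$ is relatively bounded with respect to $A$, the operator $B(A+B-zI_\cK)^{-1}$ is everywhere defined and closed, hence bounded by the closed graph theorem; the second resolvent identity then gives
\[
 B(A+B-zI_\cK)^{-1} = B(A-zI_\cK)^{-1}\big[I_\cK - B(A+B-zI_\cK)^{-1}\big],
\]
which exhibits $B(A+B-zI_\cK)^{-1}$ as the product of the compact operator $B(A-zI_\cK)^{-1}$ with a bounded operator, so $(-B)(A+B-zI_\cK)^{-1}\in\cB_\infty(\cK)$. (Alternatively one may simply invoke the standard fact from \cite[Sect.~IV.1.3]{Ka80} that $A$-compactness of $B$ implies $(A+B)$-compactness once $\rho(A+B)\neq\emptyset$.)

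With this in hand, the second hypothesis of the corollary --- that every connected component of $\wti\rho(A+B)$ contains a point of $\rho(A) = \rho\big((A+B)+(-B)\big)$ --- is exactly the assumption Theorem~\ref{t6.3}\,$(ii)$ needs for the pair $(A+B,-B)$, and it therefore yields $\wti\rho(A+B) \subseteq \wti\rho\big((A+B)+(-B)\big) = \wti\rho(A)$. Combining the two inclusions gives $\wti\rho(A) = \wti\rho(A+B)$, and by \eqref{6.6} this is equivalent to $\wti\sigma_{ess}(A) = \wti\sigma_{ess}(A+B)$. I expect no serious obstacle: the only mildly delicate step is the symmetry of relative compactness dealt with above, and everything else is a bookkeeping application of Theorem~\ref{t6.3}.
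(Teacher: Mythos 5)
Your proposal is correct and takes essentially the same route as the paper: apply Theorem~\ref{t6.3}\,$(ii)$ once to $(A,B)$, then verify that $-B$ is $(A+B)$-compact via the second resolvent identity and apply Theorem~\ref{t6.3}\,$(ii)$ with roles interchanged. The only cosmetic difference is that the paper establishes boundedness of $B(A+B-zI_\cK)^{-1}$ from the identity $B(A+B-zI_\cK)^{-1}=I_\cK-(A-zI_\cK)(A+B-zI_\cK)^{-1}$ rather than via the closed graph theorem, but this is the same idea.
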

\begin{proof} 
From Theorem~\ref{t6.3}\,$(ii)$ we obtain the inclusion
\begin{equation}
\wti \rho(A) \subseteq \wti \rho(A+B), \, \text{ equivalently, } \, \wti \sigma_{ess}(A+B) \subseteq \wti \sigma_{ess}(A).
\end{equation}
To show the other inclusion we note that $A+B$ is closed in $\cK$, $\rho(A+B)\neq \emptyset$, and $-B$ is a linear operator in $\cK$ with $\dom(-B) \supseteq \dom(A+B)$. Moreover, for $z\in\rho(A)\cap\rho(A+B)$ one verifies  
\begin{equation}
B(A+B - z I_{\cK})^{-1}=I_\cK-(A-zI_{\cK}) (A+B - z I_{\cK})^{-1}\in\cB(\cK)
\end{equation} 
and from
\begin{equation}
- B (A+B - z I_{\cK})^{-1} = - B(A - z I_{\cK})^{-1} + B(A - z I_{\cK})^{-1} \big[B(A+B - z I_{\cK})^{-1}\big]
\end{equation}
and assumption \eqref{usethis234} one concludes $- B (A+B - z I_{\cK})^{-1} \in \cB_{\infty}(\cK)$ for some 
$($and hence for all\,$)$ $z \in \rho(A)\cap\rho(A+B)$. Therefore, Theorem~\ref{t6.3}\,$(ii)$ applies with the roles of $A$ and $A+B$ interchanged to obtain the remaining 
inclusion
\begin{equation}
\wti \rho(A+B) \subseteq \wti \rho(A), \, \text{ equivalently, } \, \wti \sigma_{ess}(A) \subseteq \wti \sigma_{ess}(A+B).
\end{equation}
\end{proof}

\begin{remark} \lb{r6.5}
$(i)$ The example of the unitary left-shift operator $A_0$ in $\ell^2(\bbZ)$, given by 
\begin{equation}
(A_0 f)_n = f_{n+1}, \quad f = \{f_n\}_{n \in \bbZ} \in \ell^2(\bbZ), 
\end{equation}
with 
\begin{equation}
\sigma(A_0) = \sigma_{ess}(A_0) = \{z \in \bbC \, | \, |z| =1\} = \partial D(0; 1),
\end{equation}
perturbed by the rank-one perturbation
\begin{equation}
(B_0 f)_n = - \delta_{n,0} f_1, \quad f = \{f_n\}_{n \in \bbZ} \in \ell^2(\bbZ), 
\end{equation}
yields for $A_0 + B_0$,
\begin{equation}
\sigma(A_0 + B_0) = \sigma_{ess}(A_0 + B_0) = \ol{D(0;1)},
\end{equation}
see, for instance, \cite[Example~1, p.~110]{RS78}. Thus, invariance of the essential spectrum already fails 
spectacularly even in the presence of a rank-one perturbation $B_0$, without some some additional 
condition on the connected components of $\wti \rho(A_0)$, respectively, $\wti \rho(A_0 + B_0)$. Indeed, 
\begin{align}
& \rho(A_0) = \wti \rho(A_0) = D(0;1) \cup (\bbC \backslash \ol{D(0;1)}) = \bbC \backslash \partial D(0;1)  
\end{align} 
consists of two connected components, while 
\begin{align}
& \rho(A_0 + B_0) = \wti \rho(A_0+ B_0) = \bbC \backslash \ol{D(0;1)} 
\end{align} 
has precisely one connected component, consistent with 
\begin{equation} 
\ol{D(0;1)} = \sigma_{ess}(A_0 + B_0) \supsetneqq \sigma_{ess}(A_0) = \partial D(0; 1).
\end{equation} 
$(ii)$ For additional criteria implying invariance of the essential spectrum we refer to \cite[p.~111--117]{RS78}. 
\\[1mm]
$(iii)$ Alternatively, one can prove Theorem~\ref{t6.3} invoking Fredholm theoretic notions (such as, nullity, 
deficiency, and the Fredholm index). For more details we refer to \cite[Sect.~IV.5]{Ka80}.  \\[1mm] 
$(iv)$ The hypotheses on the connected components of $\widetilde \rho(A)$ and $\widetilde \rho(A+B)$ in 
Corollary~\ref{c6.4} correct and complete the ones made in \cite[Sects.~4, 5]{GLMZ05}.
\hfill $\diamond$ 
\end{remark}

Next, we need one more piece of notation: Let
$\Omega\subseteq\bbC$ be open and connected, and let
$f\colon\Omega\to\bbC\cup\{\infty\}$ be meromorphic and not
identically vanishing on $\Omega$. The {\it multiplicity function}
$m(z;f)$, $z\in\Omega$, is then defined by
\begin{align}
m(z;f)&=\begin{cases} k, & \text{if $z$ is a zero of $f$ of order
$k$,} \\
-k, & \text{if $z$ is a pole of $f$ of order $k$,} \\
0, & \text{otherwise.} \end{cases} \lb{6.34} \\
&= \f{1}{2\pi i} \ointctrclockwise_{\partial D(z; \varepsilon) } d\zeta \,
\f{f'(\zeta)}{f(\zeta)}, \quad z\in\Omega, \lb{6.35}
\end{align}
for $0 < \varepsilon$ sufficiently small. Here the counterclockwise oriented circle $\partial D(z; \varepsilon) $ 
is chosen sufficiently small such that $\partial D(z; \varepsilon) $ contains no other singularities or zeros of $f$ except, possibly, $z$.

As discussed in Howland \cite{Ho70}, there is an additional problem with 
meromorphic (even finitely meromorphic) $\cB_p(\cH)$-valued
functions in connection with modified Fredholm determinants we need to address. 
Indeed, suppose $F$ is meromorphic in $\Omega$ and $F(z)$, $z \in \Omega$, is of finite rank. Then
of course $\det_\cH(I_{\cH}-F(\cdot))$ is meromorphic in $\Omega$ (here $\det_\cH(\dott)$ represents the standard 
{\it Fredholm determinant}). However, the formula (see, e.g., \cite[p.~75]{Si05}, \cite[p.~44]{Ya92}), 
\begin{equation}
{\det}_{\cH,p} (I_{\cH}-F(z))=  {\det}_{\cH}(I_{\cH}-F(z))
\exp\bigg[\tr_\cH\bigg(-\sum_{j=1}^{p-1}j^{-1} F(z)^j\bigg)\bigg],
\quad z\in\Omega,  \lb{6.37}
\end{equation}
where ${\det}_{\cH,p} (\dott)$, $p \in \bbN$, represents the $p$th ({\it modified}) {\it Fredholm determinant} 
(cf., \cite[Ch.~IX]{GGK00}, \cite[Ch.~IV]{GK69}, \cite{Si77}, \cite[Chs.~3, 9]{Si05}, \cite[Sect.~1.7]{Ya92}) shows that ${\det}_{\cH,p} (I_{\cH}-F(\dott))$, for $p > 1$, in general, will exhibit essential singularities at poles 
of $F$. To sidestep this difficulty, Howland extends the definition of $m(\cdot\,;f)$ in \eqref{6.34} to functions $f$ with isolated essential singularities by focusing exclusively on \eqref{6.35}: Suppose $f$ is meromorphic in
$\Omega$ except at isolated essential singularities. Then as in \eqref{6.35}, we use the definition 
\begin{equation}
m(z;f)=\f{1}{2\pi i} \ointctrclockwise_{\partial D(z; \varepsilon) } d\zeta \,
\f{f'(\zeta)}{f(\zeta)}, \quad z\in\Omega, \lb{6.38}
\end{equation}
where $\varepsilon>0$ is again chosen sufficiently small to exclude all singularities and zeros of $f$ except, 
possibly, $z$.

Given the extension of $m(\dott\,;f)$ to meromorphic functions with isolated essential singularities, the 
generalization of Howland's global Weinstein--Aronszajn formula \cite{Ho70} to the case where 
$H_0, H$ are non-self-adjoint, is obtained in the next theorem. Here a slightly stronger assumption than in Theorem~\ref{t5.7} or Remark~\ref{r5.8} is imposed,
namely, it is assumed that the Birman--Schwinger operator satisfies $K(z)=-V(H_0 - z I_{\cH})^{-1}\in \cB_p(\cH)$ for some $p \in \bbN$
and for some (and hence for all) $z \in \rho(H_0)$.
In this situation it will be shown in the proof of Theorem~\ref{t6.6} that the identity
\begin{equation}\label{puthere}
 m(z_0; {\det}_{\cH,p} (I_{\cH} - K(\dott)))={\ind}_{\partial D(z_0; \varepsilon)}(I_{\cH} - K(\dott)) 
\end{equation}
holds for all $z_0 \in \wti \rho (H_0)$ and $0 < \varepsilon$ sufficiently small.

\begin{theorem} \lb{t6.6}
In addition to Hypothesis~\ref{h5.6} let $p \in \bbN$, and assume that 
\begin{equation} 
V(H_0 - z I_{\cH})^{-1} \in \cB_p(\cH) \, \text{ for some $($and hence for all\,$)$ $z \in \rho(H_0)$.}
\end{equation} 
Suppose each connected component of $\wti \rho(H_0)$ contains a point of $\rho(H)$, and each connected component of $\wti \rho(H)$ contains a point of $\rho(H_0)$. Then,
\begin{equation}
\wti \rho(H) = \wti \rho(H_0), \, \text{ equivalently, } \, \wti \sigma_{ess}(H) = \wti \sigma_{ess}(H_0),  \lb{6.39} 
\end{equation}
and the global Weinstein--Aronszajn formula
\begin{equation}
m_a(z;H)=m_a(z;H_0)+m(z;{\det}_{\cH,p}(I_{\cK}-K(\dott))), \quad z \in \wti \rho (H_0),      \lb{6.40}
\end{equation}
holds. 
\end{theorem}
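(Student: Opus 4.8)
The plan is to establish the two assertions separately: first the spectral-invariance statement \eqref{6.39}, and then the multiplicity formula \eqref{6.40}, the latter by combining the index identities of Section~\ref{s5} with the new equality \eqref{puthere}. Since $\cB_p(\cH) \subseteq \cB_\infty(\cH)$, the hypothesis on $V(H_0 - z I_\cH)^{-1}$ makes $V$ relatively compact with respect to $H_0$, so \eqref{6.39} is immediate from Corollary~\ref{c6.4} applied to the pair $(H_0, V)$; the assumptions on the connected components of $\wti\rho(H_0)$ and $\wti\rho(H)$ are exactly those required there.

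For \eqref{6.40}, fix $z_0 \in \wti\rho(H_0) = \wti\rho(H)$. If $z_0$ lies in $\sigma(H_0)$ it in fact lies in $\sigma_d(H_0)$ and hence is isolated in $\sigma(H_0)$, and similarly for $\sigma(H)$, so one can choose $\varepsilon_0 > 0$ with $D(z_0; \varepsilon_0) \backslash \{z_0\} \subseteq \rho(H_0) \cap \rho(H)$. As recalled before Theorem~\ref{t5.7} (and trivially when $z_0 \in \rho(H_0)$), \eqref{kbounded} together with $z_0 \in \wti\rho(H_0)$ implies that $K(\dott)$ is finitely meromorphic on $D(z_0; \varepsilon_0)$ with finite-rank principal part at $z_0$ and $K(\zeta) \in \cB_p(\cH)$ for $\zeta \in D(z_0; \varepsilon_0) \backslash \{z_0\}$, while $K_0(z_0) \in \cB_p(\cH) \subseteq \cB_\infty(\cH)$ by the argument in Remark~\ref{r5.8}, so $[I_\cH - K_0(z_0)] \in \Phi(\cH)$. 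One may therefore invoke Theorem~\ref{t5.9} (when $z_0 \in \rho(H_0) \cap \sigma(H)$), Theorem~\ref{t5.7} (when $z_0 \in \sigma_d(H_0) \cap \sigma(H)$), or the index computation \eqref{4.43} from the proof of Theorem~\ref{t5.7}, now with $P(z_0; H) = 0$ (when $z_0 \in \rho(H)$), to obtain in every case, with the convention that $m_a(z_0; \cdot) = 0$ at a point of the relevant resolvent set,
\[
{\ind}_{\partial D(z_0; \varepsilon)}(I_\cH - K(\dott)) = m_a(z_0; H) - m_a(z_0; H_0), \quad 0 < \varepsilon < \varepsilon_0 .
\]
It thus remains to prove \eqref{puthere}: that this index also equals $m\big(z_0; {\det}_{\cH,p}(I_\cH - K(\dott))\big)$.

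To prove \eqref{puthere}, set $g(\zeta) = {\det}_{\cH,p}(I_\cH - K(\zeta))$, which is holomorphic and nonvanishing on $D(z_0; \varepsilon_0) \backslash \{z_0\}$ because $I_\cH - K(\zeta)$ is boundedly invertible there; by \eqref{6.38}, $m(z_0; g)$ equals $(2\pi i)^{-1}$ times the contour integral of $g'/g$ over $\partial D(z_0; \varepsilon)$. The standard formula for the logarithmic derivative of the $p$th modified Fredholm determinant of a $\cB_p(\cH)$-valued holomorphic function (see, e.g., \cite[Ch.~IX]{GGK00}, \cite[Ch.~9]{Si05}) gives
\[
\frac{g'(\zeta)}{g(\zeta)} = - {\tr}_\cH\big([I_\cH - K(\zeta)]^{-1} K(\zeta)^{p-1} K'(\zeta)\big), \quad \zeta \in D(z_0; \varepsilon_0) \backslash \{z_0\},
\]
the operator inside the trace lying in $\cB_1(\cH)$, so the trace passes through the contour integral. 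On the other hand, by Definition~\ref{d5.3} and \eqref{wss}, the index equals ${\tr}_\cH$ of the contour integral of $[I_\cH - K(\zeta)]^{-1}(- K'(\zeta))$ over $\partial D(z_0; \varepsilon)$, that integral being of finite rank. Since $[I_\cH - K(\zeta)]^{-1}\big(I_\cH - K(\zeta)^{p-1}\big) = \sum_{j=0}^{p-2} K(\zeta)^j$, the difference of the two traces is $- \sum_{j=0}^{p-2} {\tr}_\cH$ of the contour integral of $K(\zeta)^j K'(\zeta)$; and each such term vanishes, since $K(\dott)$ and $K'(\dott)$ are finitely meromorphic at $z_0$ — so, by the discussion around \eqref{ws}, all the contour integrals of $K^i K' K^{j-i}$ are of finite rank — and the contour integral of $(d/d\zeta)\big(K(\zeta)^{j+1}\big)$ vanishes, which by \eqref{wss} forces $(j+1)$ times the trace of the contour integral of $K^j K'$ to equal the sum over $i$ of the traces of the contour integrals of $K^i K' K^{j-i}$, hence $0$ (for $j = 0$ one only needs that the contour integral of $K'$ vanishes). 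Therefore $m(z_0; g) = {\ind}_{\partial D(z_0; \varepsilon)}(I_\cH - K(\dott))$, and together with the displayed index identity this gives \eqref{6.40}.

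The main obstacle is exactly \eqref{puthere}: for $p > 1$ the modified determinant ${\det}_{\cH,p}(I_\cH - K(\dott))$ can acquire an essential singularity at a pole of $K(\dott)$ (Howland's observation, cf.\ \eqref{6.37}), so one cannot route the argument through the ordinary Fredholm determinant, and the correction operators $K(\zeta)^j K'(\zeta)$ with $j < p - 1$ fail to be trace class pointwise. The device above — passing to contour integrals, which finite-meromorphy renders of finite rank, and using cyclicity of the trace under the contour integral to annihilate every correction term — circumvents this; the only delicate bookkeeping is deciding when the trace may be moved through the contour integral, which is legitimate for the $\cB_1(\cH)$-valued integrand $[I_\cH - K(\zeta)]^{-1} K(\zeta)^{p-1} K'(\zeta)$ and, for the remaining integrands, because these become finite rank after integration.
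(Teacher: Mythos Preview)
Your proof is correct and follows essentially the same route as the paper's: you derive \eqref{6.39} from Corollary~\ref{c6.4}, and for \eqref{6.40} you combine the index identity $\ind_{\partial D(z_0;\varepsilon)}(I_\cH-K(\dott))=m_a(z_0;H)-m_a(z_0;H_0)$ from Section~\ref{s5} with the equality \eqref{puthere}, the latter proved via the logarithmic-derivative formula for ${\det}_{\cH,p}$, the algebraic identity $[I_\cH-K]^{-1}K^{p-1}=[I_\cH-K]^{-1}-\sum_{j=0}^{p-2}K^j$, and the vanishing of $\tr_\cH\big(\oint K^jK'\big)$ by cyclicity under the contour integral. The paper runs the same computation as a single chain of equalities \eqref{6.41} and cites \eqref{4.40} at the end; your explicit case split according to whether $z_0$ lies in $\rho(H_0)$, $\sigma_d(H_0)$, $\rho(H)$, or $\sigma_d(H)$ is a slightly more careful bookkeeping of the same argument.
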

\begin{proof}
Since \eqref{6.39} has been established in Corollary~\ref{c6.4}, it suffices to focus on \eqref{6.40}. 
In the computation \eqref{6.41} below we shall make use of the elementary identity 
\begin{equation}  
[I_{\cK} - L]^{-1}L^{p-1}=[I_{\cK} - L]^{-1} -  \sum_{j=0}^{p-2} L^j 
\end{equation} 
for $L \in \cB(\cK)$ such that $[I_{\cK} - L]^{-1} \in \cB(\cK)$.
It follows from \eqref{6.38}, \cite[eq.~(18) on p.~44]{Ya92}
for $z_0 \in \wti \rho (H_0)$, and $0 < \varepsilon$ sufficiently small, that 
\begin{align}
& m(z_0; {\det}_{\cH,p} (I_{\cH} - K(\dott))) = \f{1}{2 \pi i}  \ointctrclockwise_{\partial D(z_0; \varepsilon)} d \zeta \, 
\f{d}{d \zeta} \ln({\det}_{\cH,p}(I_{\cH} - K(\zeta)))    \no \\
& \quad = \f{1}{2 \pi i}  \ointctrclockwise_{\partial D(z_0; \varepsilon)} d \zeta \, 
\tr_{\cH} \big([I_{\cH} - K(\zeta)]^{-1} K(\zeta)^{p-1} [-K'(\zeta)]\big)   \no \\
& \quad = \f{1}{2 \pi i} \tr_{\cH} \bigg( \ointctrclockwise_{\partial D(z_0; \varepsilon)} d \zeta \, 
[I_{\cH} - K(\zeta)]^{-1} K(\zeta)^{p-1} [- K'(\zeta)]\bigg)   \no \\ 
& \quad = \f{1}{2 \pi i} \tr_{\cH} \bigg( \ointctrclockwise_{\partial D(z_0; \varepsilon)} d \zeta \,  
\bigg\{\sum_{j=0}^{p-2} K(\zeta)^j K'(\zeta) + [I_{\cH} - K(\zeta)]^{-1} [- K'(\zeta)] \bigg\}\bigg)   \no \\
& \quad = \f{1}{2 \pi i} \tr_{\cH} \bigg( \ointctrclockwise_{\partial D(z_0; \varepsilon)} d \zeta \,  
[I_{\cH} - K(\zeta)]^{-1} [- K'(\zeta)] \bigg)   \no \\
& \quad = {\ind}_{\partial D(z_0; \varepsilon)}(I_{\cH} - K(\dott))    \no \\
& \quad = m_a(z_0; H) - m_a(z_0; H_0),      \lb{6.41} 
\end{align}
which also implies \eqref{puthere}.
Here we used \eqref{4.8} and \eqref{4.40} in the final steps, and the fact that because of analyticity of 
$K(\dott)^j K'(\dott)$ in a sufficiently small punctured neighborhood of $z_0$,  
\begin{equation}
\ointctrclockwise_{\partial D(z_0; \varepsilon)} d \zeta \,  
K(\zeta)^j K'(\zeta) = 0, \quad j \in \bbN_0.   \lb{6.42}
\end{equation} 
To prove \eqref{6.42} one invokes \eqref{ws}--\eqref{4.7} repeatedly to conclude that
\begin{align}
& \tr_{\cH} \bigg( \ointctrclockwise_{\partial D(z_0; \varepsilon)} d \zeta \,  
K(\zeta)^j K'(\zeta)\bigg) = \f{1}{j+1} \tr_{\cH} \bigg( \ointctrclockwise_{\partial D(z_0; \varepsilon)} d \zeta \,  
\f{d}{d \zeta} K(\zeta)^{j+1}\bigg) = 0,      \no \\
& \hspace*{9.5cm} j \in \bbN_0,  
\end{align}
since $K(\dott)^{j+1}$, $j \in \bbN_0$, is analytic in a sufficiently small neighborhood of 
$\partial D(z_0; \varepsilon)$. The result \eqref{6.41} extends to $z_0$ in each connected component 
of $\wti \rho (H_0)$ and hence to all of $\wti \rho (H_0)$. 
\end{proof}

\begin{remark} \lb{r6.7}
In the special case $p=1$ Theorem~\ref{t6.6} 
was originally obtained by Kuroda \cite{Ku61}. Howland \cite{Ho70} developed the theory in great detail for 
very general perturbations (patterned after Kato \cite{Ka66}) in the case where $H_0$ and $H$ are self-adjoint. 
A very different proof of \eqref{6.40}, closely following the arguments in Howland \cite{Ho70}, was discussed in \cite{GLMZ05}. Frank \cite{Fr18} also considered the case where $H_0$ 
was self-adjoint and bounded from below. \hfill $\diamond$
\end{remark}

We conclude with the following variant of Theorem~\ref{t5.9}:  

\begin{theorem} \lb{t6.8}
Assume Hypothesis~\ref{h5.6} and consider a point $z_0 \in \rho(H_0) \cap \sigma(H)$ such that
$D(z_0; \varepsilon_0)\backslash \{z_0\}\subset \rho(H_0)\cap\rho(H)$ for some $\varepsilon_0 > 0$. Let $p \in \bbN$. In addition, 
suppose that 
\begin{equation} 
V(H_0 - z I_{\cH})^{-1} \in \cB_p(\cH) \, \text{ for some $($and hence for all\,$)$ $z \in \rho(H_0)$.}   \lb{6.43} 
\end{equation} 
Then $z_0$ is a discrete eigenvalue of $H$,  
\begin{equation}
z_0 \in \sigma_d(H),    \lb{6.44} 
\end{equation}
and for $0 < \varepsilon<\varepsilon_0$, 
\begin{align}
\begin{split} 
m_a(z_0;H) &= {\ind}_{\partial D(z_0; \varepsilon)}(I_{\cH} - K(\dott))    \\     
&= m_a(z_0; I_{\cH} - K(\dott)) = m(z_0;{\det}_{\cH,p}(I_{\cK}-K(\dott))).    \lb{6.45}
\end{split} 
\end{align}
\end{theorem}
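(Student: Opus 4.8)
The plan is to derive Theorem~\ref{t6.8} as a local refinement of Theorem~\ref{t5.9}, supplemented by the modified-Fredholm-determinant computation that already appears inside the proof of Theorem~\ref{t6.6}; the one point to watch is that, unlike in Theorem~\ref{t6.6}, no global assumption on the connected components of $\wti\rho(H_0)$ is available here, so everything must be argued in a fixed punctured disc around $z_0$.

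First I would note that the Schatten hypothesis \eqref{6.43} yields $K(z) = - V(H_0 - z I_{\cH})^{-1} \in \cB_p(\cH) \subseteq \cB_{\infty}(\cH)$ for every $z \in \rho(H_0)$; since $z_0 \in \rho(H_0)$, the operator $K(z_0)$ is compact, hence $[I_{\cH} - K(z_0)] \in \Phi(\cH)$ with vanishing Fredholm index. In particular the additional requirement \eqref{4.32A} of Theorem~\ref{t5.9} holds automatically, and the remaining hypotheses of Theorem~\ref{t5.9} are exactly the ones assumed here. Applying Theorem~\ref{t5.9} therefore gives the discreteness assertion \eqref{6.44}, i.e.\ $z_0 \in \sigma_d(H)$, together with
\[
m_a(z_0;H) = m_a(z_0; I_{\cH} - K(\dott)) = {\ind}_{\partial D(z_0; \varepsilon)}(I_{\cH} - K(\dott)), \quad 0 < \varepsilon < \varepsilon_0,
\]
which covers the first three quantities in \eqref{6.45}.

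It then remains to show ${\ind}_{\partial D(z_0; \varepsilon)}(I_{\cH} - K(\dott)) = m(z_0;{\det}_{\cH,p}(I_{\cK}-K(\dott)))$, and for this I would simply reproduce the first five equalities of the computation \eqref{6.41}. Because $z_0 \in \rho(H_0)$, the function $K(\dott)$ is analytic at $z_0$ and $\cB_p(\cH)$-valued in a neighbourhood of $z_0$, so ${\det}_{\cH,p}(I_{\cK}-K(\dott))$ is analytic and not identically zero near $z_0$; starting from definition \eqref{6.38} of the multiplicity function, one inserts the logarithmic-derivative formula \cite[eq.~(18) on p.~44]{Ya92} for the $p$-modified Fredholm determinant, pulls the trace through the contour integral, applies the elementary identity $[I_{\cK}-L]^{-1}L^{p-1} = [I_{\cK}-L]^{-1} - \sum_{j=0}^{p-2} L^j$ with $L = K(\zeta)$, and discards the terms $\ointctrclockwise_{\partial D(z_0;\varepsilon)} d\zeta\, K(\zeta)^j K'(\zeta) = 0$ via \eqref{6.42} (each $K(\dott)^{j+1}$ being analytic near $\partial D(z_0;\varepsilon)$). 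What remains is $\tr_{\cH}\big((2\pi i)^{-1}\ointctrclockwise_{\partial D(z_0;\varepsilon)} d\zeta\, [I_{\cH}-K(\zeta)]^{-1}[-K'(\zeta)]\big)$, which equals ${\ind}_{\partial D(z_0;\varepsilon)}(I_{\cH} - K(\dott))$ by Definition~\ref{d5.3}. Combining this with the previous display establishes \eqref{6.45}.

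As for the main obstacle: there is essentially none, since the statement is a short bookkeeping consequence of results already at hand. The only subtlety worth flagging is the observation that the part of \eqref{6.41} producing $m(z_0;{\det}_{\cH,p}) = {\ind}_{\partial D(z_0;\varepsilon)}$ is genuinely local — it uses neither the global connectedness hypotheses of Theorem~\ref{t6.6} nor the case $z_0 \in \sigma_d(H_0)$; here $z_0 \in \rho(H_0)$ makes $K(\dott)$ analytic at $z_0$, so the principal-part manipulations in \eqref{6.41} are in fact not needed — while the identification of the index with $m_a(z_0;H) - m_a(z_0;H_0)$ (with $m_a(z_0;H_0) = 0$ since $z_0 \in \rho(H_0)$) is delivered directly by Theorem~\ref{t5.9}.
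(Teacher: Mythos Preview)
Your proposal is correct and follows essentially the same route as the paper's proof: invoke Theorem~\ref{t5.9} (noting, as in Remark~\ref{r5.8}, that \eqref{6.43} forces $K(z_0)\in\cB_\infty(\cH)$ and hence \eqref{4.32A}) to obtain \eqref{6.44} and the first three equalities in \eqref{6.45}, and then recycle the local part of the computation \eqref{6.41} to identify the index with $m(z_0;{\det}_{\cH,p}(I_{\cK}-K(\dott)))$, using $m_a(z_0;H_0)=0$ since $z_0\in\rho(H_0)$. Your observation that only the local, analytic portion of \eqref{6.41} is needed here is exactly the point.
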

\begin{proof}
By formula \eqref{4.40A} in Theorem~\ref{t5.9} (which applies in the current situation since assumption 
\eqref{6.43} is stronger than \eqref{4.32A}; cf. Remark~\ref{r5.8}), it remains to prove the final equality in \eqref{6.45}. For this 
purpose one now follows the derivation of \eqref{6.41} (see also \eqref{puthere}) line by line to arrive at 
\begin{equation}
m(z_0; {\det}_{\cH,p} (I_{\cH} - K(\dott))) = \cdots\cdots  
= {\ind}_{\partial D(z_0; \varepsilon)}(I_{\cH} - K(\dott)) = m_a(z_0; H),  
\end{equation}
since $m_a(z_0; H_0) = 0$ as $z_0 \in \rho(H_0)$. 
\end{proof}

We remark that
Lemma~3.2 in Frank \cite{Fr18} corresponds to the second line of \eqref{6.45} (see also \cite{GK17}). 

\appendix

\section{The Analytic and Meromorphic Fredholm Theorems} \lb{sA} 
In this short appendix we recall the analytic and meromorphic Fredholm theorems for operator-valued functions, see, e.g., 
\cite[Sect.~4.1]{GL09}, \cite{GS71}, \cite{Ho70}, \cite[Theorem\ VI.14]{RS80}, \cite[Theorem~XIII.13]{RS78}, \cite{RV69}, \cite{St68}.

\begin{hypothesis} \lb{hA.1} 
Let $\Omega \subseteq \bbC$ be open and connected, and suppose that 
$A:\Omega \to \cB(\cH)$ is analytic and that 
\begin{equation}
A(z) \in \Phi(\cH) \, \text{ for all } \, z\in\Omega.    \lb{A.1} 
\end{equation}  
\end{hypothesis}

Then the analytic Fredholm theorem reads as follows:

\begin{theorem} \lb{tA.2} 
Assume that $A:\Omega \to \cB(\cH)$ satisfies Hypothesis~\ref{hA.1}. Then either \\
$(i)$ $A(z)$ is not boundedly invertible for any $z \in \Omega$, \\[1mm]
or else, \\[1mm]
$(ii)$ $A(\dott)^{-1}$ is finitely meromorphic on $\Omega$. More precisely, there exists a discrete 
subset $\Omega_d \subset \Omega$ $($i.e., a set without limit points in $\Omega$;  
possibly, $\Omega_d = \emptyset$$)$ such that 
$A(z)^{-1} \in \cB(\cH)$ for all $z \in \Omega\backslash \Omega_d$, $A(\dott)^{-1}$ is analytic on 
$\Omega\backslash \Omega_d$, and meromorphic on $\Omega$. In addition, 
\begin{equation}
A(z)^{-1} \in \Phi(\cH) \, \text{ for all } \, z \in \Omega\backslash \Omega_d, 
\end{equation}
and if $z_1 \in \Omega_d$ then 
\begin{equation}
A(z)^{-1} = \sum_{k= - N_0(z_1)}^{\infty} (z - z_1)^{k} C_k(z_1), \quad 
0 < |z - z_1| < \varepsilon_0(z_1),    \lb{A.3}
\end{equation}
with  
\begin{align} \lb{A.4} 
\begin{split} 
& C_{-k}(z_1) \in \cF(\cH), \; 1 \leq k \leq N_0(z_1),  \quad  C_0(z_1) \in \Phi(\cH), \\
& C_k(z_1) \in \cB(\cH), \; k \in \bbN. 
\end{split} 
\end{align}
In addition, if $[I_{\cH} - A(z)] \in \cB_{\infty}(\cH)$ for all $z \in \Omega$, then 
\begin{equation}
\big[I_{\cH} - A(z)^{-1}\big] \in \cB_{\infty}(\cH), \; z \in \Omega\backslash \Omega_d, 
\quad [I_{\cH} - C_0(z_1)] \in \cB_{\infty}(\cH), \; z_1 \in \Omega_d.  
\end{equation} 
\end{theorem}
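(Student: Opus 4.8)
The plan is to reduce, around each point of $\Omega$, the question of bounded invertibility of $A(\dott)$ to a finite-dimensional scalar problem by means of a Schur complement, and then to globalise using the connectedness of $\Omega$; the finite-rank structure of the principal parts in \eqref{A.3}, the property $C_0(z_1)\in\Phi(\cH)$ in \eqref{A.4}, and the compactness statements will all be extracted from the resulting local normal form and from comparison of Laurent coefficients. First I would fix $z_0\in\Omega$ and use $A(z_0)\in\Phi(\cH)$: let $P$ be the orthogonal projection onto $\ker(A(z_0))$, of dimension $n$, and $Q$ the orthogonal projection onto $(\ran(A(z_0)))^\perp=\ker(A(z_0)^*)$, of dimension $m$, so that $n-m=\ind(A(z_0))$ and $(I_\cH-Q)\cH=\ran(A(z_0))$ (the range being closed). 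Writing $A(z)$ as a $2\times2$ block operator with respect to $\cH=(I_\cH-P)\cH\oplus P\cH$ on the domain side and $\cH=(I_\cH-Q)\cH\oplus Q\cH$ on the range side, one reads off from $A(z_0)P=0$ and $QA(z_0)=0$ that $A(z_0)$ has block form $\diag\big(A(z_0)|_{(I_\cH-P)\cH},0\big)$ with $A(z_0)|_{(I_\cH-P)\cH}$ a bounded bijection of $(I_\cH-P)\cH$ onto $(I_\cH-Q)\cH$; hence, by the open mapping theorem and a Neumann series, the $(1,1)$-block $A_{11}(z)$ is boundedly invertible with analytic inverse for $z$ in a sufficiently small disc $D(z_0;\varepsilon_1)$.

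On $D(z_0;\varepsilon_1)$ the operator $A(z)$ is boundedly invertible if and only if the Schur complement
\[
S(z)=A_{22}(z)-A_{21}(z)A_{11}(z)^{-1}A_{12}(z)
\]
is invertible as an operator from the $n$-dimensional space $P\cH$ to the $m$-dimensional space $Q\cH$, and in that case $A(z)^{-1}$ is given by the standard $2\times2$ block-inversion formula in terms of the analytic functions $A_{11}(z)^{-1}$, $A_{12}(z)$, $A_{21}(z)$ and of $S(z)^{-1}$. Now the dichotomy appears locally: if $n\ne m$ then $S(z)$, hence $A(z)$, is boundedly invertible for no $z\in D(z_0;\varepsilon_1)$; if $n=m$, identify $S(\dott)$ with an $n\times n$ matrix-valued analytic function and consider the scalar analytic function $\det(S(\dott))$, which is either identically zero near $z_0$ — so again $A(z)$ is nowhere invertible there — or has only isolated zeros, off which $S(z)^{-1}=(\det S(z))^{-1}\operatorname{adj}(S(z))$ is meromorphic. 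Since each occurrence of $S(z)^{-1}$ in the block-inversion formula factors through the finite-dimensional spaces $P\cH$ or $Q\cH$, all its Laurent coefficients are of finite rank, so $A(\dott)^{-1}$ is finitely meromorphic on $D(z_0;\varepsilon_1)$. To globalise, call $z_0\in\Omega$ \emph{singular} if $A(\dott)$ is boundedly invertible on no punctured neighbourhood of $z_0$; by the above, both the set of singular points and its complement are open, so connectedness of $\Omega$ forces either that every point of $\Omega$ is singular — this is alternative $(i)$ — or that none is. In the latter case put $\Omega_d=\{z\in\Omega:A(z)\text{ not boundedly invertible}\}$; a limit point of $\Omega_d$ in $\Omega$ would be singular, so $\Omega_d$ is discrete, $A(\dott)^{-1}$ is analytic (hence $\Phi(\cH)$-valued) on $\Omega\backslash\Omega_d$ and finitely meromorphic at each $z_1\in\Omega_d$, which is \eqref{A.3}. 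Moreover, writing $A(z)=\sum_{j\ge0}(z-z_1)^jA_j$ with $A_0=A(z_1)\in\Phi(\cH)$ and comparing the constant Laurent coefficients in $A(z)A(z)^{-1}=I_\cH=A(z)^{-1}A(z)$, and using that $C_{-1}(z_1),\dots,C_{-N_0(z_1)}(z_1)\in\cF(\cH)$, one gets $A_0C_0(z_1),\,C_0(z_1)A_0\in I_\cH+\cF(\cH)$, whence a short Fredholm argument (both kernel and cokernel of $C_0(z_1)$ are then finite-dimensional and its range is closed) yields $C_0(z_1)\in\Phi(\cH)$, completing \eqref{A.4}.

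For the final (compactness) assertion I would write $A(z)^{-1}-I_\cH=A(z)^{-1}(I_\cH-A(z))$: for $z\in\Omega\backslash\Omega_d$ this is a bounded operator composed with a compact one, so $I_\cH-A(z)^{-1}\in\cB_\infty(\cH)$. At $z_1\in\Omega_d$, since $z\mapsto I_\cH-A(z)$ is $\cB_\infty(\cH)$-valued and $\cB_\infty(\cH)$ is closed in $\cB(\cH)$, all its Taylor coefficients $B_j$ lie in $\cB_\infty(\cH)$; multiplying $A(z)^{-1}=\sum_{k\ge-N_0(z_1)}(z-z_1)^kC_k(z_1)$ by $I_\cH-A(z)=\sum_{j\ge0}(z-z_1)^jB_j$, the constant term equals the finite sum $\sum_{j=0}^{N_0(z_1)}C_{-j}(z_1)B_j\in\cB_\infty(\cH)$, which is also $C_0(z_1)-I_\cH$; hence $I_\cH-C_0(z_1)\in\cB_\infty(\cH)$.

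The step I expect to be the real work is the local Schur-complement reduction: once the invertibility of $A(\dott)$ has been transferred to the finite-dimensional function $S(\dott)$, the classical scalar determinant settles the dichotomy and the meromorphic structure, and everything else is bookkeeping — tracking the finite-rank factorisations through $P\cH$ and $Q\cH$ in the block-inversion formula, and reading off $C_0(z_1)\in\Phi(\cH)$ and the two compactness statements from the Laurent-coefficient comparisons.
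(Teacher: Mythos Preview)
The paper does not supply its own proof of Theorem~\ref{tA.2}; it merely records the analytic Fredholm theorem in the appendix with references to \cite[Sect.~4.1]{GL09}, \cite{GS71}, \cite{Ho70}, \cite[Theorem~VI.14]{RS80}, \cite[Theorem~XIII.13]{RS78}, \cite{RV69}, \cite{St68}, so there is nothing to compare against directly.

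Your sketch is correct and is essentially the classical argument one finds in the Gohberg school references (e.g., \cite[Sect.~4.1]{GL09}, \cite{GS71}): local reduction via a Schur complement to a finite-dimensional analytic problem, the scalar dichotomy for $\det S(\dott)$, and then a connectedness argument. The finite-rank structure of the principal part follows exactly as you say from the block-inversion formula, since every singular contribution factors through $P\cH$ or $Q\cH$. Your derivation of $C_0(z_1)\in\Phi(\cH)$ from $A_0C_0(z_1),\,C_0(z_1)A_0\in I_\cH+\cF(\cH)$ is fine; if you want to make the ``short Fredholm argument'' explicit, take a regularizer $B_0$ of $A_0$ (so $B_0A_0-I_\cH,\,A_0B_0-I_\cH\in\cF(\cH)$), multiply $A_0C_0(z_1)=I_\cH+F_1$ on the left by $B_0$ to get $C_0(z_1)-B_0\in\cF(\cH)$, and conclude since $B_0\in\Phi(\cH)$ and Fredholmness is stable under finite-rank perturbations. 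The compactness statements are handled correctly; note only that the compactness of the Taylor coefficients $B_j$ of $I_\cH-A(\dott)$ follows from Cauchy's integral formula together with the closedness of $\cB_\infty(\cH)$ in $\cB(\cH)$, which you use implicitly.
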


Next, we briefly turn to the meromorphic Fredholm theorem.

\begin{hypothesis} \lb{hA.3} 
Let $\Omega \subseteq \bbC$ be open and connected, and $\Omega_d \subset \Omega$ a discrete set 
$($i.e., a set without limit points in $\Omega$$)$. Suppose that $A:\Omega \backslash \Omega_d \to \cB(\cH)$ 
is analytic and that $A(\dott)$ is finitely meromorphic on $\Omega$. In addition, suppose that  
\begin{equation}
A(z) \in \Phi(\cH) \, \text{ for all } \, z\in\Omega \backslash \Omega_d,    \lb{A.6} 
\end{equation}  
and for all $z_0 \in \Omega_d$ there is a norm convergent Laurent expansion
around $z_0$ of the form
\begin{equation}
A(z) = \sum_{k=-N_0}^{\infty} (z-z_0)^k A_k(z_0), \quad 
0 < |z - z_0| < \varepsilon_0,    \lb{A.7}
\end{equation}
for some $N_0 = N_0(z_0) \in \bbN$ and some $0 < \varepsilon_0 = \varepsilon_0(z_0)$ 
sufficiently small, with  
\begin{align}\lb{A.8} 
\begin{split} 
& A_{-k}(z_0) \in \cF(\cH), \; 1 \leq k \leq N_0(z_0),  \quad  A_0(z_0) \in \Phi(\cH), \\
& A_k(z_0) \in \cB(\cH), \; k \in \bbN. 
\end{split} 
\end{align} 
\end{hypothesis}

Then the meromorphic Fredholm theorem reads as follows:

\begin{theorem} \lb{tA.4} 
Assume that $A:\Omega\backslash \Omega_d \to \cB(\cH)$ satisfies Hypothesis~\ref{hA.3}. Then either \\
$(i)$ $A(z)$ is not boundedly invertible for any $z \in \Omega\backslash \Omega_d$, \\[1mm]
or else, \\[1mm]
$(ii)$ $A(\dott)^{-1}$ is finitely meromorphic on $\Omega$. More precisely, there exists a discrete 
subset $\Omega_{d,0} \subset \Omega$ $($possibly, $\Omega_{d,0} = \emptyset$$)$ such that 
$A(z)^{-1} \in \cB(\cH)$ for all $z \in \Omega\backslash\{\Omega_d \cup \Omega_{d,0}\}$, $A(\dott)^{-1}$ 
extends to an analytic function on $\Omega\backslash \Omega_{d,0}$, meromorphic on $\Omega$. In addition, 
\begin{equation}\lb{A.9}
A(z)^{-1} \in \Phi(\cH) \, \text{ for all } \, z \in \Omega\backslash \Omega_{d,0}, 
\end{equation}
and if $z_1 \in \Omega_{d,0}$ then for some $N_0(z_1) \in \bbN$, and for some $0 < \varepsilon_0(z_1)$ 
sufficiently small, 
\begin{equation}
A(z)^{-1} = \sum_{k= - N_0(z_1)}^{\infty} (z - z_1)^{k} D_k(z_1), \quad 
0 < |z - z_1| < \varepsilon_0(z_1),    \lb{A.10}
\end{equation}
with  
\begin{align} \lb{A.11} 
\begin{split} 
& D_{-k}(z_1) \in \cF(\cH), \;1 \leq k \leq N_0(z_1),  \quad  D_0(z_1) \in \Phi(\cH), \\
& D_k(z_1) \in \cB(\cH), \; k \in \bbN. 
\end{split} 
\end{align}
In addition, if $[I_{\cH} - A(z)] \in \cB_{\infty}(\cH)$ for all $z \in \Omega\backslash\Omega_d$, then 
\begin{equation}
\big[I_{\cH} - A(z)^{-1}\big] \in \cB_{\infty}(\cH), \quad z \in \Omega\backslash\Omega_{d,0}, 
\quad [I_{\cH} - D_0(z_1)] \in \cB_{\infty}(\cH), \quad z_1 \in \Omega_{d,0}.  
\end{equation} 
\end{theorem}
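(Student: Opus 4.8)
The plan is to derive Theorem~\ref{tA.4} from the analytic Fredholm Theorem~\ref{tA.2} by splitting the argument into a global part on $\Omega\setminus\Omega_d$ and a purely local analysis near each point of $\Omega_d$. Since $\Omega_d\subset\Omega$ is discrete and $\Omega$ is open and connected, $\Omega\setminus\Omega_d$ is again open and connected, and on it $A(\dott)$ is analytic with values in $\Phi(\cH)$; thus $A|_{\Omega\setminus\Omega_d}$ satisfies Hypothesis~\ref{hA.1} on $\Omega\setminus\Omega_d$. Applying Theorem~\ref{tA.2} there yields the dichotomy: either $A(z)$ fails to be boundedly invertible for every $z\in\Omega\setminus\Omega_d$ -- this is exactly alternative~$(i)$ -- or there is a discrete set $\Omega_d'\subset\Omega\setminus\Omega_d$ off which $A(\dott)^{-1}\in\cB(\cH)$ exists and is analytic, with $A(\dott)^{-1}$ finitely meromorphic on $\Omega\setminus\Omega_d$ and satisfying \eqref{A.3}, \eqref{A.4} (and the corresponding $\cB_\infty(\cH)$-conclusions when $I_\cH-A(\dott)$ is $\cB_\infty(\cH)$-valued) at each point of $\Omega_d'$. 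In the second case everything except the behaviour of $A(\dott)^{-1}$ near the points of $\Omega_d$ is already in hand; once the local extension across each $z_0\in\Omega_d$ is established, one takes $\Omega_{d,0}$ to be the (necessarily discrete) set of poles of the resulting meromorphic function $A(\dott)^{-1}$ on $\Omega$, and \eqref{A.9} is immediate since boundedly invertible operators are Fredholm of index $0$.

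For the local step, fix $z_0\in\Omega_d$ and shrink $\varepsilon_0$ so that $D(z_0;\varepsilon_0)\setminus\{z_0\}$ avoids $\Omega_d'$ and the remaining points of $\Omega_d$; then $A(\dott)$ is analytic, $\Phi(\cH)$-valued and boundedly invertible on $D(z_0;\varepsilon_0)\setminus\{z_0\}$, with Laurent expansion \eqref{A.7} at $z_0$. The key preliminary observation is that $A_0(z_0)\in\Phi(\cH)$ has Fredholm index $0$: for $z$ near $z_0$ one has $A(z)=A_0(z_0)+[A(z)-A_0(z_0)]$, where $A(z)-A_0(z_0)$ is the finite-rank principal part plus a bounded analytic term vanishing at $z_0$; since $\Phi(\cH)$ is open and $\ind(\dott)$ is invariant under sufficiently small bounded and under arbitrary finite-rank perturbations, $\ind(A(z))=\ind(A_0(z_0))$ for $0<|z-z_0|$ small, and the left-hand side vanishes because $A(z)$ is boundedly invertible. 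Consequently there is a $P\in\cF(\cH)$ with $G:=A_0(z_0)+P$ boundedly invertible. Setting $\wti A(z):=A(z)+P$, which has the same principal part as $A(\dott)$ and satisfies $\wti A_0(z_0)=G$, one writes $\wti A(z)=G\bigl[I_\cH+G^{-1}T(z)\bigr]$ with $T(\dott)$ equal to a finite-rank-valued meromorphic function plus a bounded analytic term vanishing at $z_0$; absorbing the latter (small) piece into an analytic, boundedly invertible factor $E(\dott)$ yields the local factorization $\wti A(z)=G\,E(z)\,[I_\cH+\wti F(z)]$ for $0<|z-z_0|$ small, where $\wti F(\dott)$ is finite-rank-valued, meromorphic at $z_0$, and all of whose Laurent coefficients are finite-rank. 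This factorization is the meromorphic analogue of the Gohberg--Sigal/Smith-type factorization underlying Theorem~\ref{t3.10}, and may alternatively be quoted from the sources collected in Appendix~\ref{sA}.

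To finish, choose a fixed finite-dimensional subspace $\cL\subset\cH$ containing the ranges of all Laurent coefficients of $\wti F(\dott)$ together with the ranges of their adjoints; then, with respect to $\cH=\cL\oplus\cL^{\bot}$, $I_\cH+\wti F(z)=(I_\cL+f(z))\oplus I_{\cL^{\bot}}$ for a finite matrix-valued meromorphic $f(\dott)$, and $\det(I_\cL+f(\dott))\not\equiv0$ since $\wti A(\dott)$, hence $I_\cH+\wti F(\dott)$, is boundedly invertible off a discrete set near $z_0$. By Cramer's rule $(I_\cL+f(\dott))^{-1}$ is matrix-valued meromorphic at $z_0$, whence $[I_\cH+\wti F(\dott)]^{-1}=I_\cH+(\text{finite-rank-valued meromorphic})$ and $\wti A(\dott)^{-1}=[I_\cH+\wti F(\dott)]^{-1}E(\dott)^{-1}G^{-1}$ is finitely meromorphic at $z_0$, with finite-rank principal-part coefficients and zeroth Laurent coefficient a boundedly invertible operator plus a finite-rank one, hence in $\Phi(\cH)$. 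Finally, wherever $\wti A(z)$ is invertible one has $A(z)=\wti A(z)\bigl[I_\cH-\wti A(z)^{-1}P\bigr]$ with $\wti A(\dott)^{-1}P$ finite-rank-valued; the same finite-dimensional inversion gives $A(\dott)^{-1}=\bigl[I_\cH+(\text{finite-rank-valued meromorphic})\bigr]\wti A(\dott)^{-1}$ off a discrete set, and since $A(\dott)^{-1}$ is already known to be analytic on all of $D(z_0;\varepsilon_0)\setminus\{z_0\}$, it must coincide with this finitely meromorphic expression, which delivers \eqref{A.10}, \eqref{A.11} at $z_0$. For the $\cB_\infty$-addendum, on $\Omega\setminus\Omega_{d,0}$ one writes $I_\cH-A(z)^{-1}=-A(z)^{-1}(I_\cH-A(z))\in\cB_\infty(\cH)$ by the ideal property, and at $z_1\in\Omega_{d,0}$ one subtracts the finite-rank principal part of $A(\dott)^{-1}$ and lets $z\to z_1$ along the analytic regular part, using that $\cB_\infty(\cH)$ is norm closed, to obtain $I_\cH-D_0(z_1)\in\cB_\infty(\cH)$. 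The main obstacle is exactly this local step: upgrading the analytic Fredholm theorem across a genuine pole, the two delicate points being the index-zero fact for $A_0(z_0)$ (which makes the finite-rank reduction to bounded invertibility possible) and the bookkeeping in the finite-rank inversions ensuring that all Laurent coefficients of $A(\dott)^{-1}$ come out finite-rank and the zeroth one Fredholm; quoting the Gohberg--Sigal machinery directly bypasses this at the cost of importing that theory.
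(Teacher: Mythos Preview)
The paper does not actually prove Theorem~\ref{tA.4}; it merely states the result and refers to the standard sources (Gohberg--Sigal, Gohberg--Leiterer, Howland, Reed--Simon, Ribaric--Vidav, Steinberg). Your outline is precisely the classical Gohberg--Sigal route found in those references: apply the analytic Fredholm theorem on $\Omega\setminus\Omega_d$, then handle each pole of $A(\dott)$ locally by a finite-rank reduction. So in substance you are reproducing the ``paper's'' proof, in the sense of the literature it cites.

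One technical slip deserves mention. After writing $\wti A(z)=G\,E(z)\,[I_\cH+\wti F(z)]$ with $\wti F(z)=(I_\cH+F_2(z))^{-1}F_1(z)$, you claim that the ranges of all Laurent coefficients of $\wti F(\dott)$, together with those of their adjoints, lie in a single finite-dimensional $\cL$, so that $I_\cH+\wti F(z)=(I_\cL+f(z))\oplus I_{\cL^\bot}$. That block-diagonal form is generally too strong: the factor $(I_\cH+F_2(z))^{-1}$ moves the ranges of $F_1$ around with $z$, so the infinitely many Taylor coefficients of $\wti F$ need not have ranges (or adjoint ranges) contained in a fixed finite-dimensional subspace. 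The fix is immediate: factor $E$ on the other side,
\[
I_\cH+G^{-1}T(z)=\bigl[I_\cH+F_1(z)(I_\cH+F_2(z))^{-1}\bigr](I_\cH+F_2(z)),
\]
so that $\wti F(z)=F_1(z)(I_\cH+F_2(z))^{-1}$ has range in the fixed finite-dimensional span $\cL_1$ of the ranges of $G^{-1}A_k(z_0)$, $-N_0\le k\le -1$. With respect to $\cH=\cL_1\oplus\cL_1^\bot$ one then gets a block upper-triangular form $I_\cH+\wti F(z)=\begin{pmatrix} I_{\cL_1}+a(z) & b(z)\\ 0 & I_{\cL_1^\bot}\end{pmatrix}$, and Cramer's rule applied to $I_{\cL_1}+a(z)$ yields the finitely meromorphic inverse exactly as you intend. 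With this correction the rest of your argument (index-zero for $A_0(z_0)$, the finite-rank perturbation to invertibility, the two successive finite-rank inversions, and the $\cB_\infty$-addendum via the ideal and norm-closure properties) goes through and matches the standard proof.
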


\medskip
\noindent {\bf Acknowledgments.} 
We are indebted to Rupert Frank, Yuri Latushkin, and Alim Sukhtayev for very helpful discussions. 
J.B.\ gratefully acknowledges support for the Distinguished Visiting Austrian Chair at Stanford University by the Europe Center and the
Freeman Spogli Institute for International Studies, where 
this work was completed in the spring of 2020. J.B.\ is also most grateful 
for the stimulating research
stay and the hospitality at the University of Auckland, where some
parts of this paper were written. This work is supported 
by the Marsden Fund Council from Government funding, administered by the Royal Society of New Zealand.
F.G.\ gratefully acknowledges kind invitations to the Institute for Applied Mathematics at the Graz University of 
Technology, Austria, for parts of June 2018 and June 2019. The extraordinary hospitality as well 
as the stimulating atmosphere at the Graz University of Technology is greatly appreciated. 


\end{document}